\newtheorem{thm}{Theorem}[section]
\newtheorem*{thm*}{Theorem}
\newtheorem{prop}[thm]{Proposition}
\newtheorem*{prop*}{Proposition}
\newtheorem{corol}[thm]{Corollary}
\newtheorem*{corol*}{Corollary}
\newtheorem{ex}[thm]{Example}
\newtheorem{lemma}[thm]{Lemma}
\newtheorem{dfn}[thm]{Definition}
\newtheorem*{dfn*}{Definition}
\newtheorem{rmk}[thm]{Remark}
\newtheorem*{notation}{Notation}
\newtheorem{claim}[thm]{Claim}
\numberwithin{equation}{thm}
\DeclareMathOperator{\Fix}{Fix}
\DeclareMathOperator{\hilb}{Hilb}
\DeclareMathOperator{\Bl}{Bl}
\DeclareMathOperator{\Pic}{Pic}
\DeclareMathOperator{\Aut}{Aut}
\DeclareMathOperator{\NS}{NS}
\DeclareMathOperator{\Sing}{Sing}
\DeclareMathOperator{\Span}{Span}
\DeclareMathOperator{\TR}{Tr}
\DeclareMathOperator{\Mon}{Mon}
\DeclareMathOperator{\id}{id}
\DeclareMathOperator{\Sym}{Sym}
\newcommand{\parallelsum}{\mathbin{\!/\mkern-5mu/\!}}
\title[Cubic threefolds and IHS manifolds]{GIT stable cubic threefolds and certain fourfolds of $K3^{[2]}$-type}
\author{Lucas Li Bassi}
\address{Lucas Li Bassi, 	DIMA (Università di Genova), Via Dodecaneso 35, 16146 Genova, Italy}
\email{lucas.libassi@dima.unige.it}
\urladdr{http://https://sites.google.com/view/lucaslibassi/}
\date{}
\begin{document}
\begin{abstract}
We study the behaviour on some nodal hyperplanes of the isomorphism, described by Boissière--Camere--Sarti in \cite{BCS}, between the moduli space of smooth cubic threefolds and the moduli space of hyperkähler fourfolds of $K3^{[2]}$-type with a non-symplectic automorphism of order three, whose invariant lattice has rank one and is generated by a class of square 6; along those hyperplanes the automorphism degenerates by jumping to another family. We generalize their result to singular nodal cubic threefolds having one singularity of type $A_i$ for $i=2, 3, 4$  providing  birational maps between the loci of cubic threefolds where a generic element has an isolated singularity of the types $A_i$ and some moduli spaces of hyperkähler fourfolds of $K3^{[2]}$-type with non-symplectic automorphism of order three belonging to different families. In order to treat the $A_2$ case, we introduce the notion of Kähler cone sections of $K$-type generalizing the definition of $K$-general polarized hyperkähler manifolds.
\end{abstract}
\maketitle
\textbf{Keywords}: IHS manifolds, automorphisms, moduli spaces, cubic hypersurfaces, degenerations.\newline
\textbf{MSC classification}: 14J17 , 14J30 , 14J42 , 14J50.

\section{Introduction}\label{sec1}

The relation between cubic hypersurfaces and irreducible holomorphic symplectic manifolds has attracted the interest of many experts during the last decades. Various examples of this interest can be found in literature. The first one is the classical result, due to Beauville and Donagi \cite{BD}, stating that the Fano variety of lines on a cubic fourfold is deformation equivalent to the Hilbert square of a $K3$ surface. There are plenty of other examples, to cite a few \cite{hassett_2000}, \cite{LehnLehnSorgervanStraten}, \cite{LSV} and the article of Boissière--Camere--Sarti \cite{BCS} which is at the core of this article.\\
 
 In \textit{loc.cit.} the authors prove the existence of an isomorphism $\psi$ between the moduli space $\mathcal{C}^{sm}_3$ of smooth cubic threefolds and the moduli space $\mathcal{N}^{\rho, \zeta}_{\langle 6\rangle}$ of fourfolds of $K3^{[2]}$-type endowed with a special non-symplectic automorphism of order three. Moreover, they analyze the extension of the period map to singular cubics, given in \cite{ACT}, in order to give a geometric interpretation of the degenerations of the automorphism $\psi$ along either the chordal or the singular nodal hyperplanes, where the cubic threefolds either acquire a nodal singularity or they are related to the chordal cubic. In particular they find a birational morphism between the stable discriminant locus (corresponding to a generic nodal degeneration) $\Delta_3^{A_1}$ and the 9-dimensional moduli space of fourfolds  of $K3^{[2]}$-type endowed with a non-symplectic automorphism	of order three, having invariant lattice isometric to $U(3)\oplus\langle -2\rangle$. In the exceptional locus of this birational morphism there are some interesting subloci, e.g. cubic threefolds having an isolated singularity of type $A_i$ for $i=2, 3, 4$.\\
 
 The aim of this article is to provide a similar result also for the closed subloci $\Delta_3^{A_2}$, $\Delta_3^{A_3}$, $\Delta_3^{A_4}\subset \Delta_3^{A_1}$ where $\Delta_3^{A_i}$ is the closure of the set of cubic threefolds having an isolated singularity of type $A_i$ for $i=1, \dots,4$ taken in the moduli space of cubic threefolds. These threefolds are of our interest because Allcock proved in \cite[Theorem 1.1]{AllcockSing} that a singular cubic threefold is GIT stable if and only if all its singularities are of type $A_i$ for $i=2, 3, 4$. Therefore, they are in the strata at the boundary of the GIT compactification of the moduli space of smooth cubic threefolds. Different types of compactifications have been studied recently in many articles, e.g. \cite{Yokoyama}, \cite{LooijengaSwiestra}, \cite{C-MGHL_Jacobians}, \cite{C-MGHL_cohomology} and the already cited \cite{AllcockSing} and \cite{ACT}.\\

 The main results of this article can be summarized as follows. 
	\begin{thm}\label{thm:main}
		The $\Delta_3^{A_i}$ locus for $i=1, \dots, 4$ is birational to a $(10-i)$-dimensional moduli space of fourfolds of $K3^{[2]}$-type with Picard group of the generic member isometric to $R_i$ endowed with a non-symplectic automorphism of order three, having invariant lattice isometric to $T_i$. These lattices are defined in the following table. \begin{center}
			\begin{tabular}{|c|c|c|} 
                \hline
				$i$ & $T_i$ & $R_i$\\ 
				\hline
				$1$ & $U(3)\oplus\langle -2\rangle$ & $U(3)\oplus\langle -2\rangle$\\ 
				\hline
				$2$ & $U\oplus A_2(-2)\oplus\langle -2\rangle$ & $U(3)\oplus\langle -2\rangle$\\ 
				\hline
				$3$ &  $U\oplus A_2(-1)^{\oplus 2}\oplus\langle -2\rangle$ & $U\oplus A_2(-1)^{\oplus 2}\oplus\langle -2\rangle$\\ 
				\hline
				$4$ & $U\oplus E_6(-1)\oplus\langle -2\rangle$ & $U\oplus E_6(-1)\oplus\langle -2\rangle$\\ 
				\hline
			\end{tabular}
		\end{center}
	\end{thm}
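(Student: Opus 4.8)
The plan is to identify both $\Delta_3^{A_i}$ and the moduli space $\mathcal N_{T_i}$ of $K3^{[2]}$-type fourfolds carrying a non-symplectic order-three automorphism with invariant lattice isometric to $T_i$ with (Zariski-open subsets of) one and the same complex ball quotient, and to make this identification geometric through an explicit construction $C\mapsto(\hat\Sigma^{[2]},\tau^{[2]})$, following the blueprint of \cite[Section~4]{BCS} and the extension of the period map of \cite{ACT}.

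\textbf{Step 1 (the associated fourfold).} Given a generic $C\in\Delta_3^{A_i}$, I would pass to the cyclic triple cover $Y\subset\mathbb P^5$ of $\mathbb P^4$ branched along $C$: a cubic fourfold carrying the order-three automorphism $\sigma$ (acting by $\zeta_3$ on the covering coordinate) and acquiring, over the singular point of $C$, an isolated ADE singularity whose type is governed by $i$. Projecting $Y$ from that point realises the relevant resolution as a blow-up of $\mathbb P^4$ along a degree-$6$ K3 surface; after passing to its minimal resolution one obtains $\hat\Sigma=\hat\Sigma(C)$, the K3 surface underlying the period of $C$ in the sense of \cite{ACT} (a specialisation of the surface attached there to a generic nodal cubic), and $\sigma$ descends to a non-symplectic automorphism $\tau$ of order three on $\hat\Sigma$. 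I would then take $\hat\Sigma^{[2]}$ with the marking induced from $H^2(\hat\Sigma,\mathbb Z)\oplus\mathbb Z\delta$ (with $\delta^2=-2$ and $2\delta$ the exceptional divisor) and the automorphism $\tau^{[2]}$ — equivalently, the automorphism transported from the Fano variety $F(Y)$ through the birational identification of $F(Y)$ with $\hat\Sigma^{[2]}$ — and, by comparing eigenspace decompositions, check that the eigenperiod of $(\hat\Sigma^{[2]},\tau^{[2]})$ equals the eigenperiod of $C$ from \cite{ACT}. Sections~\ref{Sec:Nodal} and \ref{sec:moduli} provide the tools here.

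\textbf{Step 2 (the invariant lattice and the ball).} Next I would compute the invariant lattice of $\tau^{[2]}$. Since $\tau$ is non-symplectic, its coinvariant lattice in $H^2(\hat\Sigma,\mathbb Z)$ has no invariant vectors, so the invariant lattice of $\tau^{[2]}$ is $H^2(\hat\Sigma,\mathbb Z)^{\tau}\oplus\mathbb Z\delta$; the first summand is read off from the local model of the singularity of $Y$ — the resolution divisors contribute a root sublattice whose $\tau$-fixed part yields the $A_2$- and $E_6$-flavoured summands of the table, while the polarisation class of square $6$ together with the fixed-curve classes supplies the remaining hyperbolic part — and one checks that the total lattice is $T_i$, of signature $(1,\operatorname{rk}T_i-1)$. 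The orthogonal complement $T_i^{\perp}$ then carries an induced $\mathbb Z[\zeta_3]$-structure whose eigenperiod domain is a complex ball of dimension $10-i$, and Section~\ref{sec:ComplexReflection} (Springer theory applied to the relevant monodromy group) is used to match the arithmetic group acting on it with the one occurring in the ACT description of $\Delta_3^{A_i}$, so that the two ball quotients are identified.

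\textbf{Step 3 (birationality, and the $A_2$ obstacle).} Finally, surjectivity of the eigenperiod map onto the ball quotient together with the dimension count shows $C\mapsto(\hat\Sigma^{[2]},\tau^{[2]})$ is dominant, while generic injectivity follows since one recovers the ACT period, hence $C$, from the fourfold. To conclude it is a birational map onto $\mathcal N_{T_i}$ I would use the Torelli theorem for IHS manifolds, which identifies any $T_i$-polarised fourfold with the same eigenperiod with $\hat\Sigma(C)^{[2]}$ \emph{up to birational equivalence}. Upgrading ``birational'' to ``isomorphic'' — i.e.\ showing that the generic member of $\mathcal N_{T_i}$ is $K(T_i)$-general in the sense of Definition~\ref{def:K-gen} — I expect to be routine for $i=1,3,4$ via Camere's criterion \cite{Camere_lattice_polarized_rmks}, by checking that no wall of a $T_i$-polarised fourfold meets its positive cone. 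The genuine obstacle is $i=2$: there $K(T_2)$-generality fails, so $\hat\Sigma^{[2]}$ need not be isomorphic to the generic $T_2$-polarised fourfold. This is exactly what the notion of Kähler cone sections of $K$-type is designed to handle — one shows that the induced automorphism together with the marked Kähler class pins down a canonical birational model, so that $\Delta_3^{A_2}$ is still birational to $\mathcal N_{T_2}$. Carrying out Steps~1–3 for each $i$ yields the four birational equivalences in the table.
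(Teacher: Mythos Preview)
Your overall strategy matches the paper's for $i=1,3,4$. The genuine gap is in the $A_2$ case, and it stems from a mis-identification in Step~2.

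You claim the invariant lattice of $\tau^{[2]}$ equals $T_i$ in each case, reading it off as $H^2(\hat\Sigma,\mathbb Z)^{\tau}\oplus\mathbb Z\delta$ with the resolution divisors contributing the root summands. For $i=1,3,4$ this is correct, since there $H^2(\hat\Sigma,\mathbb Z)^{\tau}=\Pic(\hat\Sigma)$. For $i=2$ it is false: the surface $\Sigma$ has three $A_1$ points which $\tau$ \emph{permutes cyclically} (see Section~\ref{sec: A2}), so the three exceptional curves are not $\tau$-invariant and contribute nothing to the fixed lattice. One finds $H^2(\hat\Sigma,\mathbb Z)^{\tau}\simeq U(3)$, hence the invariant lattice of $\tau^{[2]}$ is $U(3)\oplus\langle-2\rangle=T_1$, \emph{not} $T_2$. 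The lattice $T_2=U\oplus A_2(-2)\oplus\langle-2\rangle$ is $\Pic(\hat\Sigma^{[2]})$, which is strictly larger.

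Consequently the $A_2$ obstacle is not the one you describe. The paper first attempts, as for the other $i$, to enlarge the automorphism to one with invariant lattice equal to $\Pic(\hat\Sigma^{[2]})\simeq\langle6\rangle\oplus D_4(-1)$ by lifting $\id_{T_0}\oplus\rho_{|S_0}$ to $O(L)$ via Proposition~\ref{teonik}; this \emph{fails} because the unique order-three fixed-point-free isometry of $D_4$ acts non-trivially on the discriminant --- and it is precisely here that Springer theory (Section~\ref{sec:ComplexReflection}) enters, to establish uniqueness up to conjugacy of that isometry on $D_4$ and $E_6$, not to match arithmetic groups as you suggest. One is then forced to keep the automorphism with invariant lattice $T_{\delta_1}=T_1$, and it is $K(T_1)$-generality (not $K(T_2)$-generality) that fails (Proposition~\ref{prop:K-gen}). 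The K\"ahler-cone-section machinery of Section~\ref{sec:conesections} then encodes \emph{simultaneously} the $(\rho_{\delta_1},T_1)$-polarisation carrying the automorphism and an $(M,j)$-polarisation with $M=T_2$ carrying the generic Picard lattice; the resulting moduli space $\mathscr{N}_{K,T_{\delta_1}}^{\rho_{\delta_1},\zeta}$ is $8$-dimensional because $T_2$, not the invariant lattice $T_1$, determines the period domain. Your Step~3 does not capture this two-lattice structure.
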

 
  In order to prove our results we will proceed in the following way.
  
  In \Cref{sec:moduli} and \Cref{sec:cubiche} we will set the notation and the background needed by introducing respectively basic facts about irreducible holomorphic symplectic manifolds (that will be the main object of this paper) and nodal cubics. Then, as noted in \cite[Section 4]{BCS}, in order to understand geometrically the degenerations of the automorphism along the nodal hyperplanes, one has to consider a moduli space of fourfolds of $K3^{[2]}$-type with an automorphism having an invariant lattice which is bigger than in the smooth case. So, we will start from a generic cubic threefold $C$ in $\Delta_3^{A_i}$, then we will find a $K3$ surface $\hat{\Sigma}$ having the same period of $C$. The natural choice for $\hat{\Sigma}$ will be the one used in \cite{ACT} to define the period of a nodal cubic. Indeed, as explained in \Cref{sec:cubiche} a cubic threefold having one isolated singularity can be seen as the zero locus of an homogeneous polynomial of the form
  \begin{equation*}
		f(x_0:x_1:x_2:x_3:x_4)=x_0f_2(x_1,x_2,x_3,x_4)+f_3(x_1,x_2,x_3,x_4)=0
	\end{equation*} where the $f_i$ are homogeneous polynomials of degree $i$. Then, the surface $\Sigma$ given by the following intersection: \begin{equation*}
		\begin{cases}
			f_2(x_1,x_2,x_3,x_4)=0 \\
			f_3(x_1,x_2,x_3,x_4)+x_5^3=0.
		\end{cases}
	\end{equation*} is the $K3$ surface with isolated singularities which we need to resolve. \newline
    
    After this, we will find some conditions on the Hilbert square $\hat{\Sigma}^{[2]}$ of $\hat{\Sigma}$ such that $\hat{\Sigma}^{[2]}$ is a ``good candidate'' for the relation we are looking for. Indeed, our aim is to define a birational map between $\Delta_3^{A_i}$ and some moduli space of fourfolds of $K3^{[2]}$-type. In order to do so we will define an isomorphism between generic elements via the period map. Therefore a  ``good candidate'' should be endowed with a marking, a non-symplectic automorphism and generic in a particular moduli space. Moreover, we want that the restriction of the period map to this moduli space is an isomorphism onto its image. Indeed, in general the period map for IHS manifolds is not an isomorphism because there may exist birational, non-isomorphic models in the fiber over a period. In order to ensure that this does not happen and avoid problems of separability we will use, in a first instance, the notion of $K(T)$-generality for a fourfold of $K3^{[2]}$-type, introduced by Camere in \cite[Definition 3.10]{Camere_lattice_polarized_rmks} recalled in \Cref{def:K-gen}.\newline

	In \Cref{DegSec} we will find a sufficient condition on the Picard group of $\hat{\Sigma}^{[2]}$ to make it a ``good candidate'', then we will use this result in \Cref{sec:A1} and \Cref{sec:A3A4} to prove \Cref{thm:main} for $i=1$, $3$ and $4$. For these cases the result is proven, respectively, in \Cref{prop:A1}, \Cref{prop:A3} and \Cref{prop:A4}. Note that for $i=1$ this coincides with the result stated in \cite[Proposition 4.6]{BCS}.\newline 
    
    The case $i=2$, where we start from a generic cubic in $\Delta_3^{A_2}$, behaves very differently respect to the other cases. This fact is somehow surprising at a first glance but studying it we will find out many differences with the other cases; for example, in order to find $\hat{\Sigma}$ in this case we need to blow-up three points which are permuted by a non-symplectic automorphism of order 3 instead of one fixed point as for $i=3,4$. Moreover, in the case with $i=2$ we do not have the $K(T)$-generality property, thus, there are multiple birational non-isomorphic models. Therefore, in \Cref{sec: A2} we will discover that its associated $\hat{\Sigma}^{[2]}$ does not satisfy the sufficient condition in order to be a ``good candidate'' found in \Cref{DegSec}. So, in order to deal with the $\Delta_3^{A_2}$ locus we will need to introduce in \Cref{sec:conesections} the notion of Kähler cone sections of $K$-type. This is a technical condition which generalizes the notion of $K(T)$-generality and requires some work to do. It is particularly useful when one considers a subfamily polarized by a lattice bigger than the invariant lattice in the moduli space of IHS manifolds of a given type admitting a non-symplectic automorphism with a given action in cohomology. Finally, we will show in \Cref{sec:A2works} that the notion just introduced leads us to the proof of \Cref{thm:main} for $i=2$. 

 \section{Irreducible Holomorphic Symplectic Manifols}\label{sec:moduli}
	In this section we recall briefly a few facts about irreducible holomorphic symplectic manifolds. For any integer lattice $L$ we will denote by $L_{\mathbb{C}}:=L\otimes\mathbb{C}$ its $\mathbb{C}$-linear extension. \newline
	
	\begin{dfn}
		An irreducible holomorphic symplectic (from now on IHS) manifold $X$ is a compact complex Kähler manifold which is simply connected and such that there exists an everywhere non-degenerate holomorphic 2-form $\omega_X$ such that $H^0(X, \Omega_X^2)=\mathbb{C}\omega_X$.
	\end{dfn}
 The theory of IHS manifolds has been deeply studied over the last forty years. The first examples of IHS manifolds in higher dimensions were provided by Fujiki in dimension 4, and then extended to all even dimensions by Beauville \cite{Beauville1983}. Our interest will be predominantly in one family of deformations constructed as follows. Let $\Sigma$ be a $K3$ surface; for our purposes, we can restrict to the case where $\Sigma$ is projective.
\begin{dfn}
    We define the \emph{Hilbert scheme of $n$ points on $\Sigma$} as the variety that parameterizes zero-dimensional subschemes $(Z, \mathcal{O}_Z)$ of length $n$ (i.e., $\dim \mathcal{O}_Z = n$) on the surface $\Sigma$, denoted by $\Sigma^{[n]}$. Oftentimes we will denote it also by $\hilb^n(\Sigma)$. We will refer to the Hilbert scheme $\Sigma^{[2]}$ of two points on $\Sigma$ also as the \emph{Hilbert square of $\Sigma$}.
\end{dfn}
This definition sometimes can bring, from our point of view, to a lack of geometric meaning, in the sense that it is not so clear how to generalize facts about $K3$ surfaces to Hilbert schemes of points on them. In order to deal with this fact we can give another equivalent definition.

In line with \cite[Section 6]{Beauville1983}, we use the following notations:

\begin{itemize}
\item $\Sigma^{(n)}$ stands for the variety of 0-cycles of degree $n$, defined as the quotient of $\Sigma^n:=\overbrace{\Sigma\times\dots\times\Sigma}^{n\rm\ times}$ by the symmetric group on $n$ elements. We will refer to it also with $\Sym^n(\Sigma)$.
\item We label the natural mapping associating each finite scheme with the corresponding 0-cycle (termed the \emph{Hilbert-Chow morphism}) as $\epsilon: \Sigma^{[n]}\to  \Sigma^{(n)}$.
\item We denote the locus of cycles in the form $p_1+\ldots +p_n$ such that there exists $i\neq j$ with $p_i=p_j$, also called \emph{diagonal}, as $D\subset \Sigma^{(n)}$.
\end{itemize}

\begin{dfn}[Alternative definition]
     Consider on $\Sigma^n$ the action $\gamma$ of the symmetric group $S_n$ on $\mathbb{Z}/n\mathbb{Z}$ which permutes the factors. Then the quotient $\Sigma^{(n)}=(\Sigma^n)/S_n$ is singular on the diagonal. The blow-up of $\Sigma^{(n)}$ along the diagonal is the Hilbert scheme $\Sigma^{[n]}$ of $n$ points on $\Sigma$ and the blow-up morphism is identified with the \emph{Hilbert--Chow} morphism.
\end{dfn} 
The other deformation types known at the state of art are those of the generalized Kummer manifolds, also due to Beauville \cite{Beauville1983}, plus other two examples constructed by O'Grady in dimension 6 \cite{OGrady6} and 10 \cite{OGrady10}.

 The second cohomology group $H^2(X,\mathbb{Z})$ of an IHS manifold $X$ is torsion-free and it is equipped with a nondegenerate symmetric bilinear form (known as Beauville--Bogomolov--Fujiki form), which gives it the structure of an integral lattice (see \cite{Fujiki19870}). This lattice is a deformation invariant and in literature there can be found an explicit description for each example (see e.g. \cite{debarreHKbibbia}), therefore we will say that an IHS manifold $X$ is of type $L$ if $H^2(X,\mathbb{Z})\simeq L$ implying that the deformation type is fixed.
	\begin{rmk}
		There exists no general proof of the fact that if $H^2(X,\mathbb{Z})\simeq H^2(Y,\mathbb{Z})\simeq L$, for $X$, $Y$ IHS manifolds and $L$ a lattice, then $X\sim_{\text{def}}Y$. Nevertheless, this fact is true for the known deformation families.
	\end{rmk}

 Consider an IHS manifold $X$ of type $L$. 

\begin{dfn}
A \emph{marking} on $X$ is an isometry $\eta:H^2(X,\mathbb{Z})\to L$.
A \emph{marked IHS manifold} is a pair $(X,\eta)$, where $X$ is an IHS manifold  together with a marking $\eta:H^2(X,\mathbb{Z})\to L$ on $X$.
Two marked IHS manifolds $(X_1,\eta_1)$ and $(X_2,\eta_2)$ are isomorphic if there exists an isomorphism $f:X_1\to X_2$ such that $\eta_2=\eta_1\circ f^*$.
\end{dfn}
We define the coarse moduli space of marked IHS manifolds of type $L$ as the set of marked manifolds $(X,\eta)$, modulo the equivalence relation given by isomorphism of marked manifold. We denote it with $\mathcal{M}_{L}$.

 On this space one can define a \emph{period map} 
	\begin{align*}
		\mathcal{M}_L & \rightarrow \Omega_L \\
		(X, \eta) & \mapsto \eta(H^{2,0}(X))
	\end{align*}
	where $\Omega_L:=\left\{\omega\in\mathbb{P}(L_{\mathbb{C}})\mid (\omega,\omega)=0, \ (\omega, \bar{\omega})>0\right\}$ is called \emph{period domain}. This map is a local homeomorphism \cite[Theorem 5]{Beauville1983} and surjective when restricted to any connected component $\mathcal{M}_L^{\circ}\subset\mathcal{M}_L$ \cite[Theorem 8.1]{Huybrechts2003CompactHM}. Finally, in $O(H^2(X, \mathbb{Z}))$ there exists an important subgroup $\Mon^2(X)$ called \emph{monodromy group} consisting of the parallel transport operators of $H^2(X,\mathbb{Z})$ to itself (see \cite[Definition 1.1]{Markman_surveyTorelli} for the details). This group is important due to the Hodge theoretic Torelli theorem.
	\begin{thm}[Hodge theoretic Torelli Theorem \protect{\cite[Theorem~1.3]{Markman_surveyTorelli}}]\label{thm: hodge theoretic}
		Let $X$ and $Y$ be IHS manifolds of the same deformation type. Then: 
		\begin{itemize}
			\item $X$ and $Y$ are bimeromorphic, if and only if there exists a parallel transport operator $f:H^2(X,\mathbb{Z})\to H^2(Y,\mathbb{Z})$ which is an isomorphism of integral Hodge structures.
			\item Let $f:H^2(X,\mathbb{Z})\to H^2(Y,\mathbb{Z})$ be a parallel transport operator, which is an isomorphism	of integral Hodge structures. There exists an isomorphism $\tilde{f}: X\to Y$ inducing $f$ if and only if $f$ maps some Kähler class on $X$ to a Kähler class on $Y$.
		\end{itemize}
	\end{thm} 
	We define the \emph{positive cone} of an IHS manifold $X$ as the connected component $\mathcal{C}_X$ of the set $\left\{x\in H^{1,1}(X, \mathbb{R})\mid x^2>0\right\}$ containing a Kähler class and its Kähler cone $\mathcal{K}_X\subset H^{1,1}(X, \mathbb{R})$ as the cone consisting of Kähler classes.
	\begin{dfn}[\protect{\cite[Theorem~6.2]{amerikverbitsky2014}}, \protect{\cite[Proposition 1.5]{Mongardi_notes}}]
		A monodromy birationally minimal (MBM) class is a rational class $\delta\in H^{1,1}(X)\cap H^2(X,\mathbb{Q})$ of negative square such that there exists a bimeromorphic map ${f: X\dashrightarrow Y}$ and a monodromy operator $h\in \text{Mon}^2(X)$ such that the hyperplane $\delta^{\perp}\subset H^{1,1}(X)\cap H^2(X,\mathbb{R})$ contains a face of $h(f^*(\mathcal{K}_Y))$. We denote with $\Delta(X)$ the set of integral MBM classes which are also called \emph{wall divisors}.
	\end{dfn}
	These classes are important as they give a wall and chamber structure to the positive cone and moreover the following theorem holds.
	\begin{thm}[\protect{\cite[Theorem~6.2]{amerikverbitsky2014}}]\label{thm: AmerikVerbitsky}
		Given an IHS manifold $X$ then its Kähler cone $\mathcal{K}_X$ is a connected component of $\mathcal{C}_X\setminus \mathcal{H}_{\Delta}$ with $\mathcal{H}_{\Delta}$ defined as:
		\begin{equation*}
			\mathcal{H}_{\Delta}:=\bigcup_{\delta\in\Delta(X)}\delta^{\perp}\subset H^{1,1}(X)\cap H^2(X,\mathbb{R}).
		\end{equation*}
	\end{thm}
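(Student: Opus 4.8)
The plan is to reduce the statement to two assertions and prove each. Because $\mathcal{K}_X$ is a nonempty open convex subcone of $\mathcal{C}_X$, it will be one of the connected components of $\mathcal{C}_X\setminus\mathcal{H}_{\Delta}$ as soon as we establish: \textbf{(A)} $\mathcal{K}_X$ is disjoint from every hyperplane $\delta^{\perp}$ with $\delta\in\Delta(X)$; and \textbf{(B)} every supporting hyperplane of $\mathcal{K}_X$ at a boundary point of $\mathcal{K}_X$ lying in $\mathcal{C}_X$ is of the form $\delta^{\perp}$ with $\delta\in\Delta(X)$. Indeed, (A) says $\mathcal{K}_X\subseteq\mathcal{C}_X\setminus\mathcal{H}_{\Delta}$, and (B) says that the boundary of $\mathcal{K}_X$ taken inside $\mathcal{C}_X$ is contained in $\mathcal{H}_{\Delta}$; hence $\mathcal{K}_X$ is open, connected and relatively closed in $\mathcal{C}_X\setminus\mathcal{H}_{\Delta}$, so it is a connected component.

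For \textbf{(B)} I would use the Boucksom--Huybrechts structure theory of the Kähler cone. Since the Beauville--Bogomolov form is Lorentzian on $H^{1,1}(X,\mathbb{R})$, a supporting hyperplane of $\mathcal{K}_X$ at a boundary point lying in $\mathcal{C}_X$ has the form $z^{\perp}$ for some $z\in H^{1,1}(X,\mathbb{Z})$ of negative square; and by the description of the Kähler cone such a $z$ is, up to sign and a positive scalar, either the class of a prime exceptional divisor contracted by a divisorial contraction from $X$ or the class of a rational curve flopped on $X$. In the definition recalled in the text take both the bimeromorphic map and the monodromy operator to be the identity; then $z^{\perp}$ contains a face of $\mathcal{K}_X$, so $z\in\Delta(X)$. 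This proves (B).

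For \textbf{(A)}, which is the substantive part, I would argue by deformation. Fix $\delta\in\Delta(X)$, so $\delta^{2}<0$. Using surjectivity of the period map onto a connected component of $\mathcal{M}_L$ and a Baire-category argument, deform the marked pair to a very general one $(X_{0},\eta)$ for which $\delta$ remains of type $(1,1)$ while $\NS(X_{0})=\mathbb{Z}\delta$. On $X_{0}$ the only hyperplane of $\mathcal{C}_{X_{0}}$ that can bound a Kähler chamber is $\delta^{\perp}$, so in this rank-one situation one reads off directly the Kähler cones of $X_{0}$ and of its birational models: either $\delta^{\perp}$ genuinely bounds one of these cones, or $\mathcal{K}_{X_{0}}=\mathcal{C}_{X_{0}}$ and neither $X_{0}$ nor any of its birational models contracts a curve or a divisor. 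The decisive input is that being an MBM class is invariant under deformations keeping the class of type $(1,1)$: rational curves and prime exceptional divisors deform in families, and the birational contractions they induce, together with the monodromy operators realising them, propagate, the bookkeeping being done via the global Torelli theorem \cite{Markman_surveyTorelli}. Hence $\delta$, being MBM on $X$, stays MBM, in particular wall-supporting, on $X_{0}$, which excludes the second alternative; generising back to $X$ the wall $\delta^{\perp}$ persists, so it cannot cross the interior of $\mathcal{K}_X$. This proves (A).

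The main obstacle I expect is precisely the deformation invariance of MBM classes used in (A): one must understand how rational curves, and the birational contractions they define, vary in families of IHS manifolds, and verify that the associated wall survives both specialisation and generisation. This is the technical heart of the argument and rests on the behaviour of the minimal model program in families together with the global Torelli theorem, which is what keeps the monodromy group $\Mon^{2}(X)$ under control throughout.
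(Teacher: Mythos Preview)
The paper does not prove this theorem: it is quoted verbatim as \cite[Theorem~6.2]{amerikverbitsky2014} and used as a black box, with no argument supplied. So there is no ``paper's own proof'' to compare your proposal against.

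That said, your outline is a reasonable sketch of the actual Amerik--Verbitsky argument. The reduction to (A) and (B) is correct, and (B) is indeed the easier half, following from the Boucksom--Huybrechts description of the K\"ahler cone in terms of rational curves. Your handling of (A) correctly identifies the crux: deformation invariance of MBM classes (this is their Theorem~2.15/Corollary~5.13). One point to be more careful about: in (B) you say the class $z$ is ``up to sign and a positive scalar'' the class of a contracted divisor or flopped curve, but the definition of MBM in the paper is for \emph{rational} classes, so you should check that the wall is genuinely of the form $\delta^{\perp}$ for an integral $\delta\in\Delta(X)$ rather than merely a rational one; this is harmless since $\delta^{\perp}$ only depends on the line $\mathbb{Q}\delta$. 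Also, in (A) your phrase ``generising back to $X$ the wall $\delta^{\perp}$ persists'' is doing real work and is exactly where the deformation-invariance theorem is invoked in both directions; you have flagged this honestly as the main obstacle.
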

    \begin{ex}[Numerical characterization in the $K3^{[2]}$-case]
			We point out that by \cite[Theorem 22]{HTmoving} and \cite[Theorem 1.2]{Markman_primeexceptional} there exists a numerical characterization of the elements in $\Delta(X)$ in the $K3^{[2]}$-type case. An effective class $\delta$ is a wall divisor on an IHS fourfold of $K3^{[2]}$-type $X$ if and only if satisfies one of the following:
			\begin{itemize}
				\item $(\delta, \delta)=-2$
				\item $(\delta, \delta)=-10$ and it has divisibility 2, i.e. $(\delta, H^2(X,\mathbb{Z}))\in 2\mathbb{Z}$.
			\end{itemize}
			Moreover, we recall that by \cite[Proposition 1.5]{Markman_primeexceptional} an effective class $\delta$ is monodromy reflective, i.e. the reflection by $\delta$ is an integral monodromy operator, if and only if $(\delta, \delta)=-2$. 
\end{ex}
	Moreover there exists another important wall and chamber structure given by the subset $\mathcal{B}\Delta(X)\subset\Delta(X)$ of the \emph{stably prime exceptional divisors} (see \cite[Section 6]{Markman_surveyTorelli}). Here the \emph{fundamental exceptional chamber} $\mathcal{FE}_X$ is the connected component of 
	\begin{equation*}
		\mathcal{C}_X\setminus\bigcup_{\delta\in\mathcal{B}\Delta(X)}\delta^{\perp}
	\end{equation*}
	containing the Kähler cone and by \cite[Proposition 5.6]{Markman_surveyTorelli} the birational Kähler cone $\mathcal{BK}_X$ satisfies the following $\mathcal{BK}_X\subset\mathcal{FE}_X\subset\overline{\mathcal{BK}}_X$. Given a marking $\eta$ and fixing a connected component in $\mathcal{M}^{\circ}_L\subset\mathcal{M}_L$ we can translate these definitions to the lattice, e.g. $\Delta(L)$ will be the set consisting of elements $\eta(\delta)$ with $\delta$ a wall divisor for $(X,\eta)\in\mathcal{M}^{\circ}_L$ and analogously for $\mathcal{B}\Delta(L)$. Then we define $\mathcal{C}_L:=\left\{x\in L\otimes\mathbb{R}\mid (x,x)>0\right\}$ and, given again $(X,\eta)\in\mathcal{M}^{\circ}_L$, the monodromy group of $L$ is $\Mon^2(L):=\eta\circ\Mon^2(X)\circ\eta^{-1}$. \newline Now we continue our review talking about automorphisms on an IHS manifold $X$. A natural way to characterize them is to look at their action on the symplectic form $\omega_X$, in fact an automorphism $\sigma$ is said \emph{symplectic} if its action is trivial on $\omega_X$, i.e. $\sigma^*_{\mathbb{C}}(\omega_X)=\omega_X$, and non-symplectic otherwise. If no non-trivial power of a non-symplectic automorphism is symplectic then the automorphism is called \emph{purely non-symplectic}; this is always the case when the order is prime. In this last case one can check with a simple computation that if an element is in the invariant lattice $H^2(X, \mathbb{Z})^{\sigma^*}$ then it is orthogonal to the symplectic form $\omega_X$. Therefore, $H^2(X, \mathbb{Z})^{\sigma^*}$ is contained in the Néron--Severi lattice $\NS(X)$. Moreover, by \cite[Proposition 6]{Beauvilleremarks}, an IHS manifold admitting a non-symplectic automorphism is always projective. 
	\subsection{Moduli Spaces and period maps}
	In this section we review the construction of the moduli spaces of lattice polarized IHS manifolds. These notions were first given and analyzed in \cite{BCS_complex} for the $K3^{[n]}$-type deformation, then \cite{brandhorstCattaneo} generalized their result to the other deformation families. \newline
    
	Given an automorphism $\rho$ of a lattice $L$ and an embedding $j: T\hookrightarrow L$ of the invariant lattice $T\simeq L^{\rho}$ we give the following definition.
	\begin{dfn}\label{def: rho_j polarizzazione}
		A $\left(\rho, j \right)$-polarization of an IHS manifold $X$ of type $L$ consists of the following data:
		\begin{enumerate}[i)]
			\item a marking $\eta$;
			\item a primitive embedding $\iota\colon T\hookrightarrow \Pic(X)$ such that $\eta\circ\iota= j$;
			\item an automorphism $\sigma\in\Aut(X)$ such that $\sigma^*_{|H^{2,0}(X)}=\zeta\cdot\text{id}$ (with $\zeta$ a primitive n-th root of unity) and $\eta$ is a framing for $\sigma$, i.e. the following diagram commutes
			\begin{equation*}
				\begin{tikzcd}
					H^2(X,\mathbb{Z})\ar[r, "\sigma^*"] \ar[d, "\eta"] & H^2(X,\mathbb{Z}) \ar[d, "\eta"] \\
					L\ar[r, "\rho"] & L
				\end{tikzcd}
			\end{equation*}
		\end{enumerate}
	\end{dfn} 
	
	The period domain is in this case (see \cite[Section 3.2]{brandhorstCattaneo})
	\begin{equation*}
		\Omega^{\rho,\zeta}_{T}:=\left\{x\in\mathbb{P}(S_\zeta)\mid h_{S}(x,x)>0\right\},
	\end{equation*}
	where we denoted with $S$ the orthogonal complement of $T$ in $L$ and with $S_\zeta$ the eigenspace relative to $\zeta$ inside $S_{\mathbb{C}}$. Given a sublattice $S\subset L$ we denote as $\Delta(S):=\Delta(L)\cap S$. Then, the period map on a connected component of the moduli space of $\left(\rho, j \right)$-polarized IHS manifolds $X$ of type $L$ is surjective on
	\begin{equation*}
		\Omega^{\rho,\zeta}_{T}\setminus\bigcup_{\delta\in\Delta(S)}(\delta^{\perp}\cap\Omega^{\rho,\zeta}_{T})
	\end{equation*}
	by \cite[Proposition 3.12]{brandhorstCattaneo}, but in order to give a bijective restriction we need to introduce another definition. Choose $K(T)$ as a connected component of $$\mathcal{C}_T\setminus\bigcup_{\delta\in\Delta(S)}\delta^{\perp}\subset T_\mathbb{R}.$$
	\begin{dfn}
		A $(\rho, j)$-polarized manifold $(X, \eta)$ is $K(T)$-general if the cone generated by the invariant Kähler classes $\mathcal{K}_X^{\sigma^*}$ is identified with $K(T)$ under the polarization $\eta$, i.e. $\eta(\mathcal{K}_X^{\sigma^*})=K(T)$.
	\end{dfn}
	Now define the following $$\Gamma^{\rho, \zeta}_{T}:=\left\{\gamma_{|S}\in O(S)\mid\gamma\in O(L), \, \gamma_{|T}=\text{id}, \, \gamma\circ\rho=\rho\circ\gamma\right\}$$ and $$\Delta'(L):=\left\{\nu\in\Delta(L)\mid \nu=\nu_T+\nu_S,\, \nu_T\in T_\mathbb{Q},\, \nu_S\in S_\mathbb{Q},\, \nu_T^2, \nu_S^2<0\right\}.$$ Moreover, we set $\mathcal{H}_T:=\bigcup_{\delta\in\Delta(S)}\delta^{\perp}$ and $\mathcal{H}'_{T}:=\bigcup_{\delta\in\Delta'(S)}\delta^{\perp}$ where again we use the convention that for a sublattice $S\subset L$ we denote $\Delta'(S):=\Delta'(L)\cap S$. Then \cite[Theorem 5.6, Proposition 6.2]{BCS_complex} state that the restriction of the period map to the moduli space of $K(T)$-general IHS manifolds of type $L$ $$\mathcal{M}^{\rho, \zeta}_{K(T)}\to \Omega^{\rho, \zeta}_{T}\setminus\left(\mathcal{H}_{T}\cup\mathcal{H}'_{T}\right)$$ is an isomorphism and it induces an isomorphism on the quotients: \begin{equation} \label{eq: Moduli}
		\mathcal{P}^{\rho, \zeta}_{K(T)}:\mathcal{N}^{\rho, \zeta}_{K(T)}:=\frac{\mathcal{M}^{\rho, \zeta}_{K(T)}}{\text{Mon}^2(T, \rho)}\to\frac{\Omega^{\rho, \zeta}_{T}\setminus\left(\mathcal{H}_{T}\cup\mathcal{H}'_{T}\right)}{\Gamma^{\rho, \zeta}_{T}}
	\end{equation} 
	where we denoted with $\text{Mon}^2(T, \rho)$ the group of $(\rho, T)$-polarized monodromy operators:\begin{equation*}
		\text{Mon}^2(T, \rho):=\left\{g\in\Mon^2(L)\mid g_{|T}=\id, \, g\circ\rho=\rho\circ g\right\}.
	\end{equation*}
	We want now to give a more general notion of polarization and introduce the $(M, j)$-polarization for a lattice $M$ of signature $(1,t)$ with a primitive embedding $j:M\subset L$ defined in \cite[Definition 3.1]{CAMERE_lattice_polarized} as follows.
	\begin{dfn}\label{rmk: Mpolarization}
		Given an IHS manifold $X$ of type $L$ we say that it carries an $(M, j)$-polarization if it has: \begin{enumerate}
			\item a marking $\eta\colon H^2(X, \mathbb{Z})\to L$;
			\item a primitive embedding $\iota\colon M\hookrightarrow\Pic(X)$ such that $\eta\circ\iota= j$.
		\end{enumerate} 
	\end{dfn}
	\begin{rmk}
		The $(\rho, j)$-polarization is a special type of $(M, j)$-polarization with $M=T$, the invariant lattice for the automorphism $\rho$, and on which we ask the existence of an automorphism satisfying item iii) of \Cref{def: rho_j polarizzazione}.
	\end{rmk}
	Moreover, it is not hard to see that the notion of $K(T)$-generality derives from the notion of $K(M)$-generality for an $(M,j)$-polarized IHS manifold given in \cite[Definition 3.10]{Camere_lattice_polarized_rmks}. Indeed, let $\mathcal{C}_M$ be the connected component of the positive cone such that $\iota(\mathcal{C}_M)$ contains the Kähler cone $\mathcal{K}_X$ of an $(M,j)$-polarized IHS manifold $X$. We define also $K(M)$ as a connected component (also called \emph{chamber}) of $$\mathcal{C}_M\setminus\bigcup_{\delta\in\Delta(M)}\delta^{\perp}\subset M\otimes \mathbb{R}.$$
	\begin{dfn}\label{def:K-gen}
		An $(M, j)$-polarized IHS manifold $(X, \eta)$ is $K(M)$-general if $\iota(K(M))=\mathcal{K}_X\cap\iota(\mathcal{C}_M)$.
	\end{dfn}
	In order to see that this definition is a generalization of the one given above suppose that $(X, \eta)$ is a $(T, j)$-polarized IHS manifold of type $L$ where $T$ is the invariant lattice of an automorphism $\rho\in O(L)$. Suppose moreover that the $(T, j)$-polarization extends in a natural way to a  $(\rho, j)$-polarization, i.e. condition iii) of \Cref{def: rho_j polarizzazione} is satisfied. Then, as $T$ is the invariant sublattice of $\rho$, both definitions of $\mathcal{C}_T$ coincide. For the same reason $\mathcal{K}^{\sigma^*}_X=\mathcal{K}_X\cap \iota (\mathcal{C}_T)$. Looking at \Cref{def: rho_j polarizzazione}, we see that $\mathcal{K}_X\cap \iota (\mathcal{C}_T)=\mathcal{K}^{\sigma^*}_X=\iota(K(T))$ is equivalent to requiring that $\eta(\mathcal{K}_X\cap \iota (\mathcal{C}_T))=\eta(\mathcal{K}^{\sigma^*}_X)=K(T)$. In \textit{loc. cit.} the author finds also a period map and its injective restriction with statements similar to the $(\rho, j)$-polarized case.

\section{Cubic threefolds}\label{sec:cubiche}
In this section we define the objects we are mainly interested in, i.e. the nodal cubic threefold $C$ and the $K3$ surface whose Hilbert square will be our ``good candidate'' as said in the introduction. The construction highlighted here is standard, a good reference for it is \cite{viktorova2024classificationsingularcubicthreefolds}.\newline

	Let $C\subset \mathbb{P}^4$ be a cubic threefold with an isolated singularity in $p_0\in\mathbb{P}^4$, which we may assume to be $(1:0:...:0)$. Thus its equation is of the form \begin{equation*}
		f(x_0:x_1:x_2:x_3:x_4)=x_0f_2(x_1,x_2,x_3,x_4)+f_3(x_1,x_2,x_3,x_4)=0
	\end{equation*} where the $f_i$ are homogeneous polynomials of degree $i$ in $\mathbb{C}[x_1,x_2,x_3,x_4]$ sufficiently generic in $|\mathcal{O}_{\mathbb{P}^3}(3)|$.
	Let now $Y\subset \mathbb{P}^5$ be the triple cover of $\mathbb{P}^4$ branched over $C$, which is then described by the vanishing of the following polynomial \begin{equation*}
		F(x_0:x_1:x_2:x_3:x_4:x_5)=x_0f_2(x_1,x_2,x_3,x_4)+f_3(x_1,x_2,x_3,x_4)+x_5^3
	\end{equation*}
	using the same notation as above. This hypersurface has again an isolated singularity of type ADE at the point $p\in\mathbb{P}^5$ of coordinates $(1:0:...:0)$. 
    \begin{dfn}
        We say that a cubic fourfold $Y\subset\mathbb{P}^5$ is \emph{associated} to a cubic threefold $C\subset\mathbb{P}^4$ when $Y\to \mathbb{P}^4$ is a triple cover branched on $C$.
    \end{dfn}
	Let us consider now the hyperplane $H\subset\mathbb{P}^5$ given by $\{x_0=0\}$. In this hyperplane, which we will identify with $\mathbb{P}^4$ of coordinates $(x_1:x_2:x_3:x_4:x_5)$, we consider the surface $\Sigma$ given by the following intersection: \begin{equation}\label{eq:4}
		\begin{cases}
			f_2(x_1,x_2,x_3,x_4)=0 \\
			f_3(x_1,x_2,x_3,x_4)+x_5^3=0.
		\end{cases}
	\end{equation}
	This is the complete intersection of a quadric $Q$ (defined by $f_2=0$) and a cubic $K$ (defined by $f_3+x_5^3=0$) in $\mathbb{P}^4$ when $f_2$ and $f_3$ are sufficiently generic. This surface is deeply linked to the fourfold $Y$ and a theorem by Wall \cite{Wall} links the singularities of $\Sigma$ to those of the blow-up $\Bl_p(Y)$ of $Y$ at $p$. \begin{thm}[\protect{\cite[Theorem~2.1]{Wall}}]\label{thmWall}
		Let $q$ be a singular point of $\Sigma$. If both $Q$ and $K$ have a singularity in $q$ then the whole line $\bar{pq}$ connecting p and q is singular in $Y$. \newline If $q$ is not a singularity of both $Q$ and $K$ and is an ADE singularity of type \textbf{T} for $Q$ or $K$ then one of the followings holds:
		\begin{enumerate} [i)]
			\item $Q$ is smooth at $q$ and the cubic fourfold $Y$ has exactly two singularities, namely $p$ and $p'$, on the line $\bar{pq}$ and $p'$ is of type \textbf{T}.
			\item $Q$ is singular at $q$ and the line $\bar{pq}$ meets $Y$ only in $p$ and the blow-up $\Bl_p(Y)$ of $Y$ in $p$  has a singularity of type \textbf{T} at $q$.
		\end{enumerate}
	\end{thm}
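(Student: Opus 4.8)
The plan is to prove this by the classical device of projecting $Y$ away from its double point $p$, which converts the statement into a package of local computations.

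First I would make the projection $\pi\colon\Bl_p Y\to H=\mathbb{P}^4$ explicit. Since $p$ is a point of multiplicity two on the cubic $Y=\{F=0\}$ with $F=x_0f_2+f_3+x_5^3$, a generic line through $p$ meets $Y$ in $2[p]$ plus a single residual point, so the projection is birational; substituting the parametrisation of the line $\overline{pa}$ by $(s:ta_1:\dots:ta_5)$ (for $a=(0:a_1:\dots:a_5)\in H$, with $p$ at parameter $(s:t)=(1:0)$) into $F$ and dividing by the common factor $t^2$ yields the equation $s\,f_2(a)+t\,(f_3(a)+a_5^3)=0$. Reading this off: $\overline{pa}\subset Y$ exactly when $a\in\Sigma$; the exceptional divisor $E$ is the projectivised tangent cone $\{f_2=0\}\subset\mathbb{P}^4$, which maps isomorphically onto $Q$; and the positive–dimensional fibres of $\pi$ are precisely the strict transforms of the lines $\overline{pa}$ with $a\in\Sigma$. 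Thus near a point $q\in\Sigma$ the morphism $\pi$ behaves (when $\Sigma$ has only isolated singularities) like the blow-up of $H$ along $\Sigma$, and $\Bl_p Y$ can be written down in explicit affine charts around the $\mathbb{P}^1$ lying over $q$. This dictionary is what transports singularities of $\Sigma$ at $q$ into singularities of $Y$ away from $p$ and of $\Bl_p Y$ along $E$.

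The three assertions then follow from the Jacobian criterion applied along $\overline{pq}=\{(s:tq_1:\dots:tq_5)\}$ together with this chart description. If both $Q$ and $K$ are singular at $q$, then $f_2(q)=0$, $\nabla f_2(q)=\nabla f_3(q)=0$ (gradients in $x_1,\dots,x_4$) and $q_5=0$, whence $\partial F/\partial x_0=t^2f_2(q)$, $\partial F/\partial x_i=t\,(s\,\partial_if_2(q)+t\,\partial_if_3(q))$ for $i=1,\dots,4$, and $\partial F/\partial x_5=3t^2q_5^2$ all vanish identically along $\overline{pq}$, so $\overline{pq}\subset\Sing Y$. If instead $\Sigma$ is singular at $q$ while, say, $Q$ is smooth there, one first notes that this forces $q_5=0$ and $\nabla f_3(q)=\lambda\,\nabla f_2(q)$ for a unique $\lambda$; feeding this into the same partials shows that they vanish simultaneously only at $p$ (parameter $t=0$) and at $p'=(-\lambda:q_1:\dots:q_4:0)$, so $p$ and $p'$ are the only singular points of $Y$ on that line. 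The remaining case, $q$ a singular point of $Q$ and a smooth point of $K$, is handled on $\Bl_p Y$ rather than on $Y$: the relevant point now sits on $E$ over $q$, and both the statement about $\overline{pq}$ and the location of the singularity come out of the same line computation combined with the chart description of $\pi$ near $E$.

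The step I expect to be the genuine obstacle is not the \emph{existence} of these singularities but the identification of their ADE \emph{type}: showing that the singularity produced at $p'$ (resp. of $\Bl_p Y$ over $q$) carries the same symbol $\mathbf{T}$ as the singularity at $q$ of $Q$ or $K$ — equivalently, of the complete intersection $\Sigma$. For this I would pass to analytic–local coordinates, use smoothness of one of $Q$, $K$ at $q$ to present $\Sigma$ locally as a hypersurface singularity inside a smooth threefold and bring it to ADE normal form, and then transport that normal form through the explicit charts of $\pi$ — equivalently, project $Y$ away from the second double point $p'$ and rerun the line–and–residual–point analysis from there — until the local equation of $Y$ at $p'$ is in recognisable normal form. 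The bookkeeping is heavier than a plain completion of squares because the coordinate $x_5$ enters through the cube $x_5^3$, so the Weierstrass–type reductions must be done with care; one also needs the genericity of $f_2$ and $f_3$ (isolated singularities, $\Sigma$ reduced and irreducible, $\Bl_p Y$ normal) to ensure the local models are the expected ones. A reference that carries out these computations in full is \cite{Wall}.
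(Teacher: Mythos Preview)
The paper does not give its own proof of this statement: it is quoted verbatim as \cite[Theorem~2.1]{Wall} and then immediately applied, with no argument supplied. So there is no ``paper's proof'' to compare against; the author is simply importing Wall's result.

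That said, your sketch is the standard route to Wall's theorem and is essentially correct. Projecting $Y$ from the double point $p$, identifying the exceptional divisor with $Q$ and the positive-dimensional fibres with the strict transforms of lines $\overline{pq}$ for $q\in\Sigma$, and then running the Jacobian criterion along such a line is exactly how one locates the singularities in each of the three cases. Your computation that $Q$ smooth at $q$ forces $q_5=0$ and $\nabla f_3(q)=\lambda\,\nabla f_2(q)$, and that the partials of $F$ along $\overline{pq}$ then factor as $t(s+t\lambda)\,\partial_i f_2(q)$, is correct and pins down $p'=(-\lambda:q_1:\dots:q_4:0)$ precisely.

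You are also right to flag the type identification as the substantive step. The location arguments are elementary; the assertion that the ADE symbol $\mathbf{T}$ is preserved is the part that requires genuine local analysis (normal forms, or equivalently comparing Milnor numbers after a suspension), and this is where Wall's paper does the work. One small caveat: in case~(i) your phrasing ``an ADE singularity of type $\mathbf{T}$ for $Q$ or $K$'' should really be read as ``for $\Sigma$'', since when $Q$ is smooth at $q$ the surface $\Sigma$ is locally a hypersurface in $Q$ cut out by $K$, and it is the singularity type of $\Sigma$ (not of $K$ as an ambient hypersurface) that transfers to $p'$. Your later paragraph handles this correctly by presenting $\Sigma$ locally as a hypersurface in a smooth threefold, but the earlier sentence could be misread.
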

	As we asked $p$ to be the only singularity on $Y$, the only possibility is the one described by item $ii)$ of the \Cref{thmWall}. Thus the possibilities for the singularities of $\Sigma$ are exactly those that can be found in the following table, based on \cite[Lemma~2.1]{DR01}. 
	\begin{center}
		\begin{tabular}{|c||c|c|c|c|c|c|c|c|} 
			\hline
			\textbf{T} & \boldmath $A_1$ & \boldmath $A_2$ & \boldmath $A_{n\geq 3}$ & \boldmath $D_{4}$ & \boldmath $D_{n\geq 5}$ & \boldmath $E_6$ & \boldmath $E_7$ & \boldmath $E_8$\\ 
			\hline
			\boldmath$\hat{T}$ & $\emptyset$ & $\emptyset$ &  \boldmath $A_{n-2}$ & 3\boldmath $A_1$ & \boldmath $A_1$ +\boldmath $D_{n-2}$ & \boldmath $A_5$ &  \boldmath $D_{6}$ & \boldmath $E_7$\\
			\hline
		\end{tabular}
	\end{center}
	Here \boldmath $\hat{T}$ is the type of the singularities that one can find on the exceptional divisor of the blow-up of a variety in a point \unboldmath$p$ that has a singularity of type \textbf{T}. \newline
	Another interesting observation on $\Sigma$ can be done following the argument of C. Lehn (\cite[Lemma 3.3, Theorem 3.6]{lehn2015twisted}) and Hassett (\cite[Lemma 6.3.1]{hassett_2000}). 
	\begin{thm}\label{Lehn}
		Let $Y\subset\mathbb{P}^5$ be a cubic fourfold with simple isolated singularities and suppose that it is neither reducible, nor a cone over a cubic threefold. Let $p\in Y$ be a singular point and assume that there exist no planes $\Pi\in Y$ such that $p\in\Pi$. Then the minimal resolution of $\Sigma:=F(Y,p)$, the Fano variety of lines in $Y$ passing through $p$, is a $K3$ surface. Moreover, $F(Y)$, the Fano variety of lines in $Y$, is birational to $\hilb^2(\Sigma)$
	\end{thm}
	\begin{proof}
		We write here the explicit morphism as it will be useful for the next sections. 

For the first part, see the discussion above. Moreover, $\Sigma$ is a (2,3)-complete intersection in $\mathbb{P}^4$ having only isolated ADE singularities, so it admits a minimal model which is a $K3$ surface.

For the rest of the proof, consider $W\subset Y$ the cone over $\Sigma$ with vertex $p$. This is a Cartier divisor on $Y$ cut out by the equation $f_2=0$. Hence a generic line $l\subset Y$ intersects $W$ in exactly two points counted with multiplicity, thus defining a closed subscheme $\xi_{l\cap W}$ of length two on $\Sigma$. Therefore, we can define the birational map 
\begin{align*}
\varphi^{-1}:F(Y) & \dashrightarrow \hilb^2(\Sigma) \\
l & \mapsto \xi_{l\cap W}
\end{align*}
The birational inverse of $\varphi$ is given by the natural map
\begin{align*}
\varphi:\hilb^2(\Sigma) & \to F(Y) \\
\xi & \mapsto l_{\xi}
\end{align*}
where we define the residual line $l_{\xi}$ as follows. The intersection between $Y$ and $\langle\xi, p\rangle\simeq\mathbb{P}^2$ consists of a cone over $\xi$ and a line $l_\xi$. 
	\end{proof}
 \begin{rmk}\label{rmk: indeterminacy}
		Note that $\varphi$ has no indeterminacy points. Moreover, note that the indeterminacy locus of $\varphi^{-1}$ is contained in $F(Y, p)\simeq \Sigma$. Indeed, looking at the definition of $\varphi^{-1}$  in the proof above we can see that it is not defined when a line $l\subset Y$ is contained in $W$, the cone over $\Sigma$ with vertex $p$. This means that either $l\subset \Sigma$ or $p\in l$, the former is impossible otherwise the plane $\Pi_{l,p}:=\langle l, p\rangle$ would be contained in $Y$.
	\end{rmk}
 \begin{rmk}\label{rmk: planes}
     The condition of not having planes passing through the singular point of a cyclic cubic fourfold is a generic condition. For this reason from now on we will suppose that this condition is satisfied by every cyclic cubic fourfold appearing also when not explicitly said.
 \end{rmk}
 \subsection{Moduli space of cubic threefold as a ball quotient}\label{sec: riassunto ACT}
 In this section we recall Allcock--Carlson--Toledo’s construction of a period map for the moduli space of GIT stable cubic threefolds as done in \cite{ACT}. This section is not to be intended as a complete overview of their work but as a recollection of their results useful to understand the following sections.\newline

 We denote by $\mathcal{C}_3^s:=|\mathcal{O}_{\mathbb{P}^4}(3)|\parallelsum PGL_5(\mathbb{C})$ the GIT moduli space of $PGL_5(\mathbb{C})$-stable of stable cubic threefolds and $\mathcal{C}_3^{sm}$ the sublocus of smooth cubic threefolds (this is the open set determined by the nonvanishing of the discriminant as shown in \cite[Chapter 5]{mukai_2003}). Now, given $C\in\mathcal{C}_3^{sm}$ the idea outlined by Allcock--Carlson--Toledo is to use the associated cyclic cubic fourfold to induce a period map on $C\in\mathcal{C}_3^{sm}$. Indeed, for any cubic fourfold $Y$ we can define a \emph{marking}, i.e. an isometry $$\eta:H^4_{\circ}(Y, \mathbb{Z})\to S(-1)\simeq U^{\oplus 2}\oplus E_8^{\oplus 2}\oplus A_2$$ of the middle primitive cohomology. Moreover, the \emph{period} of the marked pair $(Y, \eta)$ is just $[\eta(H^{3,1}(Y))]\in\mathbb{P}(S(-1)\otimes \mathbb{C})$. Let $\sigma$ be the covering automorphism of the associated cubic fourfold $Y$. Given a marking $\eta$ for $Y$ we can define the abstract isometry induced by $\sigma$ as $\rho:=\eta\circ\sigma\circ\eta^{-1}$ and we define a \emph{framing} as the equivalence class of markings $\tilde{\eta}$ compatible with $\rho$, i.e. $\tilde{\eta}\circ\rho=\rho\circ\tilde{\eta}$, up to action of $\mu_6:=\{\pm \id_{S(-1)}, \pm \rho, \pm \rho^2\}$.

 Now, we denote by $\mathcal{F}^{sm}_3$ the moduli space of framed smooth cubic threefolds and by $\Gamma:=\{\gamma\in O(S(-1))\mid \gamma\circ\rho=\rho\circ\gamma\}$. The latter acts on the former by composition with the framing, i.e. $(C,\eta)\mapsto(C,\gamma\circ\eta)$. As $\mu_6\subset\Gamma$ acts trivially on $\mathcal{F}^{sm}_3$ we consider $\mathbb{P}\Gamma:=\Gamma/\mu_6$ and $\mathcal{C}_3^{sm}\simeq\mathcal{F}^{sm}_3/\mathbb{P}\Gamma$. So, any framing $\eta: H^4_{\circ}(Y, \mathbb{Z})\to S(-1)$ induces an isomorphism $\eta: H^4_{\circ}(Y, \mathbb{Z})\to S(-1)_\zeta$, where $S(-1)_\zeta$ is the eigenspace of $S(-1)\otimes\mathbb{C}$ for the eigenvalue $\zeta$ of the isometry $\rho$. Here by $\zeta$ we denote a primitive third root of unity. Note that if we act on a a marked cubic fourfold of period $\eta(H^{3,1}(Y))$ with an element of $\mu_6$ we get that the period is multiplied by a non-zero scalar, therefore it remains well defined on the framed cubic threefolds. Therefore we have the following.

\begin{thm}
    The period map sending a framed cubic threefold $(C,\eta)$ to $[\eta(H^{3,1}(Y))]\in\mathbb{P}(S(-1)_\zeta)$ is an isomorphism onto the image equivariant with respect to the action of $\mathbb{P}\Gamma$. Moreover, the image is the complement of an hyperplane arrangement $\mathcal{H}$ with:\begin{equation*}
        \mathcal{H}:=\bigcup_{\delta\in S(-1), \, \delta^2=2}\delta^{\perp}.
    \end{equation*}
\end{thm}
\begin{proof}
    See \cite[Theorem 1.9]{ACT}.
\end{proof}
In \textit{loc. cit.} the authors study also an extension of the period map for the GIT stable cubic threefolds (see \cite{AllcockSing} for the details on GIT stability of cubic threefolds). In particular in \cite[Chapter 6]{ACT} the authors show, by studying the limit Hodge structure of the nodal degeneration of a cubic threefold, that the period map can be extended to $\Delta_3^{A_1}$ using the period of its associated $K3$ surface, i.e. the one defined in \Cref{sec:cubiche}. 
\begin{thm}[\protect{\cite[Theorem 6.1]{ACT}}]\label{EstensioneACT}
    The period map above defined can be holomorphically extended to an isomorphism between the GIT stable locus $\mathcal{C}^s_3$ and its image, mapping $\Delta_3^{A_1}$ to a divisor.
\end{thm}

	\subsection{Motivating example}\label{ex:fondante}
		
  Here we introduce the example which will be the core of our analysis. Given, as before, a ramified cyclic covering $Y\to\mathbb{P}^4$ branched along the cubic $C$ there exists a covering automorphism $\sigma$ on $Y$ acting by multiplication by a primitive third root of unity $\zeta$.  Any marking of the middle primitive cohomology $H^4_{\circ}(Y, \mathbb{Z})\to S(-1)$ can be composed with the Abel--Jacobi map in order to induce a marking on the middle primitive cohomology of the Fano variety of lines $F(Y)$.
		\begin{equation*}
			\begin{tikzcd}
				H^4_{\circ}(Y, \mathbb{Z}) \ar[r, "\tilde{\eta}"] \ar[d, "A"] & S(-1) \ar[d, "-\text{id}"] \\
				H^2_{\circ}(F(Y), \mathbb{Z}) \ar[r, "\eta"] & S.
			\end{tikzcd}
		\end{equation*}
		From \cite{BD} we know that $S$ admits a unique, up to isometries, primitive embedding in $L$ and this embedding has the lattice generated by an hyperplane section under the Plücker embedding $T\simeq \langle 6\rangle$ as orthogonal complement. We are interested in giving a relation between cubic threefolds and IHS manifolds of $K3^{[2]}$-type. This is classically (e.g. \cite{ACT}) done by looking at the cubic fourfolds which cover $\mathbb{P}^4$ and branch over a cubic threefold. Indeed, cubic fourfolds with an ordinary double point are described by Hassett in \cite[Section 4.2]{hassett_2000} as cubic fourfolds with discriminant six and they admit an associated $K3$ surface. Then, in Section 6 of \textit{loc. cit.}, the author gives a nice description of the Plücker divisor on a fourfold of $K3^{[2]}$-type associated to a generic cubic fourfold of discriminant six: it is $\theta=2\theta_{K3}-3\epsilon$ where $\theta_{K3}$ is a square six class on the associated $K3$ surface and $\epsilon$ is half of the exceptional class coming from the Hilbert--Chow morphism. In our case even though we do not ask for genericity we make the same choice; moreover, as they are all the same up to isometry, we choose $\theta_{K3}=3u_1+u_2$. So we have assigned an embedding $j: \langle 6\rangle\to L$ and $j(\langle 6\rangle)^\perp\simeq S$ as expected. Now using \cite[Cor~1.5.2]{Nik1980} (see also \Cref{teonik} below) we can extend the isometry $H^2_{\circ}(F(Y), \mathbb{Z})\oplus \langle 6\rangle\xrightarrow{\eta\ \oplus\ j} S(-1)\oplus j(\langle 6\rangle)$ to a marking $\bar{\eta}\colon H^2(F(Y), \mathbb{Z})\to L$. Finally, $\sigma\in \Aut(Y)$ induces on $F(Y)$ an automorphism that will be also denoted with $\sigma\in \Aut(F(Y))$ in order to simplify the notation. Therefore there exists a natural isometry on $L$ that is $\rho:=\bar{\eta}\circ\sigma^*\circ\bar{\eta}^{-1}$. So, $F(Y)$ is an IHS manifold of $K3^{[2]}$-type and admits a $(\rho, j)$-polarization with the lattice $\langle 6\rangle$ playing the role of $T$ in \Cref{def: rho_j polarizzazione} and a primitive third root of unity as $\zeta$. The period map $\mathcal{P}_{\langle 6\rangle}^{\rho, \zeta}$ relative to this space has the period domain isomorphic to a 10-dimensional complex ball \begin{equation*}
			\Omega^{\rho,\zeta}_{T}:=\left\{x\in\mathbb{P}(S_\zeta) | h_{S}(x,x)>0\right\}\simeq\mathbb{C}B^{10}.
		\end{equation*}
 
 \section{Degeneracy lattices} \label{DegSec}
	In this section we begin to study the degenerations of the automorphism $\rho$ over the nodal hyperplane. In order to do so we provide a general strategy which then will be used in each considered case. \newline

	We want to focus on the case described in \Cref{ex:fondante}. Take $\omega$ a period in $\mathcal{H}_\Delta$ and $(X, \eta)\in \mathcal{P}^{-1}_{\langle 6\rangle}(\omega)$ a point in the fiber of the period map of $\langle 6\rangle$-polarized IHS manifolds of $K3^{[2]}$-type. Remember that this is by \Cref{EstensioneACT} is the period of a nodal cubic threefold.
    \begin{rmk}\label{rmk:ample}
		As explained in \cite{BCS} the isometry $\rho\in O(L)$ is not represented by any automorphism of $X$. In fact if $\rho$ was represented by an automorphism of $X$, this one would be automatically non-symplectic. Consider now $l\in \NS(X)$ an ample class (it always exists as $X$ is projective) then consider $l+\rho^*l+(\rho^*)^2l$ which is still ample and invariant. Therefore it is  a multiple of the generator of the rank one invariant lattice, we call $\theta$ the primitive ample invariant class. Since $\delta_i\in S$, the divisor $\eta^{-1}(\delta_i)$ is orthogonal to $\theta$, yielding a contradiction as $(\eta^{-1}(\delta_i), \theta)>0$ by the ampleness of $\theta$. 
	\end{rmk}
    So, the main issue in finding an automorphism representing $\rho$ is the fact that there exist some MBM classes in the Nèron--Severi lattice which are orthogonal to the invariant lattice of the automorphism $\rho$. In order to deal with it we define the following.
	\begin{dfn}\label{dfn:degenlattice}
		The \emph{degeneracy lattice} of $(X, \eta)$ is the sublattice of $S$ generated by those MBM classes $\delta_i\in S$ which are orthogonal to $\omega$. 
	\end{dfn}
	This lattice is $\rho$-invariant and orthogonal to $j(\langle 6\rangle)$. So, in general, the degeneracy lattice will be $R_0:=\Span(\delta_1, \dots, \delta_n, \rho(\delta_1), \dots, \rho(\delta_n))$.
	\begin{rmk}\label{rmk: roots}
		For each $i\in\left\{1, \dots, n\right\}$ the sublattice $\Span(\delta_i, \rho(\delta_i))=:R_{\delta_i}\subset R_0$ is just the degeneracy lattice of a polarized IHS manifold $(X, \eta)$ generic in $\mathcal{H}_\Delta$ and, with a simple computation, $R_{\delta_i}\simeq A_2(-1)$. For the sake of completeness let us show this computation. By definition $R_{\delta_i}\simeq\langle \delta_i, \rho(\delta_i) \rangle$ with $\delta_i^2=(\rho(\delta_i))^2=-2$. Moreover, $(\delta_i, \rho(\delta_i))=(\rho(\delta_i), \rho^2(\delta_i))=(\rho(\delta_i), -\delta_i-\rho(\delta_i))=-(\rho(\delta_i), \delta_i)+2$ which implies $(\delta_i, \rho(\delta_i))=1$.
	\end{rmk} 
	
	Recall that in \Cref{ex:fondante} we provided an embedding $j: \langle 6\rangle\to L$ such that $j(\langle 6\rangle)^\perp\simeq S$. With the help of $j$ we can induce an embedding of $\langle 6\rangle \oplus R_0$ which in general will not be primitive. So, we define $T_0:=\overline{\langle 6\rangle \oplus R_0}$ as the saturation of $\langle 6\rangle \oplus R_0$ in $L$. Note that $T_0\hookrightarrow \Pic(X)$ by definition of degeneracy lattice and the equality will hold for a generic element, i.e. a $\langle 6\rangle$-polarized IHS fourfold of $K3^{[2]}$-type which has a generic period orthogonal to a fixed number of MBM classes $\delta_1, \dots, \delta_n$. \newline
	
	The strategy we will use in the following sections is similar to the one outlined in \cite{BCS} and in \cite[\S 11]{DolgachevKondo}. We outline it here. First, we will prove the following claim for each family having generically the nodal period considered in each case.
	\begin{claim} \label{claim}
		The Picard group of the Hilbert square $\hat{\Sigma}^{[2]}$ of the minimal resolution of the surface defined in \Cref{sec:cubiche} is generated by the square six polarization and $2i$ classes of square $(-2)$ orthogonal to it, with $i=1, \dots, 4$. Therefore it is generic in the above sense.
	\end{claim}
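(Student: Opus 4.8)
The plan is to reduce the statement to a computation of the Picard lattice of the $K3$ surface $\hat{\Sigma}$ itself, and then to pin this down by playing an explicit lower bound (algebraic classes written down from the geometry) against an upper bound on the Picard number coming from the genericity of the period on the stratum $\Delta_3^{A_i}$. First, since $\hat{\Sigma}$ is a projective $K3$ surface one has $H^2(\hat{\Sigma}^{[2]},\mathbb{Z})=H^2(\hat{\Sigma},\mathbb{Z})\oplus\mathbb{Z}\delta$ with $2\delta$ the Hilbert--Chow exceptional class and $\delta^2=-2$, hence $\Pic(\hat{\Sigma}^{[2]})=\Pic(\hat{\Sigma})\oplus\mathbb{Z}\delta$; moreover $\hat{\Sigma}^{[2]}$ is birational to $F(Y)$, so it has the same Picard lattice and, under the above splitting, the square six polarization of Example~\ref{ex:fondante} reads $\theta=2v-3\delta$ for a class $v\in\Pic(\hat{\Sigma})$ with $v^{2}=6$ (one may take the complete-intersection polarization). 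Thus it suffices to show that $\Pic(\hat{\Sigma})$ has rank $2i$ and that the lattice $\Pic(\hat{\Sigma})\oplus\mathbb{Z}\delta$ is generated by $\theta$ together with $2i$ classes of square $-2$ orthogonal to it; the last point is a finite lattice-theoretic check of the same type as in \cite[Proposition~4.6]{BCS} (for $i=1$ one has $\Pic(\hat{\Sigma})=U(3)$ and $\delta=\theta-2\delta_{1}-2\rho(\delta_{1})$, so $\theta,\delta_{1},\rho(\delta_{1})$ do generate).

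For the lower bound I would follow \cite[\S 11]{DolgachevKondo} and exhibit the classes. The class $\theta$ is algebraic, being the invariant ample class (Remark~\ref{rmk:ample}); and the generators $\delta_{k},\rho(\delta_{k})$ of the degeneracy lattice $R_{0}$ are algebraic too: each $\delta_{k}$ lies in $S$ and is orthogonal to the period $\omega$, and since $\delta_{k}$ is a real class while the eigenspaces $S_{\zeta}$ and $S_{\bar{\zeta}}$ are totally isotropic for the eigen-automorphism, orthogonality to $\omega$ forces orthogonality to $\bar{\omega}$, so $\delta_{k}$ is of type $(1,1)$ and hence lies in $\NS(\hat{\Sigma}^{[2]})$ (and $\rho(\delta_{k})$ by $\rho$-invariance of the degeneracy lattice). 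The input one must supply here is that for a generic $C\in\Delta_3^{A_i}$ the degeneracy lattice is exactly $R_{0}$, generated by the $2i$ square $(-2)$ classes $\delta_{1},\rho(\delta_{1}),\dots,\delta_{i},\rho(\delta_{i})$: this comes from the description, in \cite{ACT}, of the $A_i$-stratum as the sub-ball orthogonal to $i$ independent Eisenstein generators, each contributing a copy of $A_2(-1)$ as in Remark~\ref{rmk: roots}. Concretely one also realises these classes on $\hat{\Sigma}$ itself --- the exceptional $(-2)$-curves of $\hat{\Sigma}\to\Sigma$ resolving the $\hat{T}$-singularities of $\Sigma$ produced by Wall's Theorem~\ref{thmWall} and the table, together with the pullbacks to $\hat{\Sigma}$ (a triple cover) of the Picard group of the resolved quadric $Q=\{f_{2}=0\}$, plus the extra classes forced when $Q$ degenerates --- and, computing the resulting intersection matrix and saturating via \cite[Cor.~1.5.2]{Nik1980}, one gets $T_{0}\subseteq\Pic(\hat{\Sigma}^{[2]})$.

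The crux is the upper bound $\operatorname{rk}\Pic(\hat{\Sigma}^{[2]})\le 2i+1$, equivalently $\operatorname{rk}T_{\hat{\Sigma}}\ge 22-2i$ for the transcendental lattice of $\hat{\Sigma}$. For this I would use that, via the extended ACT period map, the Hodge structure on the transcendental part of $\hat{\Sigma}^{[2]}$ is the period of $C$, and that the restriction of that period map to $\Delta_3^{A_i}$ is dominant onto a $(10-i)$-dimensional ball quotient. The order three automorphism $\tau$ induced by $x_{5}\mapsto\zeta x_{5}$ has no invariant vector on $T_{\hat{\Sigma}}$ (an invariant transcendental vector would be algebraic), so the $\zeta$-eigenspace of $T_{\hat{\Sigma},\mathbb{C}}$ has dimension $\tfrac{1}{2}\operatorname{rk}T_{\hat{\Sigma}}$ and the period varies in an open subset of its projectivization; dominance onto a $(10-i)$-dimensional base then forces $\tfrac{1}{2}\operatorname{rk}T_{\hat{\Sigma}}-1\ge 10-i$. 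Combined with $T_{0}\subseteq\Pic(\hat{\Sigma}^{[2]})$, $\operatorname{rk}T_{0}=2i+1$, and the primitivity of $T_{0}$ in $L$, this forces $\Pic(\hat{\Sigma}^{[2]})=T_{0}$, which is the assertion and is exactly what ``generic in the sense above recalled'' means. The main obstacle I anticipate is precisely this transcendentality input: one must ensure that $\Delta_3^{A_i}$ is not contained in a proper Noether--Lefschetz sublocus of the family $\{\hat{\Sigma}^{[2]}\}$ and that a generic such $C$ has degeneracy lattice exactly $R_{0}$, which rest on the fine behaviour of the ACT period on the singular strata rather than on formal arguments; a secondary, genuinely case-dependent difficulty is the identification of the algebraic classes on $\hat{\Sigma}$ once $Q$ becomes singular --- as already happens for $i=2$, where $Q$ is a cone and the three exceptional curves of $\hat{\Sigma}\to\Sigma$ are permuted by $\tau$ --- which is why the uniformly stated Claim is applied separately in Sections~\ref{sec:A1}, \ref{sec:A3A4} and \ref{sec: A2}.
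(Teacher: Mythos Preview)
Your strategy is sound and arrives at the same conclusion as the paper, but the execution is genuinely different. The paper does not give a unified proof of the Claim: it is established separately in Sections~\ref{sec:A1}, \ref{sec:A3A4} and \ref{sec: A2} by (i) quoting Stegmann's explicit Gram matrices for $\Pic(\hat{\Sigma})$ (or Artebani--Sarti for $A_1$), (ii) invoking the dimension count of Appendix~\ref{appendix} to pin the Picard rank at $2i$, and (iii) exhibiting by hand a change of basis $\Pic(\hat{\Sigma})\oplus\langle-2\rangle\simeq\langle 6\rangle\oplus W$ with $W$ a concrete root lattice ($A_2(-1)$, $D_4(-1)$, $E_6(-1)$, $E_8(-1)$). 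Your route replaces Stegmann's geometry by the degeneracy-lattice formalism on the period side and replaces the parameter count by dominance of the ACT period onto a $(10-i)$-ball; these are logically equivalent inputs, so the argument goes through.

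What each approach buys: the paper's explicit Gram matrices immediately give the isomorphism type of $W$, which is indispensable for the next step (checking triviality of $\rho$ on $D_W$ in Theorem~\ref{thm:teoremino} and detecting the obstruction in the $A_2$ case). Your approach is cleaner conceptually and makes the role of genericity transparent, but it only yields $\Pic(\hat{\Sigma}^{[2]})=T_0$ as an abstract rank-$(2i+1)$ lattice containing $\langle 6\rangle\oplus R_0$ with finite index; identifying $W$ still requires the same saturation/isometry check you allude to. Two small caveats: first, the statement that the $A_i$-stratum is cut out in the ACT ball by exactly $i$ independent Eisenstein roots is not spelled out in \cite{ACT} for $i\ge 2$ and is in fact what the paper \emph{derives} from the Picard computation (see the hypotheses of Propositions~\ref{prop:A3}, \ref{prop:A4}), so if you want to run your argument independently you should source this elsewhere or fall back on the dimension count of Appendix~\ref{appendix}; second, your dominance inequality $\tfrac{1}{2}\operatorname{rk}T_{\hat{\Sigma}}-1\ge 10-i$ uses $\dim\Delta_3^{A_i}=10-i$, which is precisely the content of Appendix~\ref{appendix}, so the two proofs share that ingredient.
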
	  
	Then we want to look for an isometry on $L$ related to $\rho$ which has a bigger invariant lattice. In particular, denoting with $S_0:= T_0^{\perp}$ in $L$, we look at $\id_{T_0}\oplus\rho_{|S_0} \in O(T_0)\oplus O(S_0)$. If it can be lifted to an isometry $\rho_0\in O(L)$ then as $T_0\simeq \Pic(\hat{\Sigma}^{[2]})$ we can find an IHS manifold of $K3^{[2]}$-type with the same period (by definition of period of a nodal cubic given in \cite{ACT}) generic in the space of the IHS manifolds of $K3^{[2]}$-type which are $(\rho_0, j)$-polarized. It is important to remark that by definition of the $(\rho, j)$-polarization, as the isometry $\rho \in O(L)$ comes from a  non-symplectic automorphism $\sigma$ on $X$, it can be restricted to an isometry $\rho_{S_0}\in O(S_0)$. Indeed, as $\omega\in \delta^{\perp}$ we deduce that $\zeta_3\cdot\left(\rho\left(\delta\right),\omega\right)=\left(\rho\left(\delta\right),\rho\left(\omega\right)\right)=\left(\delta,\omega\right)=0$, thus the isometry $\rho$ can be restricted to an isometry of both the Picard lattice and its orthogonal complement. In order to lift the isometry we will apply the following result
	\begin{prop}[\protect{\cite[Cor~1.5.2]{Nik1980}}]\label{teonik}
		Let $L$ be a finite index overlattice of $S\oplus T\subset L$ determined by the pair $(H, \gamma)$, where $H<D_S$ is a subgroup and $\gamma: H\to D_T$ is a group monomorphism. Moreover, let $\rho_S\in O(S)$ and $\rho_T\in O(T)$ be two isometries such that the induced isometry on the discriminant $\rho^*_S\in O(D_S)$ restricts to an isometry of $H$.\newline Then the isometry $\rho_S\oplus\rho_T\in O(S)\oplus O(T)$ lifts to an isometry $\rho\in O(L)$ of $L$ if and only if $\rho_S|_H$ is conjugate to the induced isometry $\rho_T^*$ via $\gamma$, or equivalently $\gamma\circ\rho_S^*|_H=\rho^*_T\circ\gamma$.
	\end{prop}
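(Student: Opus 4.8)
The plan is to reduce the statement to Nikulin's description of finite-index overlattices of an orthogonal direct sum in which both summands are primitive, and then simply to track how the candidate isometry acts on the gluing data.

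First I would recall the construction of $L$ from the pair $(H,\gamma)$. Writing $D_S=S^{*}/S$, $D_T=T^{*}/T$ and letting $\pi\colon S^{*}\oplus T^{*}\to D_S\oplus D_T$ be the quotient map, the overlattice attached to $(H,\gamma)$ is $L=\pi^{-1}(\Gamma_\gamma)$, where $\Gamma_\gamma:=\{(x,\gamma(x))\mid x\in H\}\subseteq D_S\oplus D_T$ is the graph of $\gamma$; integrality of $L$ forces $\gamma$ to be an anti-isometry for the discriminant quadratic forms, and one has $L/(S\oplus T)\cong\Gamma_\gamma$. The point is that $L$ is completely recovered from $S\oplus T$ together with the subgroup $\Gamma_\gamma$.

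Next I would observe that every $\varphi_S\in O(S)$ extends uniquely to an isometry of $S_{\mathbb Q}$ preserving $S^{*}$, hence induces $\varphi_S^{*}\in O(D_S)$, and likewise for $T$; moreover $\pi$ is equivariant, i.e. $\pi\circ(\varphi_S\oplus\varphi_T)=(\varphi_S^{*}\oplus\varphi_T^{*})\circ\pi$. Applied to $\rho_S,\rho_T$, this shows that the rational isometry $\rho_S\oplus\rho_T$ of $S_{\mathbb Q}\oplus T_{\mathbb Q}$ preserves $L=\pi^{-1}(\Gamma_\gamma)$ if and only if $(\rho_S^{*}\oplus\rho_T^{*})(\Gamma_\gamma)=\Gamma_\gamma$, and that whenever this holds the restriction of $\rho_S\oplus\rho_T$ to $L$ is a bijective isometry (bijectivity follows since $(\rho_S^{*}\oplus\rho_T^{*})(\Gamma_\gamma)$ has the same order as $\Gamma_\gamma$, so it cannot land in a proper sublattice), that is, an element $\rho\in O(L)$. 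It then remains to compute when $\Gamma_\gamma$ is stable: for $x\in H$ we have $(\rho_S^{*}\oplus\rho_T^{*})(x,\gamma(x))=(\rho_S^{*}(x),\rho_T^{*}(\gamma(x)))$, and by hypothesis $\rho_S^{*}$ maps $H$ to itself, so this pair lies in $\Gamma_\gamma$ exactly when $\rho_T^{*}(\gamma(x))=\gamma(\rho_S^{*}(x))$. Hence $(\rho_S^{*}\oplus\rho_T^{*})(\Gamma_\gamma)\subseteq\Gamma_\gamma$ iff $\gamma\circ\rho_S^{*}|_H=\rho_T^{*}\circ\gamma$, and as $H$ is finite and the maps are injective this inclusion is automatically an equality (and the identity also yields $\rho_T^{*}(\gamma(H))=\gamma(H)$, showing the condition is symmetric and says precisely that $\rho_S^{*}|_H$ and $\rho_T^{*}|_{\gamma(H)}$ are conjugate through $\gamma$). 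Combining the two paragraphs gives the asserted equivalence.

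The only delicate points here are bookkeeping: verifying that $\rho_S^{*}$ is well defined on $D_S$ and that $\pi$ is equivariant (so that ``$\rho_S\oplus\rho_T$ preserves $L$'' genuinely translates into ``$\rho_S^{*}\oplus\rho_T^{*}$ preserves $\Gamma_\gamma$''), and checking that no hypothesis beyond $\rho_S^{*}(H)=H$ is needed — this is exactly what makes $\gamma\circ\rho_S^{*}|_H$ meaningful and is the relative substitute for the full statement $\rho_S^{*}\in O(D_S)$. Once Nikulin's correspondence $L\leftrightarrow\Gamma_\gamma$ is set up, no computation beyond this is required.
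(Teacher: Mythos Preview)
Your proof is correct and follows essentially the same route as the paper: both arguments use Nikulin's correspondence between finite-index overlattices $L\supset S\oplus T$ and gluing data $(H,\gamma)$, and reduce the lifting question to whether $\rho_S^{*}\oplus\rho_T^{*}$ preserves the graph $\Gamma_\gamma$ (equivalently, whether the pair $(H,\gamma)$ is fixed). The paper's version is terser---it simply observes that $(\rho_S^{*}(H),\ \rho_T^{*}\circ\gamma\circ(\rho_S^{*})^{-1})=(H,\gamma)$ determines the same overlattice---while you spell out the equivariance of $\pi$ and the bijectivity on $L$ explicitly, but the underlying idea is identical.
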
 
	\begin{proof}
		The first implication is obvious as $\rho_S=\rho|_S$ and $\rho_T=\rho|_T$. The second implication is done just by noting that in the diagram 
		\begin{equation*}
			\begin{tikzcd}
				H \ar[r, "\rho_S^*"] \ar[d, "\gamma"] & H \ar[d, "\gamma"] \\
				\gamma(H) \ar[r, "\rho_T^*"] & \gamma(H)	
			\end{tikzcd}
		\end{equation*}
		the pair $\left(\rho_S^*(H)=H, \rho_T^*\circ\gamma\circ(\rho_S^*)^{-1}=\gamma\right)$ determines an overlattice isometric to $L$ and we call this isometry $\rho$.
	\end{proof}
	Recall now that the Picard group of an IHS fourfold of $K3^{[2]}$-type can be written as $$\Pic(\hat{\Sigma}^{[2]})\simeq\Pic(\hat{\Sigma})\oplus\langle\epsilon\rangle\simeq T_{K3}\oplus\langle -2 \rangle$$ where $\epsilon$ is the $(-2)$-class given by half of the exceptional divisor introduced by the Hilbert--Chow morphism and $T_{K3}\subset L_{K3}$ is a sublattice of the $K3$ lattice $L_{K3}\simeq U^{\oplus 3}\oplus E_8(-1)^{\oplus 2}$. Thus we consider the isometry $\tilde{\rho_0}:=\rho_0|_{T_{K3}}$. Then the following proposition proves that there exists an automorphism $\sigma$ of the $K3$ surface $\hat{\Sigma}$ whose action on cohomology is conjugate to $\tilde{\rho_0}$.
	\begin{prop}\label{Prop:Namikawa}
		Let $\Sigma$ be a $K3$ surface and $\rho\in O(L_{K3})$. Suppose that $\rho(\omega)=\lambda\omega$ where $\omega\in H^2(\Sigma, \mathbb{C})$ is the period of $\Sigma$ and $1\neq\lambda\in\mathbb{C}^*$. Then if $\Pic(\Sigma)$ is fixed by $\rho$ there exists $\sigma\in\Aut(\Sigma)$ and an element $w$ in the Weyl subgroup of $\Sigma$ such that $w\rho w^{-1}=\sigma$.
	\end{prop}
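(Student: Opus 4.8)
The plan is to recognise $\rho$ as a Hodge isometry which, after correction by an element of the Weyl group $W_{\Sigma}$, maps a Kähler class to a Kähler class, and then to invoke the Hodge-theoretic Torelli theorem (applied to the $K3$ surface $\Sigma$, viewed as an IHS manifold). First I would check that $\rho$ is a Hodge isometry: since $\rho(\omega)=\lambda\omega$, it preserves the line $H^{2,0}(\Sigma)=\mathbb{C}\omega$, hence its conjugate $H^{0,2}(\Sigma)$, and being an isometry it preserves the orthogonal complement $H^{1,1}(\Sigma)$; consequently it preserves $\Pic(\Sigma)=H^{1,1}(\Sigma)\cap H^{2}(\Sigma,\mathbb{Z})$ and the transcendental lattice $T(\Sigma)=\Pic(\Sigma)^{\perp}$. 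Because $\lambda\neq 1$, every $\rho$-invariant class is orthogonal to $\omega$ and therefore lies in $\Pic(\Sigma)$; since $\Pic(\Sigma)$ is moreover fixed by $\rho$ and $\Sigma$ is projective, $\rho$ fixes an ample class, so it maps the connected component $\mathcal{C}_{\Sigma}$ of the positive cone containing that class onto itself. For a $K3$ surface this means $\rho$ lies in $\Mon^{2}(\Sigma)$, i.e. $\rho$ is a parallel transport operator.

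Next I would use the wall-and-chamber structure of $\mathcal{C}_{\Sigma}$: the Kähler cone $\mathcal{K}_{\Sigma}$ is one of the chambers of $\mathcal{C}_{\Sigma}\setminus\bigcup_{\delta}\delta^{\perp}$, with $\delta$ ranging over the $(-2)$-classes of $\Pic(\Sigma)$, and the Weyl group $W_{\Sigma}\subset\Mon^{2}(\Sigma)$ generated by the reflections $s_{\delta}$ acts simply transitively on these chambers while fixing $T(\Sigma)$ pointwise (so these reflections are Hodge isometries leaving $\omega$ untouched). As $\rho(\mathcal{K}_{\Sigma})$ is again such a chamber, there is $w\in W_{\Sigma}$ with $w(\mathcal{K}_{\Sigma})=\rho(\mathcal{K}_{\Sigma})$; then $w^{-1}\circ\rho$ is a parallel transport operator which is an integral Hodge isometry mapping a Kähler class to a Kähler class, so by the Torelli theorem there exists $\sigma\in\Aut(\Sigma)$ with $\sigma^{*}=w^{-1}\circ\rho$. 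Since $w$ fixes $T(\Sigma)$ we still get $\sigma^{*}(\omega)=\lambda\omega$, so $\sigma$ is non-symplectic; this is the asserted relation between $\rho$ and $\sigma$ up to $W_{\Sigma}$. In the case relevant to us, where $\rho$ fixes $\Pic(\Sigma)$ pointwise, the ample class above is $\rho$-fixed, so $\rho(\mathcal{K}_{\Sigma})=\mathcal{K}_{\Sigma}$ already and $w$ may be taken to be the identity.

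The point that needs care is the positive-cone bookkeeping: one must guarantee that $\rho$ genuinely preserves a component of the positive cone (equivalently, that it is orientation-preserving on positive-definite subspaces, hence a parallel transport operator) and that the correcting element can be found inside $W_{\Sigma}$, i.e. among reflections in classes of $\Pic(\Sigma)$ — these are exactly the isometries that do not move $\omega$. This is precisely where the two hypotheses enter: $\lambda\neq 1$ confines the invariant part of $\rho$ to $\Pic(\Sigma)$, and the assumption that $\Pic(\Sigma)$ is $\rho$-fixed provides the $\rho$-invariant ample class needed to control the component. Once these are in place, everything reduces to a single application of the Torelli theorem for $K3$ surfaces.
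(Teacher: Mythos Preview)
Your argument is correct and takes a different, more self-contained route than the paper. The paper gives no independent proof: it simply declares the proposition an immediate corollary of Namikawa's theorem (quoted immediately after as Theorem~\ref{thm:namikawa}) and leaves the verification of its hypotheses implicit---under the assumption that $\rho$ fixes $\Pic(\Sigma)$ pointwise one has $\Lambda^{G}\supset\Pic(\Sigma)$, hence $S_{G,\Sigma}=(\Lambda^G)^\perp\cap\Pic(\Sigma)=0$, so Namikawa's condition (ii) is vacuous and (iii') follows from projectivity. You instead argue directly via the Torelli theorem for $K3$ surfaces. One remark: your second paragraph produces $\sigma^{*}=w^{-1}\rho$, which is not literally the conjugation $w\rho w^{-1}$ of the statement; this is harmless because, as your final paragraph observes, under the stated hypothesis $\rho$ already preserves $\mathcal{K}_{\Sigma}$ and one may take $w=\mathrm{id}$ (and in any case $\rho$ commutes with every $w\in W_{\Sigma}$, since the two act nontrivially on the complementary summands $T(\Sigma)$ and $\Pic(\Sigma)$ respectively, so $w\rho w^{-1}=\rho$ regardless). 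Your approach has the virtue of making explicit exactly which fact does the work---$\rho$ fixes an ample, hence K\"ahler, class---whereas the paper treats Namikawa's criterion as a black box.
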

	This proposition is an immediate corollary of the following theorem by Namikawa.
	\begin{thm}[\cite{Namikawa} Theorem 3.10]\label{thm:namikawa}
		Let $\Sigma$ be a $K3$ surface and $G$ a finite subgroup of the group of isometries in $\Lambda=H^2(\Sigma,\mathbb{Z})$. Denote by $\omega$ the period of $\Sigma$, by $\Lambda^G$ the sublattice of elements in $\Lambda$ fixed by $G$ and set $S_{G,\Sigma}=(\Lambda^G)^{\perp}\cap \left\{\mathbb{C}\omega\right\}^\perp=(\Lambda^G)^{\perp}$ in $H^{1,1}_\mathbb{Z}(\Sigma)$. Then there exists an element $t$ in the Weyl subgroup of $\Sigma$, $W(\Sigma)$, such that $tGt^{-1}\subset \Aut(\Sigma)$ if and only if
		\begin{enumerate}[i)]
			\item $\mathbb{C}\omega$ is $G$-invariant;
			\item $S_{G,\Sigma}$ contains no element of length $-2$;
			\item if $\omega\in \Lambda^G$ then $S_{G,\Sigma}$ is either $0$ or nondegenerate
			and negative definite;
			\item[iii')] if $\omega\notin \Lambda^G$ then $\Lambda^G$ contains an
			element $a$ with $(a,a)>0.$
		\end{enumerate}
	\end{thm}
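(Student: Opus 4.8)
The plan is to deduce the theorem from two classical inputs: the strong Torelli theorem for $K3$ surfaces (the surface case of the Hodge-theoretic Torelli theorem recalled above), and the description of the Kähler cone $\mathcal{K}$ as a fundamental chamber for the action of the Weyl group $W(\Sigma)$ on the connected component $\mathcal{C}^+$ of the positive cone in $\NS(\Sigma)_{\mathbb{R}}$ containing the Kähler classes, $W(\Sigma)$ being generated by reflections in the classes of $(-2)$-curves and acting simply transitively on the chambers of $\mathcal{C}^+$. The key reformulation is that every $w\in W(\Sigma)$ is a product of reflections in algebraic classes, hence acts trivially on $\mathbb{C}\omega$; so under condition i) every $wgw^{-1}$ is a Hodge isometry, and by Torelli it is induced by an automorphism of $\Sigma$ precisely when it preserves $\mathcal{K}$. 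Therefore finding $w$ with $wGw^{-1}\subset\Aut(\Sigma)$ is equivalent to finding a single Weyl chamber $\mathcal{C}=w^{-1}(\mathcal{K})$ invariant under all of $G$, and the whole proof reduces to producing such a $G$-invariant chamber.

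For sufficiency I would first use conditions iii) and iii') to produce a $G$-invariant class $a\in\NS(\Sigma)_{\mathbb{R}}$ with $(a,a)>0$. When $\omega\notin\Lambda^G$ this is immediate from iii'): if $g(\omega)=\lambda\omega$ with $\lambda\neq 1$, then a $G$-fixed $a$ satisfies $(a,\omega)=\lambda(a,\omega)$, so $a\perp\omega,\bar\omega$ and $a$ is algebraic. When $\omega\in\Lambda^G$, condition iii) forces $S_{G,\Sigma}$ to be negative definite, so in the orthogonal decomposition $\NS(\Sigma)_{\mathbb{R}}=(\NS(\Sigma)_{\mathbb{R}})^G\perp (S_{G,\Sigma})_{\mathbb{R}}$ the unique positive direction of $\NS(\Sigma)$ lies in the invariant part, again yielding a positive invariant class. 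Replacing $a$ by $-a$ if needed, I arrange $a\in\mathcal{C}^+$; since $ga=a$, the finite group $G$ then preserves the component $\mathcal{C}^+$. Next I would use condition ii): a $(-2)$-class $\delta\in\NS(\Sigma)$ whose wall $\delta^{\perp}$ contained the whole invariant subspace $(\NS(\Sigma)_{\mathbb{R}})^G$ would satisfy $\delta\perp\Lambda^G$, i.e. $\delta\in S_{G,\Sigma}$, which ii) forbids. Hence no wall contains $(\NS(\Sigma)_{\mathbb{R}})^G$, the walls meet the invariant positive cone in proper, locally finite subspaces, and I can choose a $G$-invariant $x\in(\NS(\Sigma)_{\mathbb{R}})^G\cap\mathcal{C}^+$ with $(x,\delta)\neq 0$ for every $(-2)$-class $\delta$. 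Such an $x$ lies in the interior of a unique chamber $\mathcal{C}$; as $gx=x$ and $G$ permutes the chambers of $\mathcal{C}^+$, each $g$ fixes $\mathcal{C}$. Conjugating by the unique $w\in W(\Sigma)$ with $w(\mathcal{C})=\mathcal{K}$ and applying Torelli finishes this direction.

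For necessity, assume $wGw^{-1}\subset\Aut(\Sigma)$. Each automorphism acts as a Hodge isometry fixing $\mathbb{C}\omega$, and since $w$ fixes $\omega$ this gives i). For ii), after replacing $\delta$ by $-\delta$ an effective $(-2)$-class in $S_{G,\Sigma}$ would have all its $G$-translates effective (they are permuted by automorphisms preserving effectivity), while $\delta\in(\Lambda^G)^{\perp}$ forces the $G$-average $\sum_{g\in G}g\delta$ to vanish, contradicting that a nonempty sum of effective classes pairs positively with an ample class and is therefore nonzero. For iii'), averaging an ample class over $G$ produces a $G$-invariant ample, hence positive, class in $\Lambda^G$; for iii), when $\omega\in\Lambda^G$ all elements are symplectic and $S_{G,\Sigma}$, being orthogonal both to the positive plane spanned by $\omega,\bar\omega$ and to an invariant ample class, must be negative definite.

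The main obstacle is the sufficiency direction, and specifically the interplay of the three hypotheses: condition ii) is exactly what prevents every $G$-invariant vector from lying on a common wall (which would make no chamber invariant), whereas iii) and iii') are exactly what guarantee the invariant positive cone is nonempty and that $G$ does not interchange the two components of the positive cone. The delicate points to get right are the orientation and component bookkeeping for $\mathcal{C}^+$, and the local finiteness of the wall arrangement restricted to the invariant subspace, which together ensure the chosen $x$ genuinely lands in a chamber interior; once a $G$-invariant chamber is in hand, the passage to an honest automorphism is a direct application of the Torelli theorem.
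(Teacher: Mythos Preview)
The paper does not prove this statement: it is quoted from \cite{Namikawa} as Theorem~3.10 and used without argument as input to Proposition~\ref{Prop:Namikawa}. There is therefore no proof in the paper to compare your proposal against.

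That said, your argument is the standard one and is essentially correct. Two minor points deserve care. In the necessity of ii), it is $wGw^{-1}$ rather than $G$ that lies in $\Aut(\Sigma)$, so the effectivity--averaging argument should be applied to $w\delta\in(\Lambda^{wGw^{-1}})^\perp$ and its orbit under $wGw^{-1}$; after this adjustment your contradiction goes through unchanged. In the necessity of iii), the symplectic case does not force $\Sigma$ to be projective, so you should average a K\"ahler class rather than an ample one; since $w$ is a product of reflections in algebraic classes it preserves $H^{1,1}$ and fixes $\omega$, so $w^{-1}$ of a $wGw^{-1}$-invariant K\"ahler class is still $G$-invariant, of positive square, and orthogonal to $\omega,\bar\omega$, which is all you need to force $S_{G,\Sigma}$ into the negative-definite part of the signature. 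With these two tweaks the proof is complete.
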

	Therefore if we consider the natural automorphism on $\Sigma^{[2]}$ induced by $\sigma$ then we see that the former variety is equipped with an automorphism whose action in cohomology is conjugate to $\rho_0$.
	\begin{thm}\label{thm:teoremino}
		Let $C$ be a nodal cubic and $\hat{\Sigma}, \, \theta$ as in \Cref{sec:cubiche}. Let $(\hat{\Sigma}^{[2]}, \eta)$ the Hilbert square of $(\hat{\Sigma}, \Tilde{\eta})$ and suppose that $\Pic(\hat{\Sigma}^{[2]})\simeq j(\theta)\oplus W\simeq\overline{\langle 6\rangle \oplus R_0}=T_0$. If the action induced by $\rho$ on the discriminant group $D_W$ is trivial then the isometry $\id_{T_0}\oplus\rho_{|S_0} \in O(T_0)\oplus O(S_0)$ lifts to an isometry $\rho_0\in O(L)$. Finally, if we define $K(T_0)$ as the chamber containing a Kähler class of $\Sigma^{[2]}$, then the latter is $K(T_0)$-general and defines a point in $\mathcal{M}^{\rho_0, \zeta}_{K(T_0)}$.
	\end{thm}
	\begin{proof}
		In order to prove the statement it is sufficient to prove that the isometry $\text{id}_{T_0}\oplus\rho_{|S_0} \in O(T_0)\oplus O(S_0)$ lifts to an isometry $\rho_0\in O(L)$. In order to lift it the condition stated in \Cref{teonik} is that the action induced by $\rho$ on the discriminant group $D_{S_0}$ is trivial. This discriminant group is isomorphic to a quotient of $D_{W}\oplus D_{S(-1)}$, but the action of $\rho$ is trivial on $D_W$ by hypothesis, therefore it is enough to check the triviality on $D_{S(-1)}$. Again $D_{S(-1)}$ is isomorphic to a quotient of $D_L\oplus D_{\langle 6\rangle}$, the action induced by $\rho$ is trivial on the first factor because it is an order three isometry on $D_L\simeq \mathbb{Z}/2\mathbb{Z}$ and on the second by construction. The $K(T_0)$-generality of $\hat{\Sigma}^{[2]}$ follows from \cite[Lemma 5.2]{BCS_complex}.
	\end{proof}
	\begin{rmk}
		In this section we provided also an automorphism on $\hat{\Sigma}$ whose action on cohomology is conjugate to $\tilde{\rho_0}$. Therefore, we proved that, under the same hypotheses of \Cref{thm:teoremino}, $(\hat{\Sigma}, \Tilde{\eta})\in\mathcal{K}^{\tilde{\rho_0}, \zeta}_{T_{K3}}$, the moduli space of $(\tilde{\rho_0}, \tilde{j})$-polarized $K3$ surfaces.
	\end{rmk}
	In the next sections we analyze case by case what happens. In order to perform this analysis we define $\Delta_3^{A_i}$ for $i=1, \dots, 4$ as the biggest sub-locus of the space of stable cubic threefolds where the generic element is a cubic with an isolated singularity of type $A_i$.
	\begin{rmk}
		One can check (an explicit computation is done in \Cref{appendix}) that the dimension of the $\Delta_3^{A_i}$ locus is $10-i$.
	\end{rmk}
	\begin{notation}
		In the following sections every time an IHS manifold endowed with a $(\rho, j)$-polarization will appear we will use the following notations. The definition of the isometry playing the role of $\rho$ will be clear in all cases. The embedding $j$ will be constructed in the same way each time as follows. We will exhibit a square six class $\theta$ which has an embedding in $L$ as in \Cref{sec:moduli}. This embedding will induce an embedding of the whole fixed lattice in $L$ playing the role of $j$. Moreover, we will choose for any Hilbert square over a $K3$ surface the natural marking induced by a marking on a $K3$ surface, i.e. fixing a marking $\tilde{\eta}$ on a $K3$ surface $\hat{\Sigma}$ the natural marking $\eta$ on its Hilbert square $\hat{\Sigma}^{[2]}$ induced by $\tilde{\eta}$ is the lifting of $\tilde{\eta}\oplus\text{id}_{\langle -2\rangle}$ to the whole second cohomology group with integer coefficients. Then we will take the embedding $\iota$ in a way compatible with $j$, i.e. $\iota(\langle 6\rangle)= 2\theta_{K3}-3\epsilon$ with $\theta_{K3}$ fixed and this embedding will induce also an embedding on its orthogonal. In order to lighten the notation and the exposition we will often omit the markings where their presence is obvious.
	\end{notation}
 \section{Singularity of type $A_1$}\label{sec:A1}
	In this section we recall the results discussed in \cite[Section 4.3]{BCS} where the authors provide a birationality result for the nodal hyperplane. In order to see the analogy with the loci we are interested in we briefly review it giving a proof that fits our framework.\newline

	With the notation of \Cref{sec:cubiche} let $p_0$ be an isolated singularity of type $A_1$ for $C\subset\mathbb{P}^4$. This implies, for reasons of corank of the singularity (see \cite[Section 2]{AllcockSing} or directly do the computation), that $f_2$ is a rank 4 quadratic form and $Y\subset\mathbb{P}^5$ has an isolated singularity of type $A_2$. Moreover, by genericity, we can assume that the surface $\Sigma$ given by the following equations:
	\begin{equation}
		\begin{cases}
			f_2(x_1,x_2,x_3,x_4)=0 \\
			f_3(x_1,x_2,x_3,x_4)+x_5^3=0.
		\end{cases}
	\end{equation}
	is a smooth $K3$ surface. Here $x_1, \dots, x_5$ are the homogeneous coordinates on the hyperplane $H\subset \mathbb{P}^5$ given by $\{x_0=0\}$. The covering automorphism $\sigma$ on $Y$ induces an automorphism $\tau$ of $\Sigma$. Explicitly, $\tau$ is given by $x_5\mapsto\zeta_3\cdot x_5$ and the identity on the other coordinates. The fixed locus of $\tau$ is a curve of genus 4. In \cite{artebanisarti} the authors show that the generic case (i.e. the one which we are considering) has $\Pic(\Sigma)\simeq U(3)$. Therefore for the Hilbert square of $\Sigma$ it holds:
	\begin{equation*}
		\Pic(\Sigma^{[2]})\simeq \Pic(\Sigma)\oplus\langle -2\rangle\simeq U(3)\oplus\langle -2\rangle\simeq \langle 6\rangle\oplus A_2(-1):= T_0.
	\end{equation*}
	Note that the last isometry is given by:
	\begin{equation*}
		U(3)\oplus\langle -2\rangle:=\langle u_1,u_2\rangle\oplus\langle \epsilon\rangle\simeq\langle 2(u_1+u_2)-3\epsilon\rangle\oplus\langle u_1-\epsilon, \epsilon-u_2\rangle\simeq j(\theta)\oplus W.
	\end{equation*}
	This proves \Cref{claim}. Moreover, $\tau$ induces an order three isometry $\rho\in O(L)$ with $j(\theta)$ as fixed sublattice. So, $\rho$ restricts to an order three isometry without fixed points of $W\simeq A_2(-1)$. There exists only one isometry of $A_2(-1)$ of order three without fixed points modulo conjugation and its action on the discriminant $D_{A_2(-1)}$ is trivial. Therefore, applying \Cref{thm:teoremino} there exists $\rho_0\in O(L)$ lifting $\id_{|T_0}\oplus\rho_{|S_0}$ and $(\Sigma^{[2]}, \eta, \tau)\in \mathcal{M}^{\rho_0, \zeta}_{K(T_0)}$. \begin{prop}[ \protect{\cite[Proposition~4.6]{BCS}}]\label{prop:A1}
		Let $R_0=\Span(\delta_1, \rho(\delta_1))\simeq A_2(-1)$ be the degeneracy lattice relative to a period $\omega$ of a generic cubic threefold having a single singularity of type $A_1$. Then the $A_1$ locus $\Delta^{A_1}_3$ is birational to the moduli space $\mathcal{N}^{\rho_0, \zeta}_{K(T_0)}$.
	\end{prop}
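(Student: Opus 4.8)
The plan is to exhibit an explicit birational map between $\Delta_3^{A_1}$ and $\mathcal{N}^{\rho_0,\zeta}_{K(T_0)}$ by composing the period map $\mathcal{P}_{\langle 6\rangle}^{\rho,\zeta}$ for the full $10$-dimensional ball of $K3^{[2]}$-type fourfolds with a non-symplectic order-three automorphism and the period map for nodal cubics of \cite{ACT}, then restricting. First I would use the extension of the period map for cubic threefolds to the stable discriminant, as described in the Introduction: a generic element of $\Delta_3^{A_1}$ has a well-defined period $\omega$ lying in $\mathcal{H}_\Delta\subset\Omega^{\rho,\zeta}_{\langle 6\rangle}$, and by construction this period equals the period of the $K3$ surface $\Sigma$ attached to $C$ in Section \ref{Sec:Nodal} (via the identification $H^2_\circ(F(Y),\mathbb{Z})\simeq S(-1)$ coming from \cite{ACT}). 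The discriminant $\mathcal{H}_\Delta$ decomposes, up to the action of $\Gamma^{\rho,\zeta}_{\langle 6\rangle}$, according to the number $n$ of MBM classes orthogonal to $\omega$, and the generic stratum of $\Delta_3^{A_1}$ corresponds precisely to $n=1$, i.e.\ to degeneracy lattice $R_0=\Span(\delta_1,\rho(\delta_1))\simeq A_2(-1)$ as in Remark \ref{rmk: roots}.

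Next I would invoke the constructions already set up above. By the discussion preceding Proposition \ref{prop:A1}, $\Sigma$ is a smooth $K3$ surface with $\Pic(\Sigma)\simeq U(3)$, so $\Pic(\Sigma^{[2]})\simeq T_0$, which establishes Claim \ref{claim} in this case; the covering automorphism induces $\rho\in O(L)$ with invariant lattice $j(\theta)$ and restricting to the unique (up to conjugacy) fixed-point-free order-three isometry of $W\simeq A_2(-1)$, whose action on $D_{A_2(-1)}$ is trivial. Theorem \ref{thm:teoremino} then produces the lift $\rho_0\in O(L)$ of $\id_{T_0}\oplus\rho_{|S_0}$ and shows $(\Sigma^{[2]},\eta,\tau)$ is $K(T_0)$-general, hence defines a point of $\mathcal{M}^{\rho_0,\zeta}_{K(T_0)}$, and its class lies in $\mathcal{N}^{\rho_0,\zeta}_{K(T_0)}$. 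Thus I have a well-defined assignment $C\mapsto [\Sigma^{[2]},\eta,\tau]$ on the generic locus of $\Delta_3^{A_1}$. Since $\mathcal{P}^{\rho_0,\zeta}_{K(T_0)}$ is an isomorphism onto $\left(\Omega^{\rho_0,\zeta}_{T_0}\setminus(\mathcal{H}_{T_0}\cup\mathcal{H}'_{T_0})\right)/\Gamma^{\rho_0,\zeta}_{T_0}$ by \eqref{eq: Moduli}, it is enough to match the period domains: I would check that $\Omega^{\rho_0,\zeta}_{T_0}$ is naturally identified with the sublocus of $\Omega^{\rho,\zeta}_{\langle 6\rangle}$ cut out by orthogonality to $\delta_1$ (and hence to $\rho(\delta_1)$), i.e.\ with a codimension-one ball $\mathbb{C}B^9$, and that the arithmetic groups match up to finite index after removing the further hyperplane arrangements $\mathcal{H}_{T_0}\cup\mathcal{H}'_{T_0}$, which correspond to further degenerations inside $\Delta_3^{A_1}$ (the smaller strata $\Delta_3^{A_i}$, $i\geq 2$, and chordal-type loci). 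This dimension count — $10-1=9$ on both sides — together with the two periods being related by the ACT comparison, gives a birational map.

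The main obstacle I expect is the bookkeeping of the quotient groups: one must verify that the monodromy-type group $\Gamma^{\rho_0,\zeta}_{T_0}$ appearing on the hyperkähler side is commensurable with (indeed, equals up to finite index) the group acting on the corresponding ball slice inside $\Gamma^{\rho,\zeta}_{\langle 6\rangle}$ governing the period map for cubic threefolds along the discriminant, so that the two quotients are birational rather than merely generically finite over each other. This is where one must be careful about the difference between $\mathrm{Mon}^2(T_0,\rho_0)$ and the full stabilizer, and about the distinction between $\Delta'(L)$-walls and $\Delta(L)$-walls; the cleanest route is to follow \cite[Section 4.3]{BCS} verbatim, since they carry out exactly this comparison for the nodal hyperplane, and to note that all our data ($T_0$, $\rho_0$, the marking conventions from the Notation) were chosen to coincide with theirs. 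Once the group comparison is in place, the birationality is immediate: both spaces are, generically, quotients of $\mathbb{C}B^9$ by the same lattice, and the explicit maps above identify a dense open subset of one with a dense open subset of the other.
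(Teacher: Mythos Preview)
Your proposal follows essentially the same strategy as the paper's proof: extend the cubic-threefold period map to the nodal locus via the associated $K3$, use the constructions preceding the proposition (Claim~\ref{claim}, Theorem~\ref{thm:teoremino}) to land in $\mathcal{M}^{\rho_0,\zeta}_{K(T_0)}$, and then compare period domains and arithmetic groups. The one substantive difference is in how the group comparison is handled.

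You write that the arithmetic groups should match ``up to finite index'' and that commensurability will suffice. This is not enough: if $\Gamma_1\subset\Gamma_2$ has finite index greater than one, the induced map of ball quotients is generically finite but not birational. The paper avoids this issue by routing the comparison through the $K3$ side rather than trying to compare directly with the stabilizer inside $\Gamma^{\rho,\zeta}_{\langle 6\rangle}$. Concretely, it introduces the moduli space $\mathcal{K}^{\tilde{\rho_0},\zeta}_{U(3)}$ of $K3$ surfaces with the induced automorphism $\tilde{\rho_0}=\rho_0|_{T_{K3}}$, uses the Dolgachev--Kond\={o} period isomorphism for that space, and then checks directly from the definitions that
\[
\Omega^{\rho_0,\zeta}_{T_0}=\Omega^{\tilde{\rho_0},\zeta}_{U(3)},\qquad \Gamma^{\rho_0,\zeta}_{T_0}=\Gamma^{\tilde{\rho_0},\zeta}_{U(3)},\qquad \mathcal{H}_{T_0}=\mathcal{H}_{U(3)}
\]
(the last equality uses that $S_0\subset L_{K3}$ sits inside the unimodular $K3$ lattice, so there are no extra $(-10)$-walls). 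These are genuine equalities, not finite-index statements, and they make the two ball quotients birational immediately. The final step is the same as yours: \cite{ACT} shows the discriminant locus maps isomorphically onto the nodal hyperplane arrangement, so $\Delta_3^{A_1}$ is birational to $\mathcal{K}^{\tilde{\rho_0},\zeta}_{U(3)}$, and the chain of birational maps closes.

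In short, your outline is correct and matches the paper's, but where you flag the group comparison as the ``main obstacle'' and defer to \cite{BCS}, the paper actually carries it out, and the device that makes it clean is the detour through the $K3$ moduli space rather than a direct comparison inside the ten-dimensional ball.
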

	\begin{proof}
		The idea is to use the same structure of the proof of \cite[Proposition 4.6]{BCS} within our theoretical approach. First we note that the extension of the period map $\mathcal{P}^3:\mathcal{C}^{\text{sm}}_3\to\frac{\mathbb{B}^{10}\setminus\left(\mathcal{H}_n\cup \mathcal{H}_c\right)}{\mathbb{P}\Gamma}$ to the nodal locus is done by \cite[Section 6]{ACT} defining its period as the period of its associated $K3$ surface. In our case the generic $A_1$ nodal cubic has the period 
		\begin{equation*}
			\mathcal{P}^3(C):= \mathcal{P}^{\tilde{\rho_0}, \zeta}_{U(3)}((\Sigma, \eta)).
		\end{equation*}
		The latter period map is defined by taking $\mathcal{K}^{\tilde{\rho_0}, \zeta}_{U(3)}$ as the moduli space of lattice polarized $K3$ surfaces with a non-symplectic automorphism of order three whose action on cohomology is conjugate to $\tilde{\rho_0}$. Following \cite{DolgachevKondo} this space comes with a period map 
		\begin{equation*}
			\mathcal{P}^{\tilde{\rho_0}, \zeta}_{U(3)}:\mathcal{K}^{\tilde{\rho_0}, \zeta}_{U(3)}\to\Omega^{\tilde{\rho_0}, \zeta}_{U(3)}:=\left\{x\in \mathbb{P}((S_0)_\zeta)\mid h_{S_0}(x,x)>0\right\}
		\end{equation*}
		which induces a bijection 
		\begin{equation*}
			\mathcal{P}^{\tilde{\rho_0}, \zeta}_{U(3)}:\mathcal{K}^{\tilde{\rho_0}, \zeta}_{U(3)}\to\frac{\Omega^{\tilde{\rho_0}, \zeta}_{U(3)}\setminus\mathcal{H}_{U(3)}}{\Gamma^{\tilde{\rho_0}, \zeta}_{U(3)}}
		\end{equation*}
		where we denote with $$\mathcal{H}_{U(3)}:=\bigcup\limits_{\mu\in S_0, \ \mu^2=-2} \mu^{\perp}\cap\Omega^{\tilde{\rho_0}, \zeta}_{U(3)}$$ and with $$\Gamma^{\tilde{\rho_0}, \zeta}_{U(3)}:=\left\{\gamma\in O(L_{K3})\mid \gamma\circ\tilde{\rho_0}=\tilde{\rho_0}\circ\gamma\right\}.$$ By definition we find the following equality:
		\begin{equation*}
			\frac{\Omega^{\tilde{\rho_0}, \zeta}_{U(3)}}{\Gamma^{\tilde{\rho_0}, \zeta}_{U(3)}}=\frac{\Omega^{\rho_0, \zeta}_S\cap \delta_1^{\perp}}{\Gamma^{\rho_0, \zeta}_S}.
		\end{equation*}
		As proven above $(\Sigma^{[2]}, \eta, \tau)$ defines a point in $\mathcal{M}^{\rho_0, \zeta}_{K(T_0)}$. The period map in this space, following equation (\Cref{eq: Moduli}), descends to a bijection
		\begin{equation*}
			\mathcal{P}^{\rho_0, \zeta}_{T_0}:\mathcal{N}^{\rho_0, \zeta}_{K(T_0)}=\frac{\mathcal{M}^{\rho_0, \zeta}_{K(T_0)}}{\text{Mon}^2(T_0, \rho_0)}\to\frac{\Omega^{\rho_0, \zeta}_{T_0}\setminus\left(\mathcal{H}_{T_0} \cup \mathcal{H}^{'}_{T_0} \right)}{\Gamma^{\rho_0, \zeta}_{T_0}}.
		\end{equation*}
		By their definitions we see that $\Omega^{\rho_0, \zeta}_{T_0}=\Omega^{\tilde{\rho_0}, \zeta}_{U(3)}$ and $\Gamma^{\rho_0, \zeta}_{T_0}=\Gamma^{\tilde{\rho_0}, \zeta}_{U(3)}$. Moreover, as $S_0\subset L_{K3}$ is a sublattice of the unimodular $K3$ lattice the following holds: $$\mathcal{H}_{T_0}:=\bigcup\limits_{\mu\in S_0, \ \mu^2=-2} \mu^{\perp}\cap\Omega^{\rho_0, \zeta}_{T_0}=\mathcal{H}_{U(3)}.$$  Now it is easy to see that $\frac{\Omega^{\rho_0, \zeta}_{T_0}\setminus\left(\mathcal{H}_{T_0} \cup \mathcal{H}^{'}_{T_0} \right)}{\Gamma^{\rho_0, \zeta}_{T_0}}$ is birational to $\frac{\Omega^{\tilde{\rho_0}, \zeta}_{U(3)}\setminus\mathcal{H}_{U(3)}}{\Gamma^{\tilde{\rho_0}, \zeta}_{U(3)}}$. So in order to conclude the proof it is enough to show that $\Delta^{A_1}_3$ is birational to $\mathcal{K}^{\tilde{\rho_0}, \zeta}_{U(3)}$ as the claim of the proposition would follow through a composition of birational morphisms. But this is true as in \cite{ACT} the authors show that the discriminant locus maps isomorphically to its image, the nodal hyperplane arrangement, through the period map, therefore a generic point in $\Delta^{A_1}_3$ is mapped isomorphically to the period of a generic $K3$ surface in $\mathcal{K}^{\tilde{\rho_0}, \zeta}_{U(3)}$ and by \cite{DolgachevKondo} this is an isomorphism. 
	\end{proof}
	\section{Singularity of type $A_3$ and $A_4$}\label{sec:A3A4}
	In this section we prove \Cref{thm:main} for $\Delta_3^{A_3}$ and $\Delta_3^{A_4}$. The cases treated in this section are very similar to the $A_1$ case; we will keep the same notation.\newline
    
	Let $p_0$ be a singularity of type $A_k$ with $k=3,4$ for $C\subset\mathbb{P}^4$. Remember that if $k\geq 2$ the singularities have corank 1, therefore this translates in $f_2$ being a rank 3 quadratic form.
	
	\subsection{Singularity of type $A_3$}
	If $C$ has an singularity of type $A_3$ then with a standard computation (just add a cube in the new variable) one shows that $Y$ has an singularity of type $E_6$, thus, this time $\Sigma$ has one isolated singularity of type $A_5$, call it $p\in\Sigma$. Let us consider $\hat{\Sigma}$, the minimal resolution of $\Sigma$ which is a sequence of blow ups of $\Sigma$ at $p$. This is a $K3$ surface. The covering automorphism $\sigma$ on $Y$ descends to an automorphism $\tau$ on $\Sigma$. So, the singular point $p$ is a fixed point for the automorphism $\tau$. As the locus we are blowing up each time is fixed by the automorphism there exists a unique lift $\hat{\tau}$ such that $\hat{\tau}$ is an automorphism of $\hat{\Sigma}$ and commutes with the resolution map $\beta$. The fixed locus for $\hat{\tau}$ consists of two curves and two points. Stegmann in her PhD thesis gives a detailed description of surfaces which are complete (2,3)-intersection in $\mathbb{P}^4$ and from \cite[Proposition 6.3.10]{Stegmann} we find that the Picard lattice of $\hat{\Sigma}$ is just the span of $\langle C_1, E_1, E_2, E_3, E_4, E_5\rangle\subset \Pic(\hat{\Sigma})$ with the following Gram matrix: \begin{equation*}
		\begin{pmatrix*}[r]
			0 &  0 &  0 &  1 & 0 & 0\\
			0 & -2 &  1 &  0 & 0 & 0\\
			0 &  1 & -2 &  1 & 0 & 0\\
			1 &  0 &  1 & -2 & 1 & 0\\
			0 &  0 &  0 &  1 &-2 & 1\\
			0 &  0 &  0 &  0 & 1 & -2
		\end{pmatrix*}
	\end{equation*}
	Geometrically, the divisors $E_1, \dots, E_5$ are the $(-2)$-curves coming from the successive blow ups of $\Sigma$. By genericity the Picard lattice has rank 6 (see \Cref{appendix} for calculations), therefore $$\langle C_1, E_1, E_2, E_3, E_4, E_5\rangle= \Pic(\hat{\Sigma}).$$
	As the fixed locus of $\hat{\tau}$ consists of two curves and two isolated points, we deduce, using \cite[Table 2]{artebanisarti}, that the Picard lattice admits an embedding of the invariant sublattice $U\oplus A_2(-1)^{\oplus 2}$. Moreover, this is an isomorphism of lattices. In fact, $\langle C_1+E_3, C_1\rangle \oplus \langle E_1, E_2-C_1\rangle \oplus \langle E_4-C_1, E_5\rangle\simeq U\oplus A_2(-1)^{\oplus 2}$ exhibits $U\oplus A_2(-1)^{\oplus 2}$ as a primitive sublattice of the Picard lattice. Now we can compute the determinants and notice that they are both 9, so the lattices are isomorphic. Moreover the automorphism $\hat{\tau}$ is clearly non-symplectic (as it is the multiplication by a third root of unity $\zeta_3$ of the last coordinate).
	Now we investigate some properties of $\hat{\Sigma}^{[2]}$. Its Picard lattice is $$\Pic(\hat{\Sigma}^{[2]})\simeq\Pic(\hat{\Sigma})\oplus \langle -2\rangle\simeq  U\oplus A_2(-1)^{\oplus 2}\oplus\langle -2\rangle\simeq E_6(-1)\oplus \langle 6 \rangle:=T_0$$ where the last isomorphism is given by \begin{equation*}
		\begin{split}
			&\Pic(\hat{\Sigma})\oplus \langle -2\rangle \simeq \\ &\simeq \langle E_1, E_2, E_3, E_4, E_5, C_1- \epsilon\rangle \oplus \langle 2(2C_1+E_1+2E_2+3E_3+2E_4+E_5)- 3\epsilon\rangle
		\end{split}
	\end{equation*} where $\epsilon$ is the $(-2)$-class which is half of the divisor introduced by the Hilbert--Chow morphism. It is useful to describe also the trascendental lattice $\TR(\hat{\Sigma}^{[2]})$ of $\hat{\Sigma}^{[2]}$.
	\begin{equation*}
		\TR(\hat{\Sigma}^{[2]})\simeq \TR(\hat{\Sigma})\simeq U^{\oplus 2}\oplus E_8(-1)\oplus A_2(-1)^{\oplus 2}
	\end{equation*}
	First we note that $\Pic(\hat{\Sigma}^{[2]})\simeq E_6(-1)\oplus \langle 6 \rangle\simeq W\oplus j(\theta)$ with $\theta=2\theta_{K3}-3\epsilon$ and again we assume  $\theta_{K3}=3u_1+u_2$ with an isometry so with another choice of the marking. This proves \Cref{claim}. Again we can associate to $\hat{\tau}$ an order three isometry $\rho\in O(L)$ with $j(\theta)$ as fixed sublattice with \Cref{Prop:Namikawa}. Therefore it restricts to an order three isometry without fixed points on $W\simeq E_6(-1)$. As said in \Cref{isomE_6} there exists only one, up to conjugation and sign, order three isometry without fixed points on $E_6$. We can write it explicitly. Let $E_6=\langle e_1, \dots, e_6\rangle$ with the standard Bourbaki numeration
	\begin{equation*}
		\begin{tikzcd}
			e_1 \ar[r, dash] & e_3 \ar[r, dash]& e_4 \ar[r, dash] & e_5 \ar[r, dash] & e_6 \\
			&& e_2 \ar[u, dash] &&
		\end{tikzcd}
	\end{equation*}
	then $\rho$ is given by
	\begin{align*}
		e_1 &\mapsto -e_1-e_3-e_4-e_5-e_6 \\
		e_2 &\mapsto e_3+e_4+e_5 \\
		e_3 &\mapsto-e_2-e_3-e_4 \\
		e_4 &\mapsto -e_4-e_5 \\
		e_5 &\mapsto e_4 \\
		e_6 &\mapsto e_1+e_2+ e_3+e_4+e_5.
	\end{align*}
	Now, $\mathbb{Z}/3\mathbb{Z}\simeq D_W=\langle-\frac{4}{3}e_1-e_2-\frac{5}{3}e_3-2e_4-\frac{4}{3}e_5-\frac{2}{3}e_6\rangle$ and therefore after substituting the expression of $\rho$ on the generator of $D_W$ we note that the action induced by $\rho$ is trivial on $D_W$. Finally we can apply \Cref{thm:teoremino} to deduce also in this case that there exists $\rho_0\in O(L)$ lifting $\id_{|T_0}\oplus\rho_{|S_0}$ and  $(\hat{\Sigma}^{[2]}, \eta, \hat{\tau})\in\mathcal{M}^{\rho_0, \zeta}_{K(T_0)}$.
	\begin{prop}\label{prop:A3}
		Let $R_0=\Span(\delta_1, \delta_2, \delta_3, \rho(\delta_1), \rho(\delta_2), \rho(\delta_3))\simeq E_6(-1)$ be the degeneration lattice relative to a period $\omega$ of a generic cubic threefold having one singularity of type $A_3$. Then the $A_3$ locus $\Delta^{A_3}_3$ is birational to the moduli space $\mathcal{N}^{\rho_0, \zeta}_{K(T_0)}$.
	\end{prop}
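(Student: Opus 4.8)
The plan is to mirror the proof of Proposition~\ref{prop:A1}, replacing $A_2(-1)$ and the single wall divisor $\delta_1$ by $E_6(-1)$ and the triple $\delta_1,\delta_2,\delta_3$, and to produce a chain of birational maps
\[
	\Delta^{A_3}_3 \ \dashrightarrow\ \mathcal{K}^{\tilde{\rho_0},\zeta}_{T_{K3}} \ \dashrightarrow\ \mathcal{N}^{\rho_0,\zeta}_{K(T_0)},\qquad T_{K3}:=U\oplus A_2(-1)^{\oplus 2}\simeq\Pic(\hat{\Sigma}),
\]
whose composition is the asserted birationality. For the first map I would invoke~\cite{ACT}: the extended period map sends a generic $C\in\Delta^{A_3}_3$ to the period of its associated $K3$ surface, which by the discussion of this section equals $\mathcal{P}^{\tilde{\rho_0},\zeta}_{T_{K3}}\big((\hat{\Sigma},\eta)\big)$ for the non-symplectic automorphism $\hat{\tau}$ whose cohomological action is conjugate to $\tilde{\rho_0}$; following~\cite{DolgachevKondo} the $K3$ period map identifies $\mathcal{K}^{\tilde{\rho_0},\zeta}_{T_{K3}}$ with $\frac{\Omega^{\tilde{\rho_0},\zeta}_{T_{K3}}\setminus\mathcal{H}_{T_{K3}}}{\Gamma^{\tilde{\rho_0},\zeta}_{T_{K3}}}$, where $\mathcal{H}_{T_{K3}}=\bigcup_{\mu\in S_0,\ \mu^2=-2}\mu^{\perp}\cap\Omega^{\tilde{\rho_0},\zeta}_{T_{K3}}$ and $\Gamma^{\tilde{\rho_0},\zeta}_{T_{K3}}$ is the commutant of $\tilde{\rho_0}$ in $O(L_{K3})$. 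Since in~\cite{ACT} the discriminant sublocus maps isomorphically onto the relevant slice of the nodal hyperplane arrangement, a generic $C\in\Delta^{A_3}_3$ is sent to the period of a generic member of $\mathcal{K}^{\tilde{\rho_0},\zeta}_{T_{K3}}$, which gives the first map; here one also uses $\frac{\Omega^{\tilde{\rho_0},\zeta}_{T_{K3}}}{\Gamma^{\tilde{\rho_0},\zeta}_{T_{K3}}}=\frac{\Omega^{\rho_0,\zeta}_S\cap\delta_1^{\perp}\cap\delta_2^{\perp}\cap\delta_3^{\perp}}{\Gamma^{\rho_0,\zeta}_S}$, recording that the $E_6(-1)$ degeneration locus is cut out inside the smooth-cubic domain by the orthogonals of the three wall divisors.

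For the second map I would use that $(\hat{\Sigma}^{[2]},\eta,\hat{\tau})\in\mathcal{M}^{\rho_0,\zeta}_{K(T_0)}$, already established above via Theorem~\ref{thm:teoremino}, so that the period map of equation~(\ref{eq: Moduli}) descends to a bijection
\[
	\mathcal{P}^{\rho_0,\zeta}_{T_0}\colon\ \mathcal{N}^{\rho_0,\zeta}_{K(T_0)}\ \longrightarrow\ \frac{\Omega^{\rho_0,\zeta}_{T_0}\setminus\big(\mathcal{H}_{T_0}\cup\mathcal{H}'_{T_0}\big)}{\Gamma^{\rho_0,\zeta}_{T_0}}.
\]
Comparing definitions, $S_0=T_0^{\perp}$ in $L$ is contained in $\epsilon^{\perp}=L_{K3}$ (as $\epsilon\in T_0$) and coincides with the orthogonal complement of $T_{K3}$ in $L_{K3}$, so $\Omega^{\rho_0,\zeta}_{T_0}=\Omega^{\tilde{\rho_0},\zeta}_{T_{K3}}$ and $\Gamma^{\rho_0,\zeta}_{T_0}=\Gamma^{\tilde{\rho_0},\zeta}_{T_{K3}}$. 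The substantive point is $\mathcal{H}_{T_0}=\mathcal{H}_{T_{K3}}$: since $S_0$ is a primitive sublattice of the unimodular lattice $L_{K3}$ and $(\mu,\epsilon)=0$ for every $\mu\in S_0$, a primitive $\mu\in S_0$ has divisibility one in $L$, so by the numerical characterization of the wall divisors of $K3^{[2]}$-type recalled in Example~\ref{ex:fondante} the only ones lying in $S_0$ are the $(-2)$-classes. Hence, after discarding the proper closed subset $\mathcal{H}'_{T_0}$, the target of $\mathcal{P}^{\rho_0,\zeta}_{T_0}$ is birational to $\frac{\Omega^{\tilde{\rho_0},\zeta}_{T_{K3}}\setminus\mathcal{H}_{T_{K3}}}{\Gamma^{\tilde{\rho_0},\zeta}_{T_{K3}}}$, the target of the $K3$ period map; composing with the first map yields the statement.

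The lattice bookkeeping above is routine. The point that really needs care is the input from~\cite{ACT}: that the period of a cubic threefold with a single isolated $A_3$ singularity, defined through its associated $K3$ surface, is precisely the $\tilde{\rho_0}$-polarized period of the resolution $\hat{\Sigma}$ obtained by blowing up the $\hat{\tau}$-fixed point $p\in\Sigma$ — with $\TR(\hat{\Sigma})\simeq U^{\oplus 2}\oplus E_8(-1)\oplus A_2(-1)^{\oplus 2}$ carrying the Hodge structure of $C$ — together with the companion fact that $\Delta^{A_3}_3$ is mapped isomorphically onto the corresponding slice of the nodal hyperplane arrangement. Once this is granted, the remaining identifications are exactly those of the proof of Proposition~\ref{prop:A1}.
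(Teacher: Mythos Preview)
Your proposal is correct and follows exactly the approach the paper intends: the paper's own proof of this proposition consists of the single sentence ``It is easy to see that one can adjust the same proof of Proposition~\ref{prop:A1} to this case and everything works,'' and you have carried out precisely that adjustment, replacing $U(3)$ by $T_{K3}=U\oplus A_2(-1)^{\oplus 2}$, the single orthogonal $\delta_1^\perp$ by $\delta_1^\perp\cap\delta_2^\perp\cap\delta_3^\perp$, and $A_2(-1)$ by $E_6(-1)$. Your explicit divisibility argument for $\mathcal{H}_{T_0}=\mathcal{H}_{T_{K3}}$ and your flagging of the ACT input as the one point needing external care are both appropriate elaborations of what the paper leaves implicit.
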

	\begin{proof}
		It is easy to see that one can adjust the same proof of \Cref{prop:A1} to this case and everything works.
	\end{proof}
	\subsection{Singularity of type $A_4$}
	This section is analogous to the previous section so we omit the details. \newline
	If $C$ has now a singularity of type $A_4$ then $Y$ has a singularity of type $E_8$ and $\Sigma$ an isolated singularity of type $E_7$ which we will call $p\in\Sigma$. Stegmann's work \cite[Proposition 6.3.12]{Stegmann} and the computations in \Cref{appendix} provide us once again the description of its Picard lattice $\Pic(\hat{\Sigma})=\langle C_1, E_1, \dots E_7\rangle$ with the following Gram matrix: 
	\begin{equation*}
		\begin{pmatrix*}[r]
			0 &  1 &  0 &  0 & 0 & 0 & 0 & 0\\
			1 & -2 &  1 &  0 & 0 & 0 & 0 & 0\\
			0 &  1 & -2 &  1 & 0 & 0 & 0 & 0\\
			0 &  0 &  1 & -2 & 1 & 0 & 0 & 0\\
			0 &  0 &  0 &  1 &-2 & 1 & 0 & 1\\
			0 &  0 &  0 &  0 & 1 & -2& 1 & 0\\
			0 &  0 &  0 &  0 & 0 & 1 & -2& 0\\
			0 &  0 &  0 &  0 & 1 & 0 & 0 & -2
		\end{pmatrix*}
	\end{equation*}
	The fixed locus of the automorphism $\hat{\tau}$, induced by the cover automorphism, is just the union of three isolated points and three curves. Therefore we deduce from \cite{artebanisarti} that there exists an embedding of $U\oplus E_6(-1)$ in the Picard lattice which turns out to be an isometry. Indeed, it is immediate to see that $\Pic(\hat{\Sigma})\simeq\langle C_1, C_1+E_1\rangle\oplus\langle E_2-C, E_3, \dots E_7\rangle\simeq U\oplus E_6(-1)$ and they both have determinant 3.

	Now $\hat{\Sigma}^{[2]}$ has Picard lattice
	\begin{equation*}
		\Pic(\hat{\Sigma}^{[2]})\simeq\Pic(\hat{\Sigma})\oplus\langle -2\rangle\simeq U\oplus E_6(-1)\oplus\langle -2\rangle\simeq E_8(-1)\oplus\langle 6\rangle
	\end{equation*}
	where the last isometry is given by 
	\begin{equation*}
		\begin{split}
			\Pic(\hat{\Sigma})&\oplus \langle -2\rangle \simeq \langle E_1, \dots, E_7, C_1- \epsilon\rangle \oplus \\ &\oplus\langle 2(-2C_1+3E_1+4E_2+5E_3+6E_4+4E_5+2E_6+3E_7)- 3\epsilon\rangle
		\end{split}
	\end{equation*}
	and $\epsilon$ is, as usual, the $(-2)$-class given by half of the divisor introduced by the Hilbert--Chow morphism. Once again we note that with \Cref{Prop:Namikawa} we have the existence of an order 3 automorphism on $\hat{\Sigma}$ conjugate to $\tilde{\rho}=\rho_{|T_{K3}}$. This automorphism is, modulo conjugation, uniquely determined by its invariant lattice by \cite{artebanisarti}, therefore it is $\hat{\tau}$. As in the previous section we note that $\Pic(\hat{\Sigma}^{[2]})\simeq j(\theta)\oplus W$, proving \Cref{claim}. Moreover $W\simeq E_8(-1)$ which has trivial discriminant, therefore we can apply \Cref{thm:teoremino} and check that $(\hat{\Sigma}^{[2]}, \eta, \hat{\tau})\in \mathcal{M}^{\rho_0, \zeta}_{K(T_0)}$.
	Then we can deduce the following
	\begin{prop}\label{prop:A4}
		Let $R_0=\Span(\delta_1, \dots, \delta_4, \rho(\delta_1), \dots, \rho(\delta_4))\simeq E_8(-1)$ be the degeneracy lattice relative to a period $\omega$ of a generic cubic threefold having one singularity of type $A_4$. Then the $A_4$ locus $\Delta^{A_4}_3$ is birational to the moduli space $\mathcal{N}^{\rho_0, \zeta}_{K(T_0)}$.
	\end{prop}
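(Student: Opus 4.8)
The plan is to run the argument of Proposition \ref{prop:A1} essentially verbatim, since all the lattice-theoretic ingredients specific to the $A_4$ case have already been assembled above: we know $\Pic(\hat{\Sigma}^{[2]}) \simeq U \oplus E_6(-1) \oplus \langle -2 \rangle \simeq E_8(-1) \oplus \langle 6 \rangle = T_0$, that the summand $W \simeq E_8(-1)$ has trivial discriminant, that $\hat{\Sigma}$ (hence $\hat{\Sigma}^{[2]}$) carries an order three non-symplectic automorphism $\hat{\tau}$ inducing $\rho \in O(L)$ with $j(\theta)$ as fixed lattice and acting on $W$ as the unique fixed-point-free order three isometry of $E_8(-1)$, and that Theorem \ref{thm:teoremino} therefore produces $\rho_0 \in O(L)$ lifting $\id_{T_0} \oplus \rho_{|S_0}$ together with a point $(\hat{\Sigma}^{[2]}, \eta, \hat{\tau}) \in \mathcal{M}^{\rho_0, \zeta}_{K(T_0)}$.

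Concretely, I would first recall from \cite[Section 6]{ACT} that the period map on the moduli of cubic threefolds extends to $\Delta^{A_4}_3$ by setting $\mathcal{P}^3(C) := \mathcal{P}^{\tilde{\rho_0}, \zeta}_{T_{K3}}((\hat{\Sigma}, \eta))$ for a generic $A_4$-nodal cubic, where $T_{K3} := U \oplus E_6(-1)$ is the $K3$-part of $T_0$; and that, following \cite{DolgachevKondo}, the moduli space $\mathcal{K}^{\tilde{\rho_0}, \zeta}_{T_{K3}}$ of $T_{K3}$-polarized $K3$ surfaces with a non-symplectic order three automorphism of cohomological type $\tilde{\rho_0}$ carries a period map inducing a bijection onto $\left(\Omega^{\tilde{\rho_0}, \zeta}_{T_{K3}} \setminus \mathcal{H}_{T_{K3}}\right)/\Gamma^{\tilde{\rho_0}, \zeta}_{T_{K3}}$, with $\mathcal{H}_{T_{K3}}$ the union of $\mu^{\perp} \cap \Omega^{\tilde{\rho_0}, \zeta}_{T_{K3}}$ over $(-2)$-classes $\mu \in S_0$. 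Next, by (\ref{eq: Moduli}) with $T = T_0$, the period map $\mathcal{P}^{\rho_0, \zeta}_{T_0}$ on $\mathcal{N}^{\rho_0, \zeta}_{K(T_0)}$ is a bijection onto $\left(\Omega^{\rho_0, \zeta}_{T_0} \setminus (\mathcal{H}_{T_0} \cup \mathcal{H}'_{T_0})\right)/\Gamma^{\rho_0, \zeta}_{T_0}$. By construction of $\rho_0$ one has $\Omega^{\rho_0, \zeta}_{T_0} = \Omega^{\tilde{\rho_0}, \zeta}_{T_{K3}}$ and $\Gamma^{\rho_0, \zeta}_{T_0} = \Gamma^{\tilde{\rho_0}, \zeta}_{T_{K3}}$, and since $S_0$ embeds in the unimodular lattice $L_{K3}$ the relevant wall divisors of $T_0$ are precisely the $(-2)$-classes, so $\mathcal{H}_{T_0} = \mathcal{H}_{T_{K3}}$; as $\mathcal{H}'_{T_0}$ only removes a proper closed subset, the two quotients are birational. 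Thus $\mathcal{N}^{\rho_0, \zeta}_{K(T_0)}$ is birational to $\mathcal{K}^{\tilde{\rho_0}, \zeta}_{T_{K3}}$, and it remains to observe — again as in \cite{ACT} — that the extended period map sends $\Delta^{A_4}_3$ isomorphically onto its image in the relevant hyperplane arrangement, so a generic point of $\Delta^{A_4}_3$ maps to the period of a generic member of $\mathcal{K}^{\tilde{\rho_0}, \zeta}_{T_{K3}}$, which by \cite{DolgachevKondo} is an isomorphism onto the image. Composing the three birational correspondences yields the statement.

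I do not expect any serious obstacle. The only points that need a moment's care, rather than blind copying, are: the identification of $T_{K3} \simeq U \oplus E_6(-1)$ as the $K3$-part of $T_0 \simeq E_8(-1) \oplus \langle 6 \rangle$ compatibly with the splitting $\Pic(\hat{\Sigma}^{[2]}) \simeq \Pic(\hat{\Sigma}) \oplus \langle \epsilon \rangle$ and with the geometric automorphism $\hat{\tau}$ (so that the abstract fixed-point-free order three isometry of $E_8(-1)$ restricts to the cohomological action of $\hat{\tau}$ on $U \oplus E_6(-1)$, which is the isometry uniquely characterised by its invariant lattice by \cite{artebanisarti}), and the dimension count $\dim \Delta^{A_4}_3 = 10 - 4 = 6 = \dim \mathcal{N}^{\rho_0, \zeta}_{K(T_0)}$, the latter following from $\rho$ acting on $E_8(-1)$ with $\zeta_3$ and $\bar{\zeta_3}$ each of multiplicity four (cf. Appendix \ref{appendix}). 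Once these are recorded, the proof of Proposition \ref{prop:A1} applies word for word.
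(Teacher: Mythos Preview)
Your proposal is correct and follows exactly the approach the paper takes: the paper does not even write out a separate proof for Proposition \ref{prop:A4}, but simply records the lattice-theoretic data (the identification $\Pic(\hat{\Sigma}^{[2]})\simeq E_8(-1)\oplus\langle 6\rangle$, triviality of the discriminant of $W\simeq E_8(-1)$, and the application of Theorem \ref{thm:teoremino}) and then states the proposition as an immediate consequence, just as for the $A_3$ case in Proposition \ref{prop:A3}, whose proof reads ``one can adjust the same proof of Proposition \ref{prop:A1} to this case and everything works.'' You have spelled out precisely that adjustment.
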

	\section{Singularity of type $A_2$}\label{sec: A2}
	In this Section we begin the study of $\Delta_3^{A_2}$. As we will see in this section this case is quite different from the other cases already treated, nevertheless we will use the same notation in order to stress the similarities. Indeed, in this case the associated $K3$ has three isolated singularities instead of one.\newline

	With a standard computation (using e.g the recognition principle stated in \cite{BruceWall}) we can see that when $C$ has an isolated singularity of type $A_2$, $Y$ has an isolated singularity of type $D_4$. The family of cubic threefolds having a singularity of type $A_2$ corresponds to the generic family of complete $(2,3)$-intersections in $\mathbb{P}^4$ where the quadratic part $f_2$ has rank 3 (a proof of this fact can be found in the \Cref{appendix}). Thus, the singular locus of the quadric hypersurface defined by $f_2=0$ is a line $(l_1(t):\dots:l_4(t):s)$; this line intersects the cubic hypersurface in 3 points $p_1, p_2, p_3$ for $f_3$ sufficiently generic.  Hence, $\Sigma$ has exactly three singular points. Let $\sigma$ be a covering automorphism on $Y$; it restricts to an order three automorphism $\tau$ on $\Sigma$, namely the one which maps $x_5\mapsto \zeta_3\cdot x_5$ where $\zeta_3$ is just a primitive third root of unity. As we are interested in the minimal resolution $\hat{\Sigma}$ of $\Sigma$ (which is just the blow up of $\Sigma$ at each $p_i$) we want to prove that the automorphism $\tau$ of $\Sigma$ lifts to an automorphism $\hat{\tau}$ of $\hat{\Sigma}$ and find its fixed locus.
	\begin{prop}\label{prop:automorfismo A2}
		With the above notation there exists a unique lift $\hat{\tau}$ such that $\hat{\tau}$ is an automorphism of $\hat{\Sigma}$ and commutes with the map giving the minimal resolution of singularities $\beta$. Moreover $\Fix(\hat{\tau})\simeq\Fix(\tau)$ is a curve of genus 4.
	\end{prop}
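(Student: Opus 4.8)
The plan is to construct $\hat\tau$ from the universal property of the blow-up, and then to locate $\Fix(\hat\tau)$ by tracking the action of $\hat\tau$ on the three exceptional curves.

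\emph{Constructing the lift.} Recall $\hat\Sigma=\Bl_{\{p_1,p_2,p_3\}}\Sigma$, with blow-up morphism $\beta$; this is already the minimal resolution, since one blow-up replaces each node $p_i$ by a smooth rational $(-2)$-curve $E_i$. First I would note that $\tau$, being an automorphism of $\Sigma$, permutes $\Sing(\Sigma)=\{p_1,p_2,p_3\}$ and hence preserves the ideal sheaf $\mathcal I$ of their reduced union; therefore $\tau\circ\beta\colon\hat\Sigma\to\Sigma$ pulls $\mathcal I$ back to the invertible sheaf $\mathcal O_{\hat\Sigma}(-E)$, so the universal property of blowing up provides a unique $\hat\tau\colon\hat\Sigma\to\hat\Sigma$ with $\beta\circ\hat\tau=\tau\circ\beta$. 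Doing the same for $\tau^{-1}$ yields a two-sided inverse, so $\hat\tau\in\Aut(\hat\Sigma)$, and since $\hat\tau^{3}$ is a lift of $\tau^{3}=\id$ we get $\hat\tau^{3}=\id$. Uniqueness of a lift commuting with $\beta$ is automatic: two such would agree on the dense open set $\hat\Sigma\setminus E$, where $\beta$ is an isomorphism.

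\emph{The fixed locus stays off $E$.} The essential geometric input is that $\tau$ acts \emph{freely} on $\{p_1,p_2,p_3\}$. Indeed, $\Sing(\{f_2=0\})\subset\mathbb P^4$ is the line joining the kernel point $[k_1:\dots:k_4:0]$ of the rank-three quadratic form $f_2$ to the cone vertex $[0:0:0:0:1]$; intersecting this line with the cubic $\{f_3+x_5^3=0\}$ produces $p_i=[k_1:\dots:k_4:s_i]$, where $s_1,s_2,s_3$ are the three cube roots of $-f_3(k_1,\dots,k_4)$, which are nonzero and pairwise distinct for $f_3$ generic. As $\tau$ multiplies $x_5$ by $\zeta_3$, it cyclically permutes the $p_i$, so $\hat\tau$ cyclically permutes $E_1,E_2,E_3$ and has no fixed point on $E_1\cup E_2\cup E_3$. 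Consequently $\Fix(\hat\tau)\subset\hat\Sigma\setminus(E_1\cup E_2\cup E_3)$, which $\beta$ maps isomorphically onto the smooth locus $\Sigma\setminus\{p_1,p_2,p_3\}$; since the $p_i$ have $s_i\neq0$ they are not fixed by $\tau$ in any case, so under this isomorphism $\Fix(\hat\tau)$ corresponds exactly to $\Fix(\tau)$, giving $\Fix(\hat\tau)\simeq\Fix(\tau)$.

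\emph{Computing the genus.} Finally I would identify $\Fix(\tau)$. A point of $\Sigma$ fixed by $x_5\mapsto\zeta_3 x_5$ has $x_5=0$ or $x_1=\dots=x_4=0$; the second case is the point $[0:0:0:0:1]$, which is not on $\Sigma$ because $f_3(0,\dots,0)+1\neq0$. Hence $\Fix(\tau)=\Sigma\cap\{x_5=0\}$ is the complete intersection of the quadric $\{f_2=0\}$ and the cubic $\{f_3=0\}$ in the $\mathbb P^3$ with coordinates $x_1,\dots,x_4$, a smooth curve for $f_3$ generic; by adjunction its canonical bundle is $\mathcal O(2+3-4)=\mathcal O(1)$ of degree $6$, so it has genus $4$. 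Combining the three steps proves the proposition.

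\emph{The main obstacle.} The one step that needs genuine care — and which sets this case apart from $i=3,4$, where the relevant node is $\tau$-fixed — is the assertion that $\{p_1,p_2,p_3\}$ forms a single free $\tau$-orbit: this is precisely what forces $\Fix(\hat\tau)$ away from the exceptional divisor, and it rests on the explicit description of $\Sing(\{f_2=0\})$ as the line through $[k_1:\dots:k_4:0]$ and the cone vertex, together with genericity of $f_3$ ensuring $f_3(k_1,\dots,k_4)\neq0$.
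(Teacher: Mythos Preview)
Your proof is correct and follows essentially the same route as the paper's own argument: show that $\tau$ cyclically permutes the three nodes (so the blown-up locus is $\tau$-stable with no fixed point on it), invoke the universal property of the blow-up to get the unique lift, deduce that $\Fix(\hat\tau)$ avoids the exceptional curves and hence coincides with $\Fix(\tau)$, and identify the latter as a genus-$4$ curve. The only cosmetic difference is that you compute the genus directly via adjunction on the $(2,3)$-curve in $\mathbb{P}^3$, whereas the paper simply points to a direct computation or the Artebani--Sarti classification; your version is slightly more self-contained but not a different strategy.
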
 
	\begin{proof}
		The singular locus $\Sing(\Sigma)$ is a proper orbit for the automorphism $\tau$, where proper means that $\Fix(\tau)\cap\Sing(\Sigma)=\emptyset$. Indeed the points in the singular locus are permuted by $\sigma$. Take $p_i\in\Sing(\Sigma)$ of coordinates $(\bar{l_1}:\dots:\bar{l_4}:\bar{s})$ then $(\bar{l_1}:\dots:\bar{l_4}:\zeta_3\bar{s})$ is again on the same line defined by the singular locus of $f_2=0$ and a different point of $\Sing(\Sigma)$. So $\Sing(\Sigma)$ is mapped to itself and therefore there exists a unique lift $\hat{\tau}$ such that $\hat{\tau}$ is an automorphism of $\hat{\Sigma}$ and commutes with $\beta$. By construction of $\hat{\tau}$, $\Fix(\hat{\tau})\cap(\hat{\Sigma}\setminus\beta^{-1}(\Sing(\Sigma)))\simeq\Fix(\tau)$ as $\hat{\tau}$ acts in the same way of $\tau$ outside the exceptional divisors introduced by blowing up. Moreover, the diagram
		\begin{equation*}
			\begin{tikzcd}
				\hat{\Sigma} \ar[r, "\hat{\tau}"] \ar[d, "\beta", dashed] & \hat{\Sigma} \ar[d, "\beta", dashed] \\
				\Sigma \ar[r, "\tau"] & \Sigma
			\end{tikzcd}
		\end{equation*} commutes by the universal property of blow-ups, therefore $\hat{\tau}(\beta^{-1}(p_i))=\beta^{-1}(\tau (p_i))=\beta^{-1}(p_j)$ (where $j=i+1$ modulo 3).
		Therefore $\Fix(\hat{\tau})\simeq\Fix(\tau)$ which is a curve of genus 4. In order to see that it is indeed a genus 4 curve one can look at the explicit computation done in \cite[Proposition 4.7]{artebanisarti}.
	\end{proof} 
	It follows from the results in \textit{loc. cit.}, since the automorphism $\hat{\tau}$ on the $K3$ surface $\hat{\Sigma}$ fixes exactly one curve, that the invariant lattice $T(\hat{\tau})$ in $H^2(\hat{\Sigma}, \mathbb{Z})$ is isometric to $U(3)$ and its orthogonal complement in  $H^2(\hat{\Sigma}, \mathbb{Z})$ is isometric to $U \oplus U(3) \oplus E_8(-1)^{\oplus 2}$. As we are considering a generic $Y$ with only one singularity of type $D_4$ and thus a generic K3 surface $\Sigma$ with exactly three $A_1$ singularities, the Picard lattice $T$ of $\hat{\Sigma}$ is of rank four. We use again the description given in \cite[Proposition 6.3.8]{Stegmann} and we find a lattice $\langle C_1, E_1, E_2, E_3\rangle\subset \Pic(\hat{\Sigma})$ with the following Gram matrix: \begin{equation*}
		\begin{pmatrix*}[r]
			0 &  1 &  1 &  1\\
			1 & -2 &  0 &  0\\
			1 &  0 & -2 &  0\\
			1 &  0 &  0 & -2
		\end{pmatrix*}
	\end{equation*}
	We consider $C$ to be generic with a singularity of type $A_2$ therefore by genericity (using the computations of the rank of the Picard group in \Cref{appendix}) it holds $\langle C_1, E_1, E_2, E_3\rangle= \Pic(\hat{\Sigma})$. 
	\begin{rmk}
		Note that $$\langle C_1, E_1, E_2, E_3\rangle=\langle C_1, C_1+E_1, -2C_1-E_1+E_2, -E_2+E_3 \rangle \simeq U\oplus A_2(-2).$$ From the discussion above we know that $T(\hat{\tau})\simeq U(3)$ admits a primitive embedding $U(3)\hookrightarrow T$, so from this isomorphism we see that  $U(3)\oplus A_2(-2)\subset U\oplus A_2(-2)$ can be written as a sublattice of finite index.
	\end{rmk}
	As in previous sections we note the following isometry: 
	\begin{equation*}
		\Pic(\hat{\Sigma}^{[2]})\simeq \Pic(\hat{\Sigma})\oplus\langle -2\rangle\simeq U\oplus A_2(-2)\oplus \langle -2\rangle \simeq \langle 6\rangle \oplus D_4(-1).
	\end{equation*}
	This isometry can be described by \begin{equation*}
		\Pic(\hat{\Sigma}^{[2]})= \langle C_1, E_1, E_2, E_3, \epsilon\rangle\simeq \langle-E_1, E_2, C_1-\epsilon, E_3, 2(2C_1+E_1+E_2+E_3)-3\epsilon\rangle.
	\end{equation*}
	Here we are implicitly giving an embedding of the square six polarization class $\theta$ into the Picard lattice as $2\theta_{K3}-3\epsilon$ where $\theta_{K3}$ is a square six class on the $K3$ surface $\hat{\Sigma}$. As already remarked in \Cref{sec:moduli}, up to an isometry (so after a change of the marking), we can suppose $\theta_{K3}$ to be $3u_1+u_2$ as in our setting. This proves \Cref{claim}.
	It is useful to describe also the transcendental lattice $\TR(\hat{\Sigma}^{[2]})$ of $\hat{\Sigma}^{[2]}$. So 
	\begin{equation*}
		\TR(\hat{\Sigma}^{[2]})\simeq \TR(\hat{\Sigma})\simeq U^{\oplus 2}\oplus E_8(-1)\oplus A_2(-1)\oplus D_4(-1).
	\end{equation*}
	In this case we cannot proceed as described in \Cref{DegSec}. Indeed, let us denote with $T_0=\eta(\Pic(\hat{\Sigma}^{[2]}))$ and $S_0=T_0^\perp=\eta(\TR(\hat{\Sigma}^{[2]}))$ and prove the following proposition.
	\begin{prop}
		There exists no order three isometry $\rho_0\in O(L)$ on the $K3^{[2]}$ lattice $L$ which extends the isometry $\text{id}_{T_0}\oplus\rho_{|S_0} \in O(T_0)\oplus O(S_0)$.
	\end{prop}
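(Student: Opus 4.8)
The plan is to apply the lifting criterion of Proposition \ref{teonik} in the contrapositive: I will show that the isometry $\rho^*_{S_0}$ induced on the discriminant group $D_{S_0}$ cannot be made to match, through the glue map $\gamma$, the trivial isometry $\mathrm{id}^*$ on $D_{T_0}$, so that $\mathrm{id}_{T_0}\oplus\rho_{|S_0}$ admits no lift. The key point is that in this case the Picard lattice is $T_0\simeq\langle 6\rangle\oplus D_4(-1)$, and the order three isometry $\rho$ restricted to the $D_4(-1)$ summand is (by Section \ref{IsomD4}) the unique, up to conjugation, fixed-point-free order three isometry of $D_4$. Unlike the $A_2(-1)$ and $E_6(-1)$ cases, this isometry does \emph{not} act trivially on the discriminant group $D_{D_4}\simeq(\mathbb{Z}/2\mathbb{Z})^{2}$: its induced action is a nontrivial order three automorphism of $(\mathbb{Z}/2\mathbb{Z})^2$, i.e.\ it cyclically permutes the three nonzero elements. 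This is the heart of the obstruction and the first thing I would verify explicitly, either by writing the isometry as an integral $4\times4$ matrix and computing its action on $D_4^\vee/D_4$, or by invoking the structure of $O(D_4)\simeq W(F_4)$ acting on the discriminant as the symmetric group $S_3$ and noting that the order three element acts as a $3$-cycle.

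Next I would translate this into a statement about $D_{S_0}$. Since $L$ is unimodular, $D_{S_0}\simeq D_{T_0}$ (with the quadratic form negated), and $D_{T_0}\simeq D_{\langle 6\rangle}\oplus D_{D_4(-1)}\simeq\mathbb{Z}/6\mathbb{Z}\oplus(\mathbb{Z}/2\mathbb{Z})^2$. Under the marking, $\rho$ acts on $S_0=\eta(\TR(\hat\Sigma^{[2]}))$ and hence on $D_{S_0}$; I would identify, via the chain of overlattice relations $S_0\oplus T_0\subset L$ and $\TR(\hat\Sigma)\supset$ (glue data), how $\rho^*_{S_0}$ corresponds to $\rho^*_{T_0}$. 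The point is that the glue map $\gamma\colon H\to D_{S_0}$ identifying $L$ as an overlattice of $T_0\oplus S_0$ must be $\rho$-equivariant for a lift to exist, i.e.\ $\gamma\circ(\mathrm{id}_{T_0})^*|_H=\rho^*_{S_0}\circ\gamma$; since $(\mathrm{id}_{T_0})^*=\mathrm{id}$ on $D_{T_0}$, this forces $\rho^*_{S_0}$ to be the identity on $\gamma(H)$. But $\rho^*_{S_0}$ restricted to the relevant $(\mathbb{Z}/2\mathbb{Z})^2$-part is the nontrivial $3$-cycle found above, which fixes only the zero element; so $\gamma(H)$ would have to avoid the $2$-torsion entirely. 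I would then check, using the explicit glue vectors coming from the isometry $U\oplus A_2(-2)\oplus\langle-2\rangle\simeq\langle 6\rangle\oplus D_4(-1)$ (equivalently, from the relation between $\Pic(\hat\Sigma^{[2]})$ and the $K3$ lattice), that $H$ necessarily meets the $2$-torsion nontrivially — indeed $|D_{T_0}|=24$ while the genuine glue subgroup for the embedding into the unimodular lattice $L$ has the order dictated by $|D_L|=2$ and $\operatorname{rk}$ considerations, forcing $H$ to contain the order three part \emph{and} interact with the $(\mathbb{Z}/2)^2$ part. This contradiction with $\rho$-equivariance is what rules out the lift.

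The main obstacle I anticipate is bookkeeping the discriminant forms and glue maps precisely enough to pin down that $H$ (and hence $\gamma(H)$) contains a nonzero element of the $(\mathbb{Z}/2\mathbb{Z})^2$-factor on which $\rho^*$ acts without nonzero fixed points; a priori one must exclude the possibility that the gluing is concentrated in the $\mathbb{Z}/6\mathbb{Z}$-factor where $\rho^*$ could be trivial. I would handle this by a direct computation with the explicit bases given in the excerpt — the Gram matrix $\left(\begin{smallmatrix}0&1&1&1\\1&-2&0&0\\1&0&-2&0\\1&0&0&-2\end{smallmatrix}\right)$ for $\Pic(\hat\Sigma)$ and the stated isometry to $\langle 6\rangle\oplus D_4(-1)$ — to write down generators of $D_{T_0}$, then use that $\TR(\hat\Sigma^{[2]})\simeq U^{\oplus2}\oplus E_8(-1)\oplus A_2(-1)\oplus D_4(-1)$ has discriminant form $\,u_6\oplus(\text{form on }(\mathbb{Z}/2)^2)\,$ matching $-q_{T_0}$, so that the gluing is forced to be the ``diagonal'' identification of the full discriminant group; on the $(\mathbb{Z}/2\mathbb{Z})^2$ part this identification is $\rho$-equivariant only if $\rho^*$ is trivial there, which it is not. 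Once this non-equivariance is established, Proposition \ref{teonik} gives the conclusion immediately, and no further work (e.g.\ about Kähler chambers) is needed for this negative statement.
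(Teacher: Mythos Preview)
Your proposal is correct and follows essentially the same route as the paper: both identify the obstruction as the nontrivial action of the unique fixed-point-free order-three isometry of $D_4(-1)$ on its discriminant $(\mathbb{Z}/2\mathbb{Z})^2$, and then invoke Proposition~\ref{teonik} to conclude that $\mathrm{id}_{T_0}\oplus\rho_{|S_0}$ cannot lift. Two small remarks: first, your sentence ``since $L$ is unimodular'' is a slip (you correctly use $|D_L|=2$ a few lines later); second, the bookkeeping you flag as the main obstacle --- showing that the glue subgroup $H$ genuinely meets the $(\mathbb{Z}/2\mathbb{Z})^2$-factor --- is disposed of in one line by the index count $|H|=[L:T_0\oplus S_0]$, which from $|D_{T_0}|\cdot|D_{S_0}|=|D_L|\cdot|H|^2$ gives $|H|=12=|D_{S_0}|$, so in fact $H=D_{S_0}$ and no explicit glue-vector computation is needed.
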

	\begin{proof}
		Suppose that it lifts to $\rho_0\in O(L)$, we want to to apply \Cref{teonik} to arrive to a contradiction. This means that there exists a subgroup $H< D_{S_0}$ which is glued to a subgroup $H'< D_{T_0}$. The only possibility is that $D_{D_4(-1)}<D_{S_0}$ is contained in the subgroup $H$. Moreover, when we restrict $\rho_0$ to $T_0$ it is the identity, so the order three isometry without fixed points $\rho_{|S_0}$ must be the identity on $H$. But we know from \Cref{IsomD4} that there exists only one, up to conjugation, order three isometry without fixed points on $D_4(-1)$ and it is not trivial on $D_{D_4(-1)}\simeq \left(\mathbb{Z}/2\mathbb{Z}\right)^2$. For the sake of completeness we write explicitly the order three isometry $\rho$ of $D_4(-1)$ without fixed points. \newline
		We can express this lattice as $D_4(-1)=\langle d_1, d_2, d_3, d_4\rangle$ with the following Gram matrix:\begin{equation*}
			\begin{pmatrix*}[r]
				
				-2 & 0 & -1 & 0\\
				0 & -2 & 1 & 0 \\
				-1 & 1 & -2 & 1 \\
				0 & 0 & 1 & -2
			\end{pmatrix*}
		\end{equation*}
		Then the isometry on the generators is given by:
		\begin{align*}
			d_1 &\mapsto -d_1+d_3-d_4 \\
			d_2 &\mapsto -d_1+d_2-d_3 \\
			d_3 &\mapsto -d_1-d_2+d_3+d_4 \\
			d_4 &\mapsto -d_2+d_3-d_4. 
		\end{align*}
		After a standard computation a basis of $D_{D_4(-1)}$ can be given by $\langle-d_1+\frac{1}{2}d_2+d_3+\frac{1}{2}d_4, \frac{1}{2}d_1-d_2-d_3-\frac{1}{2}d_4\rangle=\langle a, b\rangle$ and we see that $\rho^*(a)=b$ and $\rho^*(b)=a+b$, where $\rho^*$ is the map on $D_{D_4(-1)}$ induced by $\rho$.
	\end{proof}
	We still have a non-symplectic automorphism of order three on the $K3$ surface $\hat{\Sigma}$ and, thus, on $\hat{\Sigma}^{[2]}$. Hence, as noted in \Cref{rmk: roots}, we can see the latter as a point $(\hat{\Sigma}^{[2]}, \eta)$ having a degeneracy lattice $R_{\delta_1}\simeq R_{\delta_2}\simeq A_2(-1)\subset R_0$, ``forgetting'' one of the two roots orthogonal to its period (cfr. with \Cref{dfn:degenlattice}). We now call $T_{\delta_1}=\langle 6\rangle\oplus A_2(-1)\simeq \langle \theta, \delta_1\rangle$ and $S_{\delta_1}$ its orthogonal complement in $L$. Then it is well defined $\rho_{\delta_1}\in O(L)$, the lifting to $L$ of the isometry $\text{id}_{|T_{\delta_1}}\oplus\rho_{|S_{\delta_1}}$ to $L$. In order to see this, it is enough to note that in the proof of the lifting of the isometry in \Cref{thm:teoremino} the hypothesis on the Picard lattice is not used, the only thing used is the triviality of $\rho$ on the discriminant and in our case this holds on $R_{\delta_1}$. So, exactly as in the case $A_1$ done in \cite[$\S 4.3$]{BCS}, the isometry $\rho_{\delta_1}$ restricts to an isometry of $(\mathbb{Z}\epsilon)^{\perp}$. Note that the fixed part of $\rho_{\delta_1}$ is by definition $\langle 6\rangle\oplus A_2(-1)\simeq U(3)\oplus\langle -2\rangle$ and if we consider a primitive embedding $T_{\delta_1}\subset \Pic(\hat{\Sigma}^{[2]})$ we get $T_{\delta_1}\oplus A_2(-2)\simeq\langle 6\rangle\oplus A_2(-1)\oplus A_2(-2)\simeq U(3)\oplus A_2(-2)\oplus\langle -2\rangle\subset \Pic(\hat{\Sigma}^{[2]})$.
	\begin{lemma}
		The action of $\hat{\tau}$ on $H^2(\hat{\Sigma}, \mathbb{Z})$ is conjugate to the isometry $\rho_{\delta_1}$ restricted to $(\mathbb{Z}\epsilon)^{\perp}$. 
	\end{lemma}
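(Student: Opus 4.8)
The plan is to recognize $\rho_{\delta_1}|_{(\mathbb{Z}\epsilon)^{\perp}}$ as an order-three isometry of the $K3$ lattice whose invariant sublattice is isometric to $U(3)$, and then to appeal to the uniqueness, up to conjugation, of such isometries. Write $L_{K3}\simeq U^{\oplus 3}\oplus E_8(-1)^{\oplus 2}$ for the $K3$ lattice, so that $(\mathbb{Z}\epsilon)^{\perp}\subset L$ is isometric to $L_{K3}$. Since $\epsilon$ lies in the fixed lattice $T_{\delta_1}$ of $\rho_{\delta_1}$, the isometry $\rho_{\delta_1}$ preserves $(\mathbb{Z}\epsilon)^{\perp}$, and its restriction $\hat{\rho}:=\rho_{\delta_1}|_{(\mathbb{Z}\epsilon)^{\perp}}$ is still of order three, as it already has order three on $S_{\delta_1}\subset(\mathbb{Z}\epsilon)^{\perp}$.

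First I would pin down the invariant lattice of $\hat{\rho}$. By construction the fixed lattice of $\rho_{\delta_1}$ is $T_{\delta_1}\simeq\langle 6\rangle\oplus A_2(-1)\simeq U(3)\oplus\langle -2\rangle$; exactly as in the $A_1$ case of \cite[\S 4.3]{BCS} one checks that $\epsilon$ lies in $T_{\delta_1}$ and spans a direct $\langle -2\rangle$-summand, whose orthogonal complement inside $T_{\delta_1}$ is a copy of $U(3)$. Hence the invariant lattice of $\hat{\rho}$ is $(L_{K3})^{\hat{\rho}}=T_{\delta_1}\cap\epsilon^{\perp}\simeq U(3)$, and its orthogonal complement in $L_{K3}$ equals $S_{\delta_1}$. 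Since $U(3)$ admits, up to isometry, a unique primitive embedding into $L_{K3}$ (see \cite{artebanisarti}), this forces $S_{\delta_1}\simeq U\oplus U(3)\oplus E_8(-1)^{\oplus 2}$, and $\hat{\rho}$ acts on $S_{\delta_1}$ without nonzero fixed vectors, being of prime order. These are precisely the invariants of the action of $\hat{\tau}$ on $H^2(\hat{\Sigma},\mathbb{Z})$ recalled above: invariant lattice $U(3)$, orthogonal complement $U\oplus U(3)\oplus E_8(-1)^{\oplus 2}$, fixed-point-free order-three action on the latter.

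To conclude, I would realize $\hat{\rho}$ by a geometric automorphism and then invoke uniqueness. Choosing a period $\omega'$ in the $\zeta_3$-eigenspace of $(S_{\delta_1})_{\mathbb{C}}$ generic enough that the associated $K3$ surface $\Sigma'$ has Picard lattice exactly $U(3)$, the hypotheses of Theorem \ref{thm:namikawa} hold for $G=\langle\hat{\rho}\rangle$: the line $\mathbb{C}\omega'$ is $G$-invariant since $\hat{\rho}(\omega')=\zeta_3\omega'$; the lattice $S_{G,\Sigma'}=(\Lambda^G)^{\perp}\cap H^{1,1}_{\mathbb{Z}}(\Sigma')$ is zero, as $\Lambda^G=U(3)=\Pic(\Sigma')$, so a fortiori it contains no $(-2)$-class; and, since $\omega'\notin\Lambda^G$, it remains only to note that $\Lambda^G\simeq U(3)$ has signature $(1,1)$ and hence contains a class of positive square. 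Therefore $\hat{\rho}$ is conjugate, through an element of the Weyl group $W(\Sigma')$, to the cohomological action of a non-symplectic automorphism $\sigma'$ of order three on $\Sigma'$ with invariant lattice $U(3)$. By the classification of \cite{artebanisarti}, a non-symplectic order-three automorphism of a $K3$ surface is determined up to conjugation by its invariant lattice, so $(\sigma')^{*}$ is conjugate to the action of $\hat{\tau}$ in $O(L_{K3})$; composing the two conjugations yields the statement.

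The main obstacle is the uniqueness used in the last step: beyond the uniqueness of the primitive embedding $U(3)\hookrightarrow L_{K3}$, it rests on the uniqueness, up to conjugation, of the fixed-point-free order-three isometry of $U\oplus U(3)\oplus E_8(-1)^{\oplus 2}$ compatible with the gluing to $U(3)$ along the discriminant group. This is contained in the Artebani--Sarti classification, but one could alternatively give a self-contained argument in the spirit of Section \ref{sec:ComplexReflection}, via Springer theory applied to the relevant reflection group.
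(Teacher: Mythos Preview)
Your argument is correct but takes a genuinely different route from the paper. The paper applies Namikawa's theorem directly to $\hat{\Sigma}$ itself: with $G=\langle\rho_{\delta_1}\rangle$ one has $\Lambda^G\simeq U(3)$ and $S_{G,\hat{\Sigma}}\simeq A_2(-2)$, which contains no $(-2)$-classes, so Theorem~\ref{thm:namikawa} produces an automorphism $\bar{\tau}$ on $\hat{\Sigma}$ whose cohomological action is conjugate to $\rho_{\delta_1}|_{(\mathbb{Z}\epsilon)^{\perp}}$. The identification of $\bar{\tau}$ with $\hat{\tau}$ is then carried out geometrically, following \cite[Lemma~4.4, Proposition~4.7]{artebanisarti}: the fixed genus-4 curve $\bar{P}$ of $\bar{\tau}$ gives an embedding $\Phi_{|\bar{P}|}:\hat{\Sigma}\hookrightarrow\mathbb{P}^4$ (via Saint-Donat), and in suitable coordinates the induced automorphism is $x_5\mapsto\zeta_3 x_5$, i.e.\ coincides with $\hat{\tau}$.

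You instead pass through an auxiliary generic $K3$ surface $\Sigma'$ with $\Pic(\Sigma')\simeq U(3)$, apply Namikawa there, and then appeal to the Artebani--Sarti uniqueness of order-three isometries of $L_{K3}$ with invariant lattice $U(3)$. This avoids the Saint-Donat argument, but the detour through $\Sigma'$ is not really needed: the uniqueness statement you invoke at the end is purely lattice-theoretic and already forces $\hat{\rho}$ and $\hat{\tau}^*$ to be conjugate in $O(L_{K3})$ without realizing $\hat{\rho}$ geometrically. The paper's approach buys you a concrete identification on $\hat{\Sigma}$ itself (useful downstream, since one works with $\hat{\Sigma}^{[2]}$), while your approach isolates the lattice-theoretic core of the statement.
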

	\begin{proof}
		The proof is a straight-forward application of \Cref{thm:namikawa}. Take $G=\langle \rho_{\delta_1} \rangle$, then condition $i)$ is simply verified. As $L^G\simeq U(3)$, also condition $iii')$ is easily verified. Lastly $S_{G,X}\simeq A_2(-2)$, thus also condition $ii)$ is verified, therefore there exists an automorphism $\bar{\tau}$ on $\hat{\Sigma}$ whose action is conjugate to $\rho_{\delta_1}$. \newline 
		If $\hat{\tau}=\bar{\tau}$ we are done. Otherwise, $\bar{\tau}$ is an order 3 automorphism whose invariant lattice is isomorphic to $U(3)$ and its fixed locus is a genus 4 curve $\bar{P}$. By \cite[Lemma 4.4, Proposition 4.7]{artebanisarti} the embedding $\Phi_{|\bar{P}|}: \hat{\Sigma}\to \mathbb{P}^4$ is such that we can choose coordinates $(x_1: \dots: x_5)$ of $\mathbb{P}^4$ such that the hyperplane $H$ whose preimage $\Phi^{-1}(H)=\bar{P}$ is given by $x_5=0$. Moreover, the induced automorphism on the image is the automorphism of $\mathbb{P}^4$ that maps $x_5\mapsto \zeta_3\cdot x_5$. In other words we can make a change of coordinates on $\hat{\Sigma}$ such that $\hat{\tau}=\bar{\tau}$. 
	\end{proof} 
	We now consider the natural automorphism $\hat{\tau}^{[2]}$ induced on $\hat{\Sigma}^{[2]}$ by $\hat{\tau}$. As its invariant lattice is $T_{\delta_1}$ we see that $(\hat{\Sigma}^{[2]}, \eta, \hat{\tau})$ defines a point in $\mathcal{N}_{T_{\delta_1}}^{\rho_{\delta_1}, \zeta}$. If $\hat{\Sigma}^{[2]}$ is moreover $K(T_{\delta_1})$-general we can conclude as in \Cref{sec:A1} and \Cref{sec:A3A4}; but, it turns out not to be the case as proved by the following proposition.
	\begin{prop}\label{prop:K-gen}
		$\hat{\Sigma}^{[2]}$ is not $K(T_{\delta_1})$-general.
	\end{prop}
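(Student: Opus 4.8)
The plan is to exhibit an explicit wall divisor of $\hat\Sigma^{[2]}$ that lies in $\Pic(\hat\Sigma^{[2]})$ but not in $T_{\delta_1}$ and whose orthogonal hyperplane genuinely subdivides the chamber $K(T_{\delta_1})$; it then follows that $\mathcal{K}_{\hat\Sigma^{[2]}}\cap\iota(C_{T_{\delta_1}})$ is a \emph{proper} sub-chamber of $\iota(K(T_{\delta_1}))$, so $\hat\Sigma^{[2]}$ is not $K(T_{\delta_1})$-general in the sense of Definition \ref{def:K-gen}.

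First I would record the ambient picture. Under the identification $\Pic(\hat\Sigma^{[2]})\simeq\langle\theta\rangle\oplus D_4(-1)$ with $\theta^2=6$, the sublattice $T_{\delta_1}\simeq\langle\theta\rangle\oplus A_2(-1)$ sits inside with $A_2(-1)\subset D_4(-1)$ a root sublattice and $A_2(-1)^{\perp}\cap D_4(-1)\simeq A_2(-2)=T_{\delta_1}^{\perp}\cap\Pic(\hat\Sigma^{[2]})$. Next I would set up the comparison that controls $K(T_{\delta_1})$-generality: since $T_{\delta_1}$ is the whole $\hat\tau^{[2]*}$-invariant part of $\Pic(\hat\Sigma^{[2]})$, the set $\mathcal{K}_{\hat\Sigma^{[2]}}\cap\iota(C_{T_{\delta_1}})$ (which is the invariant Kähler cone of $\hat\Sigma^{[2]}$) is a chamber of $\iota(C_{T_{\delta_1}})$ cut out by the hyperplanes $(\mathrm{pr}_{T_{\delta_1}}\delta)^{\perp}$ as $\delta$ ranges over $\Delta(\Pic\hat\Sigma^{[2]})$ — by the numerical characterization recalled in Example \ref{ex:fondante} these are the $(-2)$-classes and the divisibility-two $(-10)$-classes of $\Pic(\hat\Sigma^{[2]})$ — whereas $K(T_{\delta_1})$ is by definition only cut by $\Delta(T_{\delta_1})$, i.e.\ by the $(-2)$-classes of $T_{\delta_1}$ itself (one checks that the only $(-2)$-classes of $\langle\theta\rangle\oplus A_2(-1)$ are the six roots of $A_2(-1)$ and the classes $\pm\theta+v$ with $v\in A_2(-1)$, $v^2=-8$, and that there is no divisibility-two $(-10)$-class). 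So it suffices to produce one $\delta\in\Delta(\Pic\hat\Sigma^{[2]})$ for which $(\mathrm{pr}_{T_{\delta_1}}\delta)^{\perp}$ is none of the walls of $K(T_{\delta_1})$ and meets its interior.

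The witness is a root $\delta$ of $D_4(-1)$ not contained in $A_2(-1)$. Writing $\delta=u+w$ with $u\in A_2(-1)_{\mathbb{Q}}$ and $w\in A_2(-2)_{\mathbb{Q}}$, I would take $\delta$ in the glue class for which $u$ is (a multiple of) the fundamental weight $\omega_1$ of $A_2(-1)$, so $u^2=-\tfrac23$ and $w^2=-\tfrac43$; such a $\delta$ exists because $-\tfrac43$ is attained in the nontrivial coset of $A_2(-2)^{*}$ (it is half of the value $-\tfrac83$ realized in the corresponding coset of $A_2(-1)^{*}$). Then $\mathrm{pr}_{T_{\delta_1}}\delta=u$ is a nonzero rational class, $u\notin T_{\delta_1}$, with $u^{\perp}\cap A_2(-1)_{\mathbb{R}}=\mathbb{R}\alpha_2$ for a simple root $\alpha_2$ with $(\omega_1,\alpha_2)=0$. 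Since $\mathbb{R}\alpha_2$ is not a root hyperplane of $A_2(-1)$ — no root of $A_2$ is orthogonal to a root — and since $u^{\perp}\ni\theta$ while every $(\pm\theta+v)^{\perp}$ with $v^2=-8$ avoids $\theta$, the hyperplane $u^{\perp}$ coincides with none of the defining walls of $K(T_{\delta_1})$. Moreover $\mathbb{R}\alpha_2$ passes through the interior of a Weyl chamber of $A_2(-1)$ and $u^{\perp}$ meets the positive cone $C_{T_{\delta_1}}$ (it contains $\theta$), so $u^{\perp}$ runs through the interior of $K(T_{\delta_1})$, which I would confirm by exhibiting invariant classes inside $K(T_{\delta_1})$ on the two sides of $u^{\perp}$.

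Granting this, $\mathcal{K}_{\hat\Sigma^{[2]}}\cap\iota(C_{T_{\delta_1}})$ lies strictly to one side of $\iota(u^{\perp})=\iota((\mathrm{pr}_{T_{\delta_1}}\delta)^{\perp})$ inside $\iota(K(T_{\delta_1}))$, hence is a proper subset of $\iota(K(T_{\delta_1}))$, so $\hat\Sigma^{[2]}$ is not $K(T_{\delta_1})$-general. The main obstacle is the very last point of the previous paragraph, namely verifying that the new hyperplane $u^{\perp}$ really cuts the interior of $K(T_{\delta_1})$ rather than lying outside its closure; this requires describing $K(T_{\delta_1})$ concretely enough to place explicit invariant Kähler-type classes on both sides of $u^{\perp}$, and it is precisely here that the discrepancy between $D_4(-1)\subset\Pic(\hat\Sigma^{[2]})$ and the smaller root lattice $A_2(-1)\subset T_{\delta_1}$ forces generality to fail — which is why the $A_2$ case needs the replacement notion of Kähler cone sections of $K$-type introduced later.
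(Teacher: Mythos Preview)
Your strategy is the same as the paper's: exhibit a wall divisor of $\hat\Sigma^{[2]}$ outside $T_{\delta_1}$ whose orthogonal hyperplane meets the invariant positive cone, forcing $\mathcal{K}_{\hat\Sigma^{[2]}}\cap\iota(C_{T_{\delta_1}})\subsetneq\iota(K(T_{\delta_1}))$. The difference is purely in execution, and the paper's execution removes exactly the obstacle you flag at the end.

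The paper works in the geometric basis $C_1,E_1,E_2,E_3,\epsilon$ rather than in your abstract $\langle\theta\rangle\oplus D_4(-1)$ picture. Since $\hat\tau$ permutes $E_1,E_2,E_3$ cyclically, the class $E_1$ is a $(-2)$-wall not in $T_{\delta_1}$, and one simply checks that $2C_1+E_1+E_2+E_3$ is an invariant class of square $6>0$ orthogonal to $E_1$; the class $2E_1+\epsilon$ gives a $(-10)$-wall with the same property. That is the whole computation. The point you are missing is the reduction that makes this sufficient: by \cite[Theorem~6.18]{Markman_surveyTorelli} the monodromy group acts transitively on the set of exceptional chambers, so it is enough to show that $\mu^\perp$ meets the \emph{entire} invariant positive cone $C^{\hat\tau}$ for some wall $\mu\notin T_{\delta_1}$, rather than the specific chamber $K(T_{\delta_1})$. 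Your attempt to verify directly that $u^\perp$ passes through the interior of $K(T_{\delta_1})$ is therefore unnecessary, and your acknowledged gap disappears once you invoke this transitivity.

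Two minor remarks on your write-up. First, your enumeration of $(-2)$-classes in $\langle\theta\rangle\oplus A_2(-1)$ is incomplete: there are also classes $\pm 2\theta+v$ with $v^2=-26$ (and more for higher $|a|$), since $13$ is represented by $a^2-ab+b^2$; this does not affect your argument because all such classes pair nontrivially with $\theta$, but the claim as stated is false. Second, your glue-coset construction of the witness root $\delta\in D_4(-1)\setminus A_2(-1)$ is correct but circuitous; in the geometric basis any of the $E_i$ already is such a root.
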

	\begin{proof}
		Note that by \cite[Theorem 6.18]{Markman_surveyTorelli} the group of monodromies $\text{Mon}^2(\hat{\Sigma})$ acts transitively on the set of exceptional
		chambers, therefore, up to taking a
		different birational model, in order to prove that $\hat{\Sigma}^{[2]}$ is not $K(T_{\delta_1})$-general it is sufficient to show that there exists a wall divisor $\mu\in\Delta(\hat{\Sigma})$ not fixed by the action of $\hat{\tau}$ but such that $C^{\hat{\tau}}\cap \mu^{\perp}\neq 0$, where $C^{\hat{\tau}}$ denotes a connected component of the invariant positive cone. We will prove that there exist both $-2$ and $-10$ walls which are not fixed by $\hat{\tau}$ by exhibiting them. The wall divisor $E_1$ is not fixed by $\hat{\tau}$ but its orthogonal hyperplane intersects non-trivially $C^{\hat{\tau}}$, e.g in $2C+E_1+E_2+E_3$. The same is true for the wall divisor $2E_1+\epsilon$.
	\end{proof}
	This proposition implies that $(\hat{\Sigma}^{[2]}, \eta, \hat{\tau})$ is a non-separable point in $\mathcal{M}^{\rho_{\delta_1}, \zeta}_{T_{\delta_1}}$. Moreover this condition persists also if we consider $(\hat{\Sigma}^{[2]}, \eta, \hat{\tau})$ in the quotient $\mathcal{N}^{\rho_{\delta_1}, \zeta}_{T_{\delta_1}}$. Indeed, in \cite[Section 4]{BCS_complex} the authors show that the points in the fibre over a period $\omega\in\mathcal{N}^{\rho_{\delta_1}, \zeta}_{T_{\delta_1}}$ are in a one-to-one correspondence with the number of orbits of the monodromy action on the chambers. In the proof of \Cref{prop:K-gen} we found a $(-10)$-class defining a wall whose orbit cuts $K(T_{\delta_1})$, therefore there exists at least two chambers which are not in the same orbit of the monodromy action. This implies the following corollary.
	\begin{corol}
		There exist at least two non biregular models in $\mathcal{N}^{\rho_{\delta_1}, \zeta}_{T_{\delta_1}}$ which are over the same fiber of a period $\omega$ relative to a generic cubic threefold with a single singularity of type $A_2$.
	\end{corol}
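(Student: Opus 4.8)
The plan is to combine the wall exhibited in the proof of Proposition~\ref{prop:K-gen} with the description of the fibres of the period map from \cite[Section~4]{BCS_complex}. First I would recall that, by that description, the points of the fibre of $\mathcal{P}^{\rho_{\delta_1},\zeta}_{T_{\delta_1}}$ over a period $\omega$ correspond bijectively to the orbits of $\Mon^2(T_{\delta_1},\rho_{\delta_1})$ on the set of chambers cut out inside $C_{T_{\delta_1}}$ by the hyperplanes $\mu^{\perp}\cap T_{\delta_1,\mathbb{R}}$, with $\mu$ running over the wall divisors of an IHS manifold of period $\omega$; by the Hodge-theoretic Torelli Theorem all models in this fibre are bimeromorphic, and two of them give the same point of the fibre exactly when they are biregular. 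So it is enough to produce at least two distinct orbits over $\omega$.

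Next I would feed in the computation from the proof of Proposition~\ref{prop:K-gen}: there $2E_1+\epsilon$ is a wall divisor of $\hat{\Sigma}^{[2]}$ of square $-10$ and divisibility two, it is not $\hat{\tau}$-invariant (hence not contained in $T_{\delta_1}$), and $(2E_1+\epsilon)^{\perp}$ meets the invariant positive cone $C^{\hat{\tau}}\simeq C_{T_{\delta_1}}$ non-trivially, e.g.\ along the invariant class $2C_1+E_1+E_2+E_3$. Since $2E_1+\epsilon\notin T_{\delta_1}$, its trace $(2E_1+\epsilon)^{\perp}\cap T_{\delta_1,\mathbb{R}}$ is a genuine hyperplane of $T_{\delta_1,\mathbb{R}}$, and, passing through the interior of $K(T_{\delta_1})$, it subdivides $K(T_{\delta_1})$ into at least two sub-chambers $\mathcal{C}'$ and $\mathcal{C}''$ of the chamber structure attached to $\omega$. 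These lie in distinct $\Mon^2(T_{\delta_1},\rho_{\delta_1})$-orbits, because every element of $\Mon^2(T_{\delta_1},\rho_{\delta_1})$ restricts to the identity on $T_{\delta_1}$, hence acts trivially on $T_{\delta_1,\mathbb{R}}\supseteq K(T_{\delta_1})$ and cannot send $\mathcal{C}'$ to $\mathcal{C}''$. Therefore the fibre over $\omega$ contains at least two points, i.e.\ at least two bimeromorphic, non-biregular $(\rho_{\delta_1},\zeta)$-polarized models, one of which is $\hat{\Sigma}^{[2]}$, and the corollary follows.

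The hard part will not be any single computation but rather the book-keeping: one has to check that the chamber/orbit description of \cite[Section~4]{BCS_complex} still applies in the present, non-$K(T_{\delta_1})$-general, situation, and that the hyperplane $(2E_1+\epsilon)^{\perp}$ really crosses the interior of $K(T_{\delta_1})$ instead of merely meeting its closure; the explicit interior invariant vector $2C_1+E_1+E_2+E_3$ recorded in the proof of Proposition~\ref{prop:K-gen} is what settles the latter.
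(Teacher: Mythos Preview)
Your proposal is correct and follows essentially the same approach as the paper: both invoke the fibre description from \cite[Section~4]{BCS_complex} and the $(-10)$-wall $2E_1+\epsilon$ found in Proposition~\ref{prop:K-gen} to produce two sub-chambers of $K(T_{\delta_1})$. You add the useful clarification that $\Mon^2(T_{\delta_1},\rho_{\delta_1})$ restricts to the identity on $T_{\delta_1}$ by its very definition, hence cannot identify the two sub-chambers---a point the paper leaves implicit when it concludes that the two chambers lie in different monodromy orbits.
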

	The pair $(\hat{\Sigma}^{[2]}, \eta)$ defines also a point in the moduli space of $(M,j)$-polarized IHS manifolds of $K3^{[2]}$-type, where $M:= U\oplus A_2(-2)\oplus\langle -2\rangle$. So we shift our interest in giving a formal characterization of marked IHS manifolds $(X, \phi)$ carrying the following commutative diagram in the data:
	\begin{equation}\label{diagram}
		\begin{tikzcd}
			T_{\delta_1}\ar[r, hook] \ar[rrd, hook, "j"] & M \ar[r, hook, "\iota"]  & \Pic(X)\subset H^2(X,\mathbb{Z}) \ar[d, "\phi"] \ar[r, "\sigma^*"] & H^2(X,\mathbb{Z}) \ar[d, "\phi"] \\
			&& L \ar[r, "\rho_{\delta_1}"] & L.
		\end{tikzcd}
	\end{equation}
    In order to arrive to a positive answer to our problem also for this case we want to study a general situation when an IHS manifold admits, like in this case, two compatible polarizations.
	\section{Kähler cone sections of $K$-type}\label{sec:conesections}
	In this section we introduce the notion of \emph{Kähler cone sections of $K$-type} in order to avoid the problems of separability illustrated in the previous chapter.\newline 
    
    Let $(X, \phi)$ be an $(M, j)$-polarized IHS manifold of type $L$ and let $T\subset M$ be a primitive sublattice. In this situation the pair $(X, \phi)$ is both $(M, j)$-polarized and $(T, j_{|T})$-polarized, so we can compare the notions of $K(M)$ and $K(T)$ generality when the two chambers are chosen in such a way that $\mathcal{K}_X\cap\iota(K(T))\neq\emptyset\neq\mathcal{K}_X\cap\iota(K(M))$, where $\iota$ denotes the $\mathbb{C}$-linear extension of the map $\iota$ of the diagram (\Cref{diagram}). Recall that $K(T)$ and $K(M)$ are, respectively, a connected component of $\mathcal{C}_T\setminus\bigcup_{\delta\in\Delta(S)}\delta^{\perp}$ and $\mathcal{C}_M\setminus\bigcup_{\delta\in\Delta(M)}\delta^{\perp}$. By their definition, $\Delta(T)=\Delta(M)\cap T$ and $\mathcal{C}_T= \mathcal{C}_M\cap (T\otimes \mathbb{R})$, therefore our choice of the chambers $K(M)$ and $K(T)$ implies $K(M)\cap (T\otimes \mathbb{R})\subset K(T)$ as the embedding of both via $\iota$ intersects $\mathcal{K}_X$.
	\begin{lemma} \label{lemma:Subsetgenerality}
		If a pair $(X, \phi)$ as above is $K(T)$-general then it is also $K(M)$-general.
	\end{lemma}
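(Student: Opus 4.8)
The plan is to unwind the definitions of $K(T)$- and $K(M)$-generality and exploit the relations $\Delta(T)=\Delta(M)\cap T$ and $C_T=C_M\cap(T\otimes\mathbb{R})$ already established in the paragraph preceding the lemma. Recall that $(X,\phi)$ being $K(T)$-general means $\iota(K(T))=\mathcal{K}_X\cap\iota(C_T)$, and being $K(M)$-general means $\iota(K(M))=\mathcal{K}_X\cap\iota(C_M)$. The chambers $K(T)$ and $K(M)$ are fixed once and for all so that both $\iota(K(T))$ and $\iota(K(M))$ meet $\mathcal{K}_X$, and we have already deduced $K(M)\cap(T\otimes\mathbb{R})\subset K(T)$.

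First I would assume $(X,\phi)$ is $K(T)$-general and show $\mathcal{K}_X\cap\iota(C_M)\subset\iota(K(M))$ (the reverse inclusion always holds, since $K(M)$ is by construction a chamber adjacent to $\mathcal{K}_X$ inside $C_M$ and $\mathcal{K}_X$ avoids every hyperplane $\delta^{\perp}$ with $\delta\in\Delta(M)$ — indeed $\mathcal{K}_X$ is cut out inside $\mathcal{C}_X$ by the wall divisors $\Delta(X)\supset\iota(\Delta(M))$ via Theorem \ref{thm: AmerikVerbitsky}). So take $x\in\mathcal{K}_X\cap\iota(C_M)$; I want to conclude $x\in\iota(K(M))$. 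Since $\mathcal{K}_X$ meets no hyperplane $\delta^{\perp}$ for $\delta\in\Delta(M)$, the point $x$ lies in \emph{some} chamber $K'$ of $C_M\setminus\bigcup_{\delta\in\Delta(M)}\delta^{\perp}$, and that chamber is adjacent to $\mathcal{K}_X$; the content of the lemma is that $K'=K(M)$, our distinguished chamber.

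The key step is to pin down which chamber of $C_M$ contains a given point of $\mathcal{K}_X\cap\iota(C_M)$ using the $T$-data. The idea is that a point $x\in C_M$ determines, and is largely determined by, the set of signs $\{(x,\delta)\ :\ \delta\in\Delta(M)\}$; restricting attention to those $\delta$ lying in $T$ recovers the chamber of $C_T$ it sits in, namely $K(T)$ by $K(T)$-generality (since $x\in\mathcal{K}_X\cap\iota(C_T)$ once $x\in\iota(C_M\cap(T\otimes\mathbb{R}))$ — but a general $x\in\mathcal{K}_X\cap\iota(C_M)$ need not lie in $T\otimes\mathbb{R}$, so one must argue more carefully). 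Concretely, I would pass to the projection: any $x\in C_M$ can be connected inside $\mathcal{K}_X$ to a point of $\mathcal{K}_X\cap\iota(C_T)$ without crossing a wall of $\Delta(M)$, because $\mathcal{K}_X$ is itself an open convex cone not meeting any such wall and, by hypothesis, $\mathcal{K}_X\cap\iota(C_T)$ is nonempty (it equals $\iota(K(T))\neq\emptyset$). Hence $x$ and that point lie in the same chamber $K'$ of $C_M$. Now the point of $\mathcal{K}_X\cap\iota(C_T)=\iota(K(T))$ lies in $K(T)$ and in $\mathcal{K}_X$; since $K(M)$ was chosen with $\iota(K(M))\cap\mathcal{K}_X\neq\emptyset$ and $K(M)\cap(T\otimes\mathbb{R})\subset K(T)$, the chamber $K(M)$ is the unique chamber of $C_M$ adjacent to $\mathcal{K}_X$ whose trace on $T\otimes\mathbb{R}$ falls in $K(T)$; any other chamber $K'$ adjacent to $\mathcal{K}_X$ with a point of $\mathcal{K}_X\cap\iota(C_T)$ in its closure must have $K'\cap(T\otimes\mathbb{R})\subset K(T)$ as well, forcing $K'=K(M)$ since two distinct chambers of $C_M$ adjacent to the single connected open set $\mathcal{K}_X$ and agreeing on $T\otimes\mathbb{R}$ coincide. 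Therefore $x\in\iota(K')=\iota(K(M))$, which gives $\mathcal{K}_X\cap\iota(C_M)\subset\iota(K(M))$ and hence equality.

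The main obstacle I anticipate is the last uniqueness step: making rigorous the claim that the distinguished chamber $K(M)$ is characterized among chambers of $C_M$ adjacent to $\mathcal{K}_X$ by its trace on $T\otimes\mathbb{R}$ lying in $K(T)$. This is really a statement about the combinatorics of the two hyperplane arrangements $\{\delta^{\perp}\}_{\delta\in\Delta(M)}$ and $\{\delta^{\perp}\}_{\delta\in\Delta(T)=\Delta(M)\cap T}$ inside $C_M$, together with the fact that $\mathcal{K}_X$ — being a chamber of the finer arrangement $\Delta(X)$ in $\mathcal{C}_X$ by Theorem \ref{thm: AmerikVerbitsky} — is connected and avoids all of them. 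I would handle it by noting that $\mathcal{K}_X\cap\iota(C_M)$ is itself connected (intersection of two convex cones) and meets no wall of $\Delta(M)$, so it is contained in a single chamber $K'$ of $C_M$; then $\iota(K(M))\cap\mathcal{K}_X\neq\emptyset$ forces $K'=K(M)$ directly, and the $K(T)$-generality hypothesis is what guarantees $\mathcal{K}_X\cap\iota(C_M)$ is nonempty and, restricted to $T$, lands in the right chamber — so in fact the argument simplifies to: $\mathcal{K}_X\cap\iota(C_M)$ is connected, avoids $\bigcup_{\delta\in\Delta(M)}\delta^{\perp}$, and contains a point of $\iota(K(M))$, hence equals $\iota(K(M))\cap\mathcal{K}_X=\iota(K(M))$, the last equality because $\iota(K(M))\subset\iota(C_M)$ and $\iota(K(M))\subset\mathcal{K}_X$ by the definition of the distinguished chamber together with $K(T)$-generality propagated through $K(M)\cap(T\otimes\mathbb{R})\subset K(T)$.
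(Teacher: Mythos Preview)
You have the two inclusions in the definition of $K(M)$-generality reversed, and this causes the argument to miss the actual content of the lemma. The inclusion $\mathcal{K}_X\cap\iota(C_M)\subset\iota(K(M))$ is the one that \emph{always} holds once $K(M)$ is chosen to meet $\mathcal{K}_X$: indeed $\mathcal{K}_X\cap\iota(C_M)$ is convex (hence connected), avoids every hyperplane $\delta^{\perp}$ with $\delta\in\Delta(M)\subset\Delta(X)$, and meets $\iota(K(M))$, so it sits in that single chamber. Your connectedness argument proves exactly this direction, correctly --- but it uses no hypothesis on $T$ at all. The substantive inclusion is the other one, $\iota(K(M))\subset\mathcal{K}_X$: it says every class in the chamber $K(M)$ is actually K\"ahler, and this can fail precisely when some wall $\lambda\in\Delta(X)$ with $\phi(\lambda)\notin\Delta(M)$ slices through $\iota(K(M))$. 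Your parenthetical claiming this direction ``always holds'' is where the error enters.

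At the end you try to recover $\iota(K(M))\subset\mathcal{K}_X$ from ``$K(T)$-generality propagated through $K(M)\cap(T\otimes\mathbb{R})\subset K(T)$'', but that chain only yields $\iota\bigl(K(M)\cap(T\otimes\mathbb{R})\bigr)\subset\iota(K(T))\subset\mathcal{K}_X$, i.e.\ the part of $K(M)$ lying in $T\otimes\mathbb{R}$ consists of K\"ahler classes. It says nothing about points of $K(M)$ outside $T\otimes\mathbb{R}$. The paper closes this gap by arguing the contrapositive: if $\iota(K(M))\not\subset\mathcal{K}_X$, one produces $\lambda\in\Delta(X)$ with $\lambda^{\perp}\cap\iota(K(M))\neq\emptyset$ and $\phi(\lambda)\notin\Delta(M)$, and then uses $K(M)\cap(T\otimes\mathbb{R})\subset K(T)$ together with $\Delta(T)\subset\Delta(M)$ to conclude that $\lambda^{\perp}$ also meets $\iota(K(T))$, contradicting $K(T)$-generality. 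That passage from a wall cutting $K(M)$ to the same wall cutting $K(T)$ is the step your proposal does not supply.
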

	\begin{proof}
		Suppose that $(X, \phi)$ is not $K(M)$-general: $\iota(\mathcal{C}_M)\cap\mathcal{K}_X$ is then a proper subset of $\iota(K(M))$. By \Cref{thm: AmerikVerbitsky} there exists $\lambda\in\Delta(X)$ such that $\lambda^{\perp}\cap\iota(K(M))\neq\emptyset$ and $\phi(\lambda)\notin\Delta(M)$. But remember that $\Delta(M)\supset\Delta(T)$ and by our choice of the chambers $K(M)\cap(T\otimes \mathbb{R})\subset K(T)$; then $\phi(\lambda)\notin\Delta(T)$ and $\lambda^{\perp}\cap\iota(K(T))\supset\lambda^{\perp}\cap\iota( K(M)\cap(T\otimes \mathbb{R}))\neq\emptyset$ implying that $(X, \phi)$ is not $K(T)$-general.
	\end{proof}
	\begin{rmk}
		The converse to the previous statement is not true. A counterexample is given by the IHS fourfold $\hat{\Sigma}^{[2]}$ in \Cref{sec: A2}. In fact, \Cref{prop:K-gen} shows that $(\hat{\Sigma}^{[2]}, \eta)$ is not $K(T_{\delta_1})$-general while it is clearly $K(M)$-general by \cite[Lemma 5.2]{BCS_complex}.
	\end{rmk}
	So, we  give the following more general definition.
	\begin{dfn}\label{def:Ktype}
		Let $K$ be a (connected and open) subset of a chamber $K(T)$ such that $K(T)\supset\phi(\mathcal{K}_X)$, with $\mathcal{K}_X$ denoting the Kähler cone of a $(T,j)$-polarized IHS manifold $(X, \phi)$. Then we say that $X$ has a Kähler cone section of $K$-type if $K=\phi(\mathcal{K}_X)\cap \mathcal{C}_T$. 
	\end{dfn}
	\begin{rmk}
		Note that if we choose a subset $K$ which is not connected or not open then no IHS manifold satisfies the above definition as its Kähler cone is connected and open.
	\end{rmk} 
	If the subset $K$ chosen in \Cref{def:Ktype} is a proper subset of $K(T)$ any IHS manifold $X$ with a Kähler section cone of $K$-type will not be $K(T)$-general. The following characterization gives us a link between the two definitions.
	\begin{prop}\label{prop:T-M polarizations}
		Let $(X,\phi)$ be a $(T,j_T)$-polarized IHS manifold of type $L$. If $X$ has a Kähler cone section of $K$-type then there exist:
		\begin{enumerate}[i)]
			\item a lattice $M$ with an embedding $j_M:M\hookrightarrow L$;
			\item a primitive embedding $\tilde{\iota}: T\hookrightarrow M$;
			\item a chamber $K(M)$, i.e. a connected component of $\mathcal{C}_M\setminus\bigcup_{\delta\in\Delta(M)}\delta^{\perp}$, with $K(M)\cap \mathcal{C}_T=K$;
		\end{enumerate}  such that $(X,\phi)$ is a $K(M)$-general, $(M, j_M)$-polarized IHS manifold.\newline Conversely, if $(X,\phi)$ is a $K(M)$-general, $(M, j_M)$-polarized IHS manifold with $T\subset M$ then it has a Kähler cone section of $K$-type with $K=K(M)\cap \mathcal{C}_T$. 
	\end{prop}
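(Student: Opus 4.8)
The argument splits into the two implications, and in both the only non-formal input is the Amerik--Verbitsky chamber description of the K\"ahler cone (Theorem \ref{thm: AmerikVerbitsky}), fed through the elementary identities $C_T=C_M\cap(T\otimes\mathbb{R})$ and $\Delta(T)=\Delta(M)\cap T$ recalled before Lemma \ref{lemma:Subsetgenerality}. Throughout I will suppress the markings, identifying $\Pic(X)$ with its image under $\phi$, as is done elsewhere in the paper.

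\emph{Forward implication.} Assume $(X,\phi)$ is $(T,j_T)$-polarized with a K\"ahler cone section of $K$-type, so $K=\phi(\mathcal{K}_X)\cap C_T$. The plan is to take $M:=\NS(X)=\Pic(X)$, with $j_M:=\phi|_{\NS(X)}\colon M\hookrightarrow L$ — a primitive embedding, since $\NS(X)$ is primitive in $H^2(X,\mathbb{Z})$, of signature $(1,t)$ because $X$ is projective — and $\tilde\iota\colon T\hookrightarrow M$ the primitive embedding $\iota_T$ coming from the given $(T,j_T)$-polarization; then $\phi\circ\iota_T=j_T$ reads $j_M\circ\tilde\iota=j_T$, and $(X,\phi)$ is $(M,j_M)$-polarized via the identity embedding $\NS(X)\hookrightarrow\Pic(X)$. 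For the chamber I would take $K(M)$ to be the ample cone $\mathcal{K}_X\cap\NS(X)_{\mathbb{R}}$. The one point needing care is that this is a \emph{full} connected component of $C_M\setminus\bigcup_{\delta\in\Delta(M)}\delta^{\perp}$: every integral MBM class lies in $\NS(X)$, so $\Delta(M)=\Delta(X)$, and for $x\in\NS(X)_{\mathbb{R}}$ and $\delta\in\Delta(X)$ one has $x\in\delta^{\perp}$ iff $x\in\delta^{\perp}\cap\NS(X)_{\mathbb{R}}$; hence a wall-avoiding path inside $\NS(X)_{\mathbb{R}}$ is wall-avoiding in $H^{1,1}(X,\mathbb{R})$, and restricting the decomposition of Theorem \ref{thm: AmerikVerbitsky} to $\NS(X)_{\mathbb{R}}$ exhibits the ample cone as one chamber. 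With this choice $\iota(K(M))=\mathcal{K}_X\cap\NS(X)_{\mathbb{R}}=\mathcal{K}_X\cap\iota(C_M)$, i.e. $(X,\phi)$ is $K(M)$-general (Definition \ref{def:K-gen}); and since $C_T\subset\NS(X)_{\mathbb{R}}$,
\[
K(M)\cap C_T=\big(\mathcal{K}_X\cap\NS(X)_{\mathbb{R}}\big)\cap C_T=\mathcal{K}_X\cap C_T=\phi(\mathcal{K}_X)\cap C_T=K,
\]
which is exactly condition iii).

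\emph{Converse.} Assume $(X,\phi)$ is $K(M)$-general and $(M,j_M)$-polarized with $T\subset M$ primitive. By Definition \ref{def:K-gen}, $\iota(K(M))=\mathcal{K}_X\cap\iota(C_M)$, i.e. $K(M)=\phi(\mathcal{K}_X)\cap C_M$ inside $M\otimes\mathbb{R}$. Intersecting with $T\otimes\mathbb{R}$ and using $C_T=C_M\cap(T\otimes\mathbb{R})$ gives $K:=K(M)\cap C_T=\phi(\mathcal{K}_X)\cap C_T$, which is convex — hence connected — and open in $C_T$. Since $\Delta(T)=\Delta(M)\cap T\subseteq\Delta(M)$, the set $K$ avoids $\delta^{\perp}$ for every $\delta\in\Delta(T)$, so it lies in a unique chamber $K(T)$ of $C_T\setminus\bigcup_{\delta\in\Delta(T)}\delta^{\perp}$, namely the one compatible with $\phi(\mathcal{K}_X)$; all the requirements of Definition \ref{def:Ktype} are then met, and $X$ has a K\"ahler cone section of $K$-type with $K=K(M)\cap C_T$.

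The remaining verifications — primitivity and signature of the embeddings, the two compatibility identities, convexity of a chamber cut by a linear subspace — are routine. The only step with real content is the claim in the forward implication that $\mathcal{K}_X\cap\NS(X)_{\mathbb{R}}$ is a full connected component (and not merely a connected piece) of $C_M\setminus\bigcup_{\delta\in\Delta(M)}\delta^{\perp}$, i.e. that the wall structure on $\NS(X)_{\mathbb{R}}$ is the restriction of the one on $H^{1,1}(X,\mathbb{R})$; this is exactly where $\Delta(M)=\Delta(X)$ and Theorem \ref{thm: AmerikVerbitsky} are genuinely used.
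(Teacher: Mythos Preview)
Your proof is correct. For the forward implication you make the maximal choice $M=\NS(X)$; the paper instead constructs a lattice $M$ of \emph{minimal} rank, namely the primitive closure inside $\phi(\Pic(X))$ of the sublattice generated by $T$ together with the MBM classes $\Lambda\subset\Delta(X)$ that cut out the actual walls of $\mathcal{K}_X$ (what the paper calls ``extremal rays''). Both routes rely on Theorem~\ref{thm: AmerikVerbitsky} in the same way, and the Remark immediately following the proof explicitly records that any $M'$ with $M\subset M'\subset\Pic(X)$ works as well via Lemma~\ref{lemma:Subsetgenerality} --- in particular your choice $M'=\Pic(X)$. Your argument is the shorter and more transparent one; the paper's extra work buys a sharper output (the smallest $M$ for which $K(M)$-generality holds), which is not actually used downstream but clarifies how little structure beyond $T$ is being imposed. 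The converse implications are essentially identical in the two proofs.
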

	\begin{proof}
		By \Cref{thm: AmerikVerbitsky} the Kähler cone of $X$ is a connected component of $\mathcal{C}_X\setminus\mathcal{H}_{\Delta}$, so denote with $\Lambda\subset\Delta(X)$ the set of those MBM classes $\lambda$ for which $\lambda^{\perp}$ is an extremal ray, i.e. if $\alpha,\beta\in \mathcal{C}_X$ are such that $\alpha+\beta\in \lambda^{\perp}$ then $\alpha, \beta\in\lambda^{\perp}$. Define $M$ as a lattice of minimal rank containing $T$ and $\phi(\Lambda)$ and such that every embedding in the chain of inclusions $T\subset M\subset M':=\phi(\Pic(X))$ is primitive and use the first to define $\tilde{\iota}$; fix an embedding $j_M:M\hookrightarrow L$ such that $(j_M)_{|T}=j_T$. Then by construction there exists a chamber $K(M)$ of $\mathcal{C}_M\setminus\mathcal{H}_{\Delta(M)}$ such that $(X, \phi)$ is $(M, j_M)$-polarized with $\iota_M:=j_M\circ\phi^{-1}$ and $K(M)$-general. Moreover, denoting with $\iota_T$ the embedding $T\hookrightarrow \Pic(X)$ we obtain $\iota_M (K(M)\cap \mathcal{C}_T)= \mathcal{K}_X\cap \iota_T(\mathcal{C}_T)= \iota_M(K)$ by hypothesis and injectivity of $\iota_M$, therefore the first part of the statement is proved. The second one is obvious using the fact that $\iota_T$ and $\iota_M$ are injective and that by construction $\iota_T=(\iota_M)_{|_T}$. Indeed $\iota_T(K)=\iota_T(K(M)\cap \mathcal{C}_T)=\iota_M(K(M))\cap\iota_T(\mathcal{C}_T)=\mathcal{K}_X\cap\iota_T(\mathcal{C}_T)$.
	\end{proof}
	\begin{rmk}
		Note that in the proof of the above proposition we chose to exhibit a minimal $M\supset T$ for which the statement holds. In fact, the same holds for every lattice $M'\supset M\supset T$ for which $(X, \phi)$ admits an $(M', j')$-polarization by \Cref{lemma:Subsetgenerality}. In particular it holds for $M'\simeq\Pic(X)$.
	\end{rmk}
	We now fix a chamber $K$ and $(X, \eta)$ a $(T, j_T)$-polarized IHS manifold with a Kähler cone section of $K$-type. According to \Cref{prop:T-M polarizations} we can find a pair $(M, j_M)$ such that $(X, \eta)$ is $(M, j_M)$-polarized, $K(M)$-general IHS manifold, with $K(M)\cap  \mathcal{C}_T=K$. We define $N:=j_M(M)^{\perp}$ and $S:= j_T(T)^{\perp}$. We can then consider the period map of the moduli space $\mathcal{M}_{K(M), j_M}$ of $(M, j_M)$-polarized IHS manifolds of type $L$ which are $K(M)$-general \begin{displaymath}
		\mathcal{P}_{K(M)}: \mathcal{M}_{K(M), j_M}\to \Omega^{M, j_M}\setminus\left(\mathcal{H}_{M} \cup \mathcal{H}'_{K(M)}\right)
	\end{displaymath} where a marked pair $(X, \phi)$ is sent to $\phi(H^{2,0}(X))$, which is an isomorphism by \cite[Theorem 3.13]{Camere_lattice_polarized_rmks}. \newline 
	We define the subfamily $\mathscr{F}_{j_M, K}^{T}$ of the $(T, j_T)$-polarized IHS manifold of type $L$ with the Kähler cone section of type $K$ and embedding $j_M$. Then we state the following theorem.
	\begin{thm}\label{thm:bijection}
		The period map $\mathcal{P}_{K(M)}$ restricted to $\mathscr{F}_{j_M, K}^{T}$ defines a bijection with $$\Omega_{j_M, K}^{T}:=\left\{\omega\in\mathbb{P}(N_\mathbb{C})\mid (\omega, \bar{\omega})>0, \, q(\omega)=0\right\}\setminus \left((\mathcal{H}_{S}\cap N_\mathbb{C}) \cup \mathcal{H}'_{K(M)}\right)$$ with $\mathcal{H}_{S_{\delta_1}}:=\cup_{\nu\in\Delta(S)}H_\nu$ and $\mathcal{H}'_{K(M)}:=\cup_{\nu\in\Delta'(K(M))}H_\nu$.
	\end{thm}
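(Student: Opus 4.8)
\emph{Proof proposal.} The plan is to reduce the statement to Camere's period isomorphism for $K(M)$-general $(M,j_M)$-polarized manifolds and then translate the description of its target period domain from the lattice $N$ to the lattice $S$.

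First I would recast the family $\mathscr{F}_{j_M,K}^{T}$. By Proposition \ref{prop:T-M polarizations} (in both directions), a $(T,j_T)$-polarized IHS manifold $(X,\phi)$ of type $L$ belongs to $\mathscr{F}_{j_M,K}^{T}$ if and only if it is a $K(M)$-general, $(M,j_M)$-polarized manifold with $K(M)\cap C_T=K$; thus $\mathscr{F}_{j_M,K}^{T}$ is precisely the moduli space $\mathcal{M}_{K(M),j_M}$ of $K(M)$-general $(M,j_M)$-polarized manifolds, viewed through its underlying $(T,j_T)$-polarization. Since such an $X$ is $(M,j_M)$-polarized, the period $\phi(H^{2,0}(X))$ is orthogonal to $j_M(M)\subset\phi(\Pic(X))$, hence lies in $\mathbb{P}(N_{\mathbb{C}})$, so $\mathcal{P}_{K(M)}$ is defined on all of $\mathscr{F}_{j_M,K}^{T}$. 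By \cite[Theorem 3.13]{Camere_lattice_polarized_rmks} the period map is an isomorphism
\[
\mathcal{P}_{K(M)}\colon \mathcal{M}_{K(M),j_M}\ \xrightarrow{\ \sim\ }\ \Omega^{M,j_M}\setminus\bigl(\mathcal{H}_{M}\cup\mathcal{H}'_{K(M)}\bigr),
\]
where $\Omega^{M,j_M}=\{\omega\in\mathbb{P}(N_{\mathbb{C}})\mid q(\omega)=0,\ (\omega,\bar\omega)>0\}$; in particular it is a bijection onto that set, and both injectivity and surjectivity of the restriction to $\mathscr{F}_{j_M,K}^{T}$ are inherited from it.

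It then remains to identify the target with $\Omega_{j_M,K}^{T}$, i.e. to show $(\mathcal{H}_{S}\cap N_{\mathbb{C}})\cup\mathcal{H}'_{K(M)}=\mathcal{H}_{M}\cup\mathcal{H}'_{K(M)}$ inside $\Omega^{M,j_M}$. The inclusion $\mathcal{H}_{M}\subset\mathcal{H}_{S}\cap N_{\mathbb{C}}$ is immediate, since $N\subset S$ (as $T\subset M$), so a wall divisor of $L$ lying in $N$ also lies in $S$. For the reverse inclusion I would take a wall divisor $\nu\in S$ with $\nu^{\perp}\cap\Omega^{M,j_M}\neq\emptyset$ and decompose it along $S_{\mathbb{Q}}=(M\cap T^{\perp})_{\mathbb{Q}}\oplus N_{\mathbb{Q}}$ as $\nu=\nu_0+\nu_N$ (its $T$-component vanishing because $\nu\perp T$). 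If $\nu_0=0$ then $\nu\in N$ and $\nu^{\perp}$ contributes to $\mathcal{H}_{M}$. If $\nu_0\neq 0$: since $T$ carries the unique positive direction of $M$, the lattice $M\cap T^{\perp}$ is negative definite, so $\nu_0^2<0$; moreover $M\cap T^{\perp}$ contains no $(-2)$-class and no $(-10)$-class of divisibility $2$ (such a class would, being orthogonal to $T$, be a wall divisor of every member of $\mathscr{F}_{j_M,K}^{T}$ whose orthogonal contains $\iota_T(C_T)$, and by Theorem \ref{thm: AmerikVerbitsky} this would force $K=\emptyset$, contrary to the setup), so $\nu_N\neq 0$; finally a point of $\Omega^{M,j_M}$ spans a positive-definite real $2$-plane of $N$, so $\nu^{\perp}\cap\Omega^{M,j_M}=\nu_N^{\perp}\cap\Omega^{M,j_M}\neq\emptyset$ forces $\nu_N^2<0$. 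Hence $\nu$ is a wall divisor of $L$ with $\nu_M^2=\nu_0^2<0$ and $\nu_N^2<0$, i.e. $\nu\in\Delta'$ for the splitting $M\oplus N$, so $\nu^{\perp}\subset\mathcal{H}'_{K(M)}$. This yields the set equality and hence the claimed bijection $\mathscr{F}_{j_M,K}^{T}\to\Omega_{j_M,K}^{T}$.

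I expect the last step to be the main obstacle: one has to check that the additional hyperplanes appearing in the $\mathcal{H}_{S}$-description (those coming from $(-2)$- and divisibility-$2$, $(-10)$-classes of $S$ not contained in $N$) have in fact already been excised through $\mathcal{H}'_{K(M)}$. This rests on the negative-definiteness of $M\cap T^{\perp}$, on the absence of wall classes inside it — which is exactly where the hypothesis that $K$ is a genuine, nonempty Kähler cone section of $K$-type enters — and on the signature argument forcing the $N$-component of such a class to have negative square as soon as its orthogonal meets the period domain.
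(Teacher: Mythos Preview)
Your proof is correct and follows essentially the same strategy as the paper: both identify $\mathscr{F}_{j_M,K}^{T}$ with $\mathcal{M}_{K(M),j_M}$ via Proposition~\ref{prop:T-M polarizations} and then invoke Camere's period isomorphism. The only difference lies in how the discrepancy between $\mathcal{H}_M$ and $\mathcal{H}_S\cap N_{\mathbb{C}}$ is handled. The paper argues geometrically that the image of $\mathscr{F}_{j_M,K}^{T}$ avoids $\mathcal{H}_S\cap N_{\mathbb{C}}$ directly (a wall divisor $\nu\in S$ landing in $\NS(X)$ would be orthogonal to every class in $\iota_T(C_T)$, hence to a K\"ahler class, contradicting Theorem~\ref{thm: AmerikVerbitsky}), and uses only the trivial inclusion $\mathcal{H}_M\subset\mathcal{H}_S\cap N_{\mathbb{C}}$ for surjectivity. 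You instead prove the full set-theoretic equality $(\mathcal{H}_S\cap N_{\mathbb{C}})\cup\mathcal{H}'_{K(M)}=\mathcal{H}_M\cup\mathcal{H}'_{K(M)}$ inside $\Omega^{M,j_M}$ by decomposing each $\nu\in\Delta(S)$ along $(M\cap T^{\perp})\oplus N$ and showing that the mixed case lands in $\Delta'$. Your approach is slightly more elaborate but has the merit of making the lattice-theoretic content explicit; the key step in both --- that $M\cap T^{\perp}$ contains no wall class, because such a class would force $K=\emptyset$ --- is the same observation phrased two ways. One small point: in your Case~2 you should also note that $\nu_M\in(M\cap T^{\perp})$ implies $\nu_M^{\perp}\supset K\neq\emptyset$, so that $\nu_M^{\perp}$ genuinely meets $K(M)$, in case $\Delta'(K(M))$ carries a $K(M)$-dependence; this is automatic from your setup but worth making explicit.
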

	\begin{proof}
		Consider the period map $\mathcal{P}_{K(M)}$. If we restrict this map to those IHS manifolds which are in  $\mathscr{F}_{j_M, K}^{T}$ then the image cannot lie in $\mathcal{H}_{S}\cap N_\mathbb{C}$ because if there existed $\nu^{\perp}\in \mathcal{H}_{S}$ such that $\omega:=\phi^{-1}(H^{2,0}(X))\in H_\nu$ we would have $\phi^{-1}(\nu)\in \NS(X)$ by definition of $\NS(X)$ as the orthogonal complement of $H^{2,0}(X)$ in $H^2(X,\mathbb{Z})$. Therefore $\eta^{-1}(\nu)$ would be a wall divisor yielding a contradiction using the same argument of \Cref{rmk:ample}.
		Being $\mathcal{P}_{K(M)}$ an injective morphism, in order to get a bijection we just need to show what is the image. Take a period $\omega\in\Omega_{j_M, K}^{T}$ and consider $(X,\phi)=\mathcal{P}_{K(M)}^{-1}(\omega)$ then it is immediate to see that $(X,\phi)$ is indeed an element of $\mathscr{F}_{j_M, K}^{T}$ as it is $K(M)$-general and by \Cref{prop:T-M polarizations} it has a Kähler cone section of $K$-type.
	\end{proof}
	\section{A moduli space for the singularity of type $A_2$}\label{sec:A2works}
	In this section we give the proof of \Cref{thm:main} for $\Delta_3^{A_2}$ and continue the description started at the end of \Cref{sec: A2}, therefore we will use the same notation resumed in the diagram \Cref{diagram}. Moreover, we recall that $T_{\delta_1}= U(3)\oplus\langle -2\rangle$ and $M= U\oplus A_2(-2)\oplus\langle -2\rangle$.\newline

	Let us define $K(M)$ as the connected component of $\mathcal{C}_M\setminus\cup_{\nu\in\Delta(M)}H_\nu$ which contains a Kähler class of $\hat{\Sigma}^{[2]}$, i.e. $\mathcal{K}_{\hat{\Sigma}^{[2]}}\subset \iota(K(M))\otimes \mathbb{R}$. We choose in a compatible way also $K(T_{\delta_1})$, i.e. $\mathcal{K}_{\hat{\Sigma}^{[2]}}\cap\iota(K(T_{\delta_1}))\neq\emptyset\neq\mathcal{K}_{\hat{\Sigma}^{[2]}}\cap\iota(K(M))$; we also choose $K:=K(M)\cap (T_{\delta_1}\otimes\mathbb{R})\subset K(T_{\delta_1})$. Moreover, in the case of $\hat{\Sigma}^{[2]}$, the map $i:M\to\Pic(\hat{\Sigma}^{[2]})$ is an isomorphism and therefore $i(K(M))=\mathcal{K}_{\hat{\Sigma}^{[2]}}$. So, we define the family $\mathscr{F}_{K, T_{\delta_1}}^{\rho_{\delta_1}, \zeta}\subset\mathcal{M}_{T_{\delta_1}}^{\rho_{\delta_1}, \zeta}$ of IHS manifolds in $\mathcal{M}_{T_{\delta_1}}^{\rho_{\delta_1}, \zeta}$ which have a Kähler cone section of $K$-type and admit an embedding $T_{\delta_1}\hookrightarrow M\hookrightarrow\Pic(X)$ compatible with the polarization, i.e. for which the commutative diagram (\Cref{diagram}) is defined.
	\begin{rmk}\label{remarkone Stevell}
		We do not want to choose the embedding of $M$ in $L$ in the data but only the embedding $j$ of $T_{\delta_1}$. It is an easy exercise using \cite[Proposition 1.15.1]{Nik1980} to see that there are only two possible non-isomorphic embeddings of $T_{\delta_1}$ in $L$ and they correspond respectively to the two possible non-isomorphic embeddings of $M$ in $L$ (see \Cref{esercizio}). Recall that we say that two embeddings $j_1$ and $j_2$ of $M$ in $L$ are isomorphic if there exists an automorphism $\varphi\in O(L)$ such that $j_2=\varphi\circ j_1$. Therefore we choose the embedding $j_k$ such that $j_k(T_{\delta_1})^\perp=j(T_{\delta_1})^\perp=: S_{\delta_1}\simeq U(3)\oplus U\oplus E_8(-1)^{\oplus 2}\supset \TR(\hat{\Sigma}^{[2]})$. Note that doing so $(\hat{\Sigma}^{[2]}, \eta)\in\mathscr{F}_{K, T_{\delta_1}}^{\rho_{\delta_1}, \zeta}$. 
	\end{rmk}
	\begin{prop}\label{prop:bijection}
		The period map $\mathcal{P}_{K(M)}$ restricted to $\mathscr{F}_{K, T_{\delta_1}}^{\rho_{\delta_1}, \zeta}$ defines a bijection with $$\Omega_{K, T_{\delta_1}}^{\rho_{\delta_1}, \zeta}:=\left\{\omega\in\mathbb{P}(N_\mathbb{C}(\zeta))\mid (\omega, \bar{\omega})>0\right\}\setminus \left((\mathcal{H}_{S_{\delta_1}}\cap N_\mathbb{C}) \cup \mathcal{H}'_{K(M)}\right)$$ with $\mathcal{H}_{S_{\delta_1}}:=\cup_{\nu\in\Delta(S_{\delta_1})}H_\nu$ and $\mathcal{H}'_{K(M)}:=\cup_{\nu\in\Delta'(K(M))}H_\nu$.
	\end{prop}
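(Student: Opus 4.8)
The plan is to derive the statement from Theorem \ref{thm:bijection} by restricting everything to the $\rho_{\delta_1}$-equivariant sublocus. Concretely, I would regard $\mathscr{F}_{K, T_{\delta_1}}^{\rho_{\delta_1}, \zeta}$ as the sublocus of the family $\mathscr{F}_{j_M, K}^{T_{\delta_1}}$ of Theorem \ref{thm:bijection} consisting of those marked pairs $(X,\phi)$ which, in addition, carry a non-symplectic automorphism $\sigma$ of order three with $\phi\circ\sigma^*\circ\phi^{-1}=\rho_{\delta_1}$; and I would regard $\Omega_{K, T_{\delta_1}}^{\rho_{\delta_1}, \zeta}$ as the sublocus of $\Omega_{j_M, K}^{T_{\delta_1}}$ of periods lying in the $\zeta$-eigenspace $N_{\mathbb{C}}(\zeta)$ of $\rho_{\delta_1}$ (here $N\subset S_{\delta_1}$ is $\rho_{\delta_1}$-stable because, by the compatible choice of $j_M$ made in the Remark preceding the statement, $\rho_{\delta_1}$ restricted to $S_{\delta_1}$ is $\rho$ and $\rho$ preserves $j_M(M)$). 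Since Theorem \ref{thm:bijection} already gives that $\mathcal{P}_{K(M)}$ restricts to a bijection $\mathscr{F}_{j_M, K}^{T_{\delta_1}}\to\Omega_{j_M, K}^{T_{\delta_1}}$, it suffices to check that this bijection carries the first sublocus onto the second.

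One inclusion is immediate: if $(X,\phi)\in\mathscr{F}_{K, T_{\delta_1}}^{\rho_{\delta_1}, \zeta}$, then $\sigma$ is non-symplectic, hence $\sigma^*\omega_X=\zeta\,\omega_X$ and $\phi(H^{2,0}(X))$ lies in the $\zeta$-eigenspace of $\rho_{\delta_1}$; being also contained in $N_{\mathbb{C}}$ (as $X$ is $(M,j_M)$-polarized) and already avoiding $(\mathcal{H}_{S_{\delta_1}}\cap N_{\mathbb{C}})\cup\mathcal{H}'_{K(M)}$ by Theorem \ref{thm:bijection}, it belongs to $\Omega_{K, T_{\delta_1}}^{\rho_{\delta_1}, \zeta}$. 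For the converse, take $\omega\in\Omega_{K, T_{\delta_1}}^{\rho_{\delta_1}, \zeta}$ and put $(X,\phi):=\mathcal{P}_{K(M)}^{-1}(\omega)$; by Theorem \ref{thm:bijection} this is a $K(M)$-general, $(M,j_M)$-polarized IHS manifold of type $L$ with a Kähler cone section of $K$-type, so the only thing missing is the automorphism. Set $\psi:=\phi^{-1}\circ\rho_{\delta_1}\circ\phi\in O(H^2(X,\mathbb{Z}))$. Since $\phi(\omega_X)=\omega\in N_{\mathbb{C}}(\zeta)$, one has $\psi(\omega_X)=\zeta\,\omega_X$, so $\psi$ is an integral Hodge isometry; and $\rho_{\delta_1}\in\Mon^2(L)$ because it is conjugate, through a marking, to $\hat\tau^{[2]\,*}\in\Mon^2(\hat\Sigma^{[2]})$ and $\Mon^2(L)$ is independent of the chosen marked pair in a connected component, whence $\psi\in\Mon^2(X)$ is a parallel transport operator.

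To conclude by the Hodge theoretic Torelli theorem I must show $\psi$ sends a Kähler class to a Kähler class, i.e. $\psi(\mathcal{K}_X)=\mathcal{K}_X$, and this is exactly where the notion of Kähler cone section of $K$-type enters. Because $\rho_{\delta_1}$ fixes $j(T_{\delta_1})$ pointwise, $\psi$ fixes $T_{\delta_1}\otimes\mathbb{R}$ pointwise; and since $X$ has a Kähler cone section of $K$-type, $\phi(\mathcal{K}_X)\cap C_{T_{\delta_1}}=K$ is a non-empty open cone, so its (non-empty) image in $\mathcal{K}_X$ consists of $\psi$-fixed classes. Hence $\psi$ maps a Kähler class to itself, so $\psi=\sigma^*$ for a unique $\sigma\in\Aut(X)$; from $\psi(\omega_X)=\zeta\,\omega_X$ the automorphism $\sigma$ is non-symplectic of order three, its invariant lattice is $\phi^{-1}(T_{\delta_1})$ (using that $\rho_{|S_{\delta_1}}$ has no non-zero fixed vector and $T_{\delta_1}$ is primitively embedded in $L$, so $T_{\delta_1}$ is the invariant lattice of $\rho_{\delta_1}$), and $\phi$ is a framing for $\sigma$ by construction. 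Therefore $(X,\phi,\sigma)\in\mathscr{F}_{K, T_{\delta_1}}^{\rho_{\delta_1}, \zeta}$ and maps to $\omega$, which finishes the proof. I expect the only genuinely non-formal point — everything else being a transcription of Theorem \ref{thm:bijection} — to be precisely this last verification that $\psi$ preserves $\mathcal{K}_X$: it is the failure of $K(T_{\delta_1})$-generality established in Proposition \ref{prop:K-gen} that prevents one from running the plain $(\rho,j)$-polarized argument of Sections \ref{sec:A1}--\ref{sec:A3A4}, and the definition of a Kähler cone section of $K$-type is tailored so that, $T_{\delta_1}$ being pointwise fixed by $\rho_{\delta_1}$, the argument nonetheless goes through.
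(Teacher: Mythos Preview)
Your proof is correct and follows essentially the same approach as the paper: reduce to Theorem \ref{thm:bijection}, note that the forward image lands in the $\zeta$-eigenspace, and for the converse pull back a period, define $\psi=\phi^{-1}\circ\rho_{\delta_1}\circ\phi$, check it is a Hodge isometry and a monodromy operator, and use the K\"ahler cone section of $K$-type to produce a $\psi$-fixed K\"ahler class so that Markman's Torelli theorem applies. Your write-up is slightly more explicit than the paper's (e.g.\ you justify $\rho_{\delta_1}\in\Mon^2(L)$ via the natural automorphism on $\hat\Sigma^{[2]}$ and spell out that the invariant lattice of $\sigma$ is $\phi^{-1}(T_{\delta_1})$), but the argument is the same.
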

	\begin{proof}
		This proposition states that we can apply \Cref{thm:bijection} also with the additional structure of the $(\rho_{\delta_1},T_{\delta_1})$-polarization. The image of $\mathcal{P}_{K(M)}$ lies in the eigenspace $\mathbb{P}(N_\mathbb{C}(\zeta))$ by definition of Picard group. 
		Take now a period $\omega\in\Omega_{M,K(M)}^{\rho_{\delta_1}, \zeta}$ and consider $(X,\phi)=\mathcal{P}_{K(M)}^{-1}(\omega)$, we want to show that on $X$ there exists an automorphism $\sigma$ satisfying the properties required in diagram (\Cref{diagram}). Define $\sigma^*:=\phi^{-1}\circ\rho_{\delta_1}\circ\phi$. It is an isomorphism of integral Hodge structures since $$\sigma^*(\omega_X):=\sigma^*(\mathcal{P}_{K(M)}^{-1}(\omega))=\phi^{-1}(\rho_{\delta_1}(\omega))=\zeta\omega_X.$$ Moreover, it is a parallel transport operator as $\rho_{\delta_1}\in\Mon^2(L)$. If it preserves also a Kähler class we can conclude with the Hodge theoretic Torelli \Cref{thm: hodge theoretic}. But $X$ has a Kähler cone section of $K$-type, therefore $$\mathcal{K}_X\cap i(T_{\delta_1})\supset \mathcal{K}_X\cap i(T_{\delta_1})\cap i(\mathcal{C}_M)= i(K(M))\cap i(T_{\delta_1})\neq \emptyset$$ thus, $\rho_{\delta_1}$ fixes a Kähler class. 
	\end{proof}
	
	\begin{rmk}
		Note that if $(X_1, \phi_1)$ and $(X_2, \phi_2)$ define the same point in $\mathcal{M}_{M, K(M)}$, i.e. there exists a biregular morphism $f: X_1\to X_2$ such that $\phi_1=\phi_2\circ f^*$ and $i_1=i_2\circ f^*$, then also $\sigma_1=f^{-1}\circ\sigma_2\circ f$ by \cite[Theorem 1.8]{brandhorstCattaneo}.
	\end{rmk}
	Let\begin{equation*}
		\begin{split}
			\Mon^2(M,j,\rho_{\delta_1}):=&\left\{g\in\Mon^2(L)\mid g(M)=M, \, g(t)=t,\right. \\ &\left.g\circ\rho_{\delta_1}=\rho_{\delta_1}\circ g, \, \forall t\in T_{\delta_1}\right\}
		\end{split}
	\end{equation*}  and denote its image in $O(N)$ via the restriction map with $\Gamma^{\rho_{\delta_1}}_{M, j}$. Note that $\Mon^2(M,j,\rho_{\delta_1})$ is the stabilizer of $\mathscr{F}_{K, T_{\delta_1}}^{\rho_{\delta_1}, \zeta}$ for the action of $\Mon^2(M, j)$ on $\mathcal{M}_{M, K(M, T_{\delta_1})}$. Moreover, by definition, the bijection defined in \Cref{prop:bijection} is equivariant with respect to the action of $\Mon^2(M,j,\rho_{\delta_1})$ and of $\Gamma^{\rho_{\delta_1}}_{M, j}$. Denoting $\mathscr{N}_{K, T_{\delta_1}}^{\rho_{\delta_1}, \zeta}:=\mathscr{F}_{K, T_{\delta_1}}^{\rho_{\delta_1}, \zeta}/\Mon^2(M,j,\rho_{\delta_1})$ we deduce the following corollary:
	\begin{corol}\label{corol:bijection}
		There exists a bijection between $\mathscr{N}_{K, T_{\delta_1}}^{\rho_{\delta_1}, \zeta}$ and $\Omega_{K, T_{\delta_1}}^{\rho_{\delta_1}, \zeta}/\Gamma^{\rho_{\delta_1}}_{M, j}.$
	\end{corol}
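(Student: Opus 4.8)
The plan is to deduce the corollary directly from Proposition~\ref{prop:bijection} together with the equivariance recorded in the remark just above it, by passing everything to quotients. So first I would fix notation: write $G:=\Mon^2(M,j,\rho_{\delta_1})$ and $H:=\Gamma^{\rho_{\delta_1}}_{M, j}$, and let $r\colon G\to O(N)$ denote the restriction homomorphism, whose image is \emph{by definition} exactly $H$; in particular $r$ is a surjection $G\twoheadrightarrow H$. By the remark preceding the statement, $G$ is the stabilizer of $\mathscr{F}_{K, T_{\delta_1}}^{\rho_{\delta_1}, \zeta}$ for the $\Mon^2(M,j)$-action, so $G$ acts on $\mathscr{F}_{K, T_{\delta_1}}^{\rho_{\delta_1}, \zeta}$ and $H$ acts on $\Omega_{K, T_{\delta_1}}^{\rho_{\delta_1}, \zeta}$, and the bijection $\mathcal{P}_{K(M)}$ of Proposition~\ref{prop:bijection} intertwines the two actions along $r$, i.e. $\mathcal{P}_{K(M)}(g\cdot x)=r(g)\cdot\mathcal{P}_{K(M)}(x)$ for every $g\in G$ and every $x\in\mathscr{F}_{K, T_{\delta_1}}^{\rho_{\delta_1}, \zeta}$.

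Next I would invoke the elementary principle that an equivariant bijection along a surjective homomorphism descends to a bijection of orbit spaces. Concretely, composing $\mathcal{P}_{K(M)}$ with the projection $\Omega_{K, T_{\delta_1}}^{\rho_{\delta_1}, \zeta}\to\Omega_{K, T_{\delta_1}}^{\rho_{\delta_1}, \zeta}/H$ yields a map that is constant on $G$-orbits (because $r(G)=H$), hence factors through $\mathscr{N}_{K, T_{\delta_1}}^{\rho_{\delta_1}, \zeta}=\mathscr{F}_{K, T_{\delta_1}}^{\rho_{\delta_1}, \zeta}/G$ as a well-defined map $\overline{\mathcal{P}}$. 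Surjectivity of $\overline{\mathcal{P}}$ is immediate from that of $\mathcal{P}_{K(M)}$. For injectivity I would argue: if $\overline{\mathcal{P}}([x_1])=\overline{\mathcal{P}}([x_2])$ then $\mathcal{P}_{K(M)}(x_2)=h\cdot\mathcal{P}_{K(M)}(x_1)$ for some $h\in H$; picking $g\in G$ with $r(g)=h$ and using equivariance gives $\mathcal{P}_{K(M)}(x_2)=\mathcal{P}_{K(M)}(g\cdot x_1)$, whence $x_2=g\cdot x_1$ by injectivity of $\mathcal{P}_{K(M)}$, so $[x_1]=[x_2]$ in $\mathscr{N}_{K, T_{\delta_1}}^{\rho_{\delta_1}, \zeta}$.

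The argument has essentially no moving parts once Proposition~\ref{prop:bijection} is available; the only points one has to double check — and the closest thing here to an obstacle — are that the $G$-action genuinely preserves the family $\mathscr{F}_{K, T_{\delta_1}}^{\rho_{\delta_1}, \zeta}$ (so that the quotient $\mathscr{N}_{K, T_{\delta_1}}^{\rho_{\delta_1}, \zeta}$ is meaningful) and that $r$ surjects onto $H$. Both, however, hold by the very way $G$ and $H$ were introduced — $G$ as the stabilizer of the family inside $\Mon^2(M,j)$, and $H$ as the image of the restriction map — so no real difficulty arises. Assembling these observations, $\overline{\mathcal{P}}\colon\mathscr{N}_{K, T_{\delta_1}}^{\rho_{\delta_1}, \zeta}\to\Omega_{K, T_{\delta_1}}^{\rho_{\delta_1}, \zeta}/\Gamma^{\rho_{\delta_1}}_{M, j}$ is the asserted bijection.
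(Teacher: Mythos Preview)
Your proof is correct and follows essentially the same approach as the paper: the paper simply notes that the bijection of Proposition~\ref{prop:bijection} is equivariant with respect to the actions of $\Mon^2(M,j,\rho_{\delta_1})$ and $\Gamma^{\rho_{\delta_1}}_{M, j}$, and deduces the corollary immediately. You have merely spelled out the standard quotient-by-equivariant-bijection argument in full detail.
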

	The whole formal construction made above is ``natural'' in the sense that it arises from the theory of $K3$ surfaces in a compatible way. Let us clarify this sentence. Consider a diagram similar to (\Cref{diagram}) where $(X, \phi)$ is, this time, a marked $K3$ surface such that the following commutative diagram is defined
	\begin{equation*}
		\begin{tikzcd}
			U(3)\ar[r, hook] \ar[rrd, hook, "j"] & U\oplus A_2(-2) \ar[r, hook, "i"]  & \Pic(X)\subset H^2(X,\mathbb{Z}) \ar[d, "\phi"] \ar[r, "\sigma^*"] & H^2(X,\mathbb{Z}) \ar[d, "\phi"] \\
			&& L_{K3} \ar[r, "\tilde{\rho}"] & L_{K3}.
		\end{tikzcd}
	\end{equation*}
	Where $\tilde{\rho}$ is $\rho_{\delta_1}$ restricted to $\langle -2\rangle^{\perp}$. Define the subfamily of the $K3$ surfaces with an ample $U(3)$-polarization for which there exists the above diagram as $\mathscr{K}^{\tilde{\rho}, \zeta}_{U(3), U\oplus A_2(-2)}$ and note that $\hat{\Sigma}\in\mathscr{K}^{\tilde{\rho}, \zeta}_{U(3), U\oplus A_2(-2)}$. Then with the same techniques of \Cref{prop:bijection} we can find a generalized version of \cite[Theorem 11.3]{DolgachevKondo} which applies to our subfamily and obtain the following proposition. We denote with \begin{equation*}
		\begin{split}
			\Gamma_{U(3), U\oplus A_2(-2)}^{\tilde{\rho}}:=\{\gamma\in O(L_{K3})\mid \  &\gamma_{|U\oplus A_2(-2)}\in O(U\oplus A_2(-2)), \\ &\gamma_{|U(3)}=\id \ \text{and} \ \gamma\circ\tilde{\rho}=\tilde{\rho}\circ\gamma\}.
		\end{split}
	\end{equation*}
	\begin{prop} \label{prop:K3A2}
		The period map defines a bijection between $\mathscr{K}^{\tilde{\rho}, \zeta}_{U(3), U\oplus A_2(-2)}$ and $$\Omega_{U(3), U\oplus A_2(-2)}^{\tilde{\rho}, \zeta}:=\left\{\omega\in\mathbb{P}(N_\mathbb{C}(\zeta))\mid (\omega, \bar{\omega})>0\right\}\setminus \left(\mathcal{H}_{S_{\delta_1}}\cap N_\mathbb{C}\right)$$ with $\mathcal{H}_{S_{\delta_1}}:=\cup_{\nu\in\Delta(S_{\delta_1})}H_\nu$. Moreover, this bijection descends to bijection at the level of isomorphism classes, i.e. $\Omega_{U(3), U\oplus A_2(-2)}^{\tilde{\rho}, \zeta}/ \Gamma_{U(3), U\oplus A_2(-2)}^{\tilde{\rho}}$ parametrizes isomorphism classes of $K3$ surfaces in $\mathscr{K}^{\tilde{\rho}, \zeta}_{U(3), U\oplus A_2(-2)}.$
	\end{prop}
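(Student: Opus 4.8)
The plan is to run the argument of Proposition \ref{prop:bijection} with $K3$ surfaces in place of IHS fourfolds of $K3^{[2]}$-type: the surjectivity of the period map and the strong Torelli theorem for $K3$ surfaces take the place of the corresponding IHS statements, and Namikawa's criterion (Theorem \ref{thm:namikawa}) takes the place of the use of $K(M)$-generality. Set $N:=(U\oplus A_2(-2))^{\perp}$ in $L_{K3}$. Since $U(3)$ is the invariant lattice of $\tilde{\rho}$, one has $S_{\delta_1}=(U(3))^{\perp}$ in $L_{K3}$, and because $U(3)$ embeds with finite index in $U(3)\oplus A_2(-2)\subset U\oplus A_2(-2)$ we get $N\subset S_{\delta_1}$; on the eigenspace $N_{\mathbb C}(\zeta)$ the relation $q(\omega)=0$ is automatic, as $q(\omega)=q(\tilde{\rho}\omega)=\zeta^{2}q(\omega)$ with $\zeta^{2}\neq 1$. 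Thus $\Omega_{U(3), U\oplus A_2(-2)}^{\tilde{\rho}, \zeta}$ is exactly the locus of periods that can occur.

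First I would show that the image of the period map lies in $\Omega_{U(3), U\oplus A_2(-2)}^{\tilde{\rho}, \zeta}$. For $(X,\phi)\in\mathscr{K}^{\tilde{\rho}, \zeta}_{U(3), U\oplus A_2(-2)}$ the period $\omega=\phi(H^{2,0}(X))$ is orthogonal to $i(U\oplus A_2(-2))\subset\Pic(X)$, hence $\omega\in N_{\mathbb C}$; it lies in the $\zeta$-eigenspace because $\sigma^{*}=\phi^{-1}\tilde{\rho}\,\phi$ acts on $\omega_X$ by $\zeta$; positivity $(\omega,\bar{\omega})>0$ is the second Hodge--Riemann relation; and $\omega\notin\mathcal{H}_{S_{\delta_1}}$, since a $(-2)$-class $\nu\in S_{\delta_1}$ with $\nu\perp\omega$ would be a $(-2)$-class in $\Pic(X)$, effective up to sign by Riemann--Roch, orthogonal to the ample class of $i(U(3))$ (because $\nu\perp U(3)$), contradicting ampleness exactly as in Remark \ref{rmk:ample}. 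Injectivity: if $(X_1,\phi_1)$ and $(X_2,\phi_2)$ have the same period then $g:=\phi_2^{-1}\circ\phi_1$ is a Hodge isometry with $g\sigma_1^{*}g^{-1}=\sigma_2^{*}$, carrying $i_1(U(3))$ to $i_2(U(3))$; after composing $g$ with the Weyl element furnished by strong Torelli (which I will need to check is compatible with the polarization data) it sends the ample class of $X_1$ to the ample class of $X_2$, hence is induced by an isomorphism $X_1\xrightarrow{\sim}X_2$ respecting the $K3$-analogue of diagram (\ref{diagram}).

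For surjectivity, given $\omega\in\Omega_{U(3), U\oplus A_2(-2)}^{\tilde{\rho}, \zeta}$ I would use surjectivity of the $K3$ period map to obtain a marked $K3$ surface $(X,\phi)$ with period $\omega$; then $\omega\perp U\oplus A_2(-2)$ yields a primitive embedding $U\oplus A_2(-2)\hookrightarrow\Pic(X)$, and I would apply Theorem \ref{thm:namikawa} to $G=\langle\tilde{\rho}\rangle$: condition (i) is clear, condition (iii$'$) holds because $\Lambda^{G}=U(3)$ is hyperbolic and $\omega\notin U(3)_{\mathbb C}$, and condition (ii) — that $S_{G,X}=S_{\delta_1}\cap\omega^{\perp}$ contains no $(-2)$-class — is precisely the requirement $\omega\notin\mathcal{H}_{S_{\delta_1}}$. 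Hence, after composing $\phi$ with a Weyl element, $\sigma^{*}:=\phi^{-1}\tilde{\rho}\,\phi$ is induced by an order-three automorphism $\sigma\in\Aut(X)$; averaging an ample class over $\langle\sigma\rangle$ produces an ample class in $U(3)_{\mathbb Q}$, so the $U(3)$-polarization is ample and $(X,\phi)\in\mathscr{K}^{\tilde{\rho}, \zeta}_{U(3), U\oplus A_2(-2)}$ has period $\omega$. The descent to isomorphism classes is then formal: the bijection is equivariant for the change-of-marking action of $\Gamma_{U(3), U\oplus A_2(-2)}^{\tilde{\rho}}$ on the source and the tautological action on the target, and two members of $\mathscr{K}^{\tilde{\rho}, \zeta}_{U(3), U\oplus A_2(-2)}$ are isomorphic exactly when their markings lie in one $\Gamma_{U(3), U\oplus A_2(-2)}^{\tilde{\rho}}$-orbit, so passing to quotients gives the asserted bijection with $\Omega_{U(3), U\oplus A_2(-2)}^{\tilde{\rho}, \zeta}/\Gamma_{U(3), U\oplus A_2(-2)}^{\tilde{\rho}}$.

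The step I expect to be the main obstacle is the ample-cone bookkeeping: controlling the Weyl-group ambiguity in the Torelli theorem so that the automorphism produced by Namikawa's theorem is really $\sigma^{*}$ and not a Weyl translate of it incompatible with the $U(3)$- and $(U\oplus A_2(-2))$-polarizations, i.e. checking that the relevant Weyl element can be absorbed into the marking while keeping the $K3$-analogue of diagram (\ref{diagram}) commutative. This is also the place where it matters that one removes the full locus $\mathcal{H}_{S_{\delta_1}}\cap N_{\mathbb C}$ and not merely the $(-2)$-walls of $N$, a point made transparent by the identification $S_{G,X}=S_{\delta_1}\cap\omega^{\perp}$.
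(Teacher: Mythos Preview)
Your proposal is correct and follows essentially the same approach as the paper. The paper itself signals this just before the proposition (``with the same techniques of Proposition \ref{prop:bijection}''), and its proof simply packages your Torelli/surjectivity/Namikawa argument by citing \cite[Theorems 10.2 and 11.2]{DolgachevKondo}: it first invokes the known bijection for $(\tilde{\rho},U(3))$-ample $K3$ surfaces with $\Omega_{U(3)}^{\tilde{\rho}}$, then restricts to the subfamily where $U\oplus A_2(-2)\hookrightarrow\Pic(X)$, landing in $N_{\mathbb C}$. The Weyl-group bookkeeping you flag as the main obstacle is precisely what the Dolgachev--Kond\={o} theory of (amply) lattice-polarized $K3$ surfaces absorbs; your direct route via Theorem \ref{thm:namikawa} is the same argument unwound, and the point that one must remove $\mathcal{H}_{S_{\delta_1}}\cap N_{\mathbb C}$ rather than only the $(-2)$-walls of $N$ is exactly the content of Namikawa's condition (ii), as you note.
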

	\begin{proof}
		Consider the period map for the $K3$ surfaces which are $(\tilde{\rho}, U(3))$-ample, by \cite[Theorem 11.2]{DolgachevKondo} it is a bijection with $$\Omega_{U(3)}^{\tilde{\rho}}:=\left\{\omega\in\mathbb{P}((S_{\delta_1}\otimes\mathbb{C})(\zeta))\mid (\omega, \bar{\omega})>0\right\}\setminus\mathcal{H}_{S_{\delta_1}}.$$ If we moreover consider the restriction to our subfamily then it is easy to see that \cite[Theorem 10.2]{DolgachevKondo} guarantees the assertion. 
	\end{proof}
	We can now state the following proposition which answers our question in a formal way. \begin{prop} Let $R_0=\Span(\delta_1, \delta_2, \rho(\delta_1), \rho(\delta_2))\simeq D_4(-1)$ be the degeneracy lattice relative to a period $\omega$ of a generic cubic threefold having a single singularity of type $A_2$. Then the $A_2$ locus $\Delta^{A_2}_3$ is birational to the moduli space $\mathscr{N}_{K, T_{\delta_1}}^{\rho_{\delta_1}, \zeta}$. \end{prop}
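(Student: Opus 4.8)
The plan is to run the same argument as in Propositions \ref{prop:A1}, \ref{prop:A3} and \ref{prop:A4}, but with $K(T_{\delta_1})$-generality replaced by the Kähler cone section of $K$-type machinery of Section \ref{sec:conesections}, and to factor the desired birational map through the moduli space of the associated $K3$ surfaces. First I would recall that, by \cite{ACT}, the extension of the period map to the stable discriminant locus assigns to a generic $C\in\Delta_3^{A_2}$ the period of its associated $K3$ surface $\hat{\Sigma}$ built in Section \ref{sec: A2}; equipped with the order three automorphism $\hat{\tau}$, the ample $U(3)$-polarization and the embedding $U(3)\hookrightarrow U\oplus A_2(-2)\simeq\Pic(\hat{\Sigma})$, this $\hat{\Sigma}$ lies in the family $\mathscr{K}^{\tilde{\rho}, \zeta}_{U(3), U\oplus A_2(-2)}$. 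Since by \cite{ACT} the stable discriminant locus maps isomorphically onto the corresponding nodal hyperplane arrangement inside the relevant ball quotient, the assignment $C\mapsto\hat{\Sigma}$ gives a birational map between $\Delta_3^{A_2}$ and the moduli space of isomorphism classes of elements of $\mathscr{K}^{\tilde{\rho}, \zeta}_{U(3), U\oplus A_2(-2)}$; that this map is dominant (hence birational) in the other direction as well follows from the dimension count of Appendix \ref{appendix} together with the fact that the generic member of $\mathscr{K}^{\tilde{\rho}, \zeta}_{U(3), U\oplus A_2(-2)}$ is of the form $\hat{\Sigma}$ for some generic $C\in\Delta_3^{A_2}$. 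By Proposition \ref{prop:K3A2} this moduli space is identified with $\Omega_{U(3), U\oplus A_2(-2)}^{\tilde{\rho}, \zeta}/\Gamma_{U(3), U\oplus A_2(-2)}^{\tilde{\rho}}$.

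Next I would set up the fourfold side. Sending $\hat{\Sigma}\mapsto\hat{\Sigma}^{[2]}$, endowed with its natural marking $\eta$, the natural automorphism $\hat{\tau}^{[2]}$, and the compatible chain of embeddings $T_{\delta_1}\hookrightarrow M\hookrightarrow\Pic(\hat{\Sigma}^{[2]})$ of diagram (\ref{diagram}), produces — as was checked at the end of Section \ref{sec: A2} and in Proposition \ref{prop:bijection} — a point $(\hat{\Sigma}^{[2]}, \eta)\in\mathscr{F}_{K, T_{\delta_1}}^{\rho_{\delta_1}, \zeta}$; this construction is compatible with isomorphisms, so it descends to a map with values in $\mathscr{N}_{K, T_{\delta_1}}^{\rho_{\delta_1}, \zeta}$, whose generic inverse is the rational map associating to an IHS fourfold its underlying $K3$ surface. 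By Corollary \ref{corol:bijection} the target $\mathscr{N}_{K, T_{\delta_1}}^{\rho_{\delta_1}, \zeta}$ is in bijection with $\Omega_{K, T_{\delta_1}}^{\rho_{\delta_1}, \zeta}/\Gamma^{\rho_{\delta_1}}_{M, j}$.

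It then remains to compare the two quotient period spaces. Since $M=U\oplus A_2(-2)\oplus\langle -2\rangle$ and the extra $\langle -2\rangle$ summand corresponds exactly to the exceptional class $\epsilon$ of the Hilbert--Chow morphism, the orthogonal complement $N=j_M(M)^{\perp}\subset L$ coincides with the orthogonal complement of $U\oplus A_2(-2)$ inside $L_{K3}$; hence $\Omega_{K, T_{\delta_1}}^{\rho_{\delta_1}, \zeta}$ and $\Omega_{U(3), U\oplus A_2(-2)}^{\tilde{\rho}, \zeta}$ are both open subsets of the same quasi-projective variety $\{\omega\in\mathbb{P}(N_\mathbb{C}(\zeta))\mid (\omega,\bar{\omega})>0\}\setminus(\mathcal{H}_{S_{\delta_1}}\cap N_\mathbb{C})$, differing only by the additional locus $\mathcal{H}'_{K(M)}$, a locally finite union of hyperplane sections and so of codimension at least one. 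Likewise $\Gamma^{\rho_{\delta_1}}_{M, j}$ and $\Gamma_{U(3), U\oplus A_2(-2)}^{\tilde{\rho}}$ both consist of the isometries of the ambient lattice that restrict to the identity on $U(3)$, commute with the relevant order three isometry, and preserve the $U\oplus A_2(-2)$ (resp. $M$) summand; under the identification of $N$ they give the same subgroup of $O(N)$. Therefore the two quotients are birational, and composing the birational maps
\[
\Delta_3^{A_2}\ \dashrightarrow\ \mathscr{K}^{\tilde{\rho}, \zeta}_{U(3), U\oplus A_2(-2)}\ \dashrightarrow\ \Omega_{K, T_{\delta_1}}^{\rho_{\delta_1}, \zeta}/\Gamma^{\rho_{\delta_1}}_{M, j}\ \dashrightarrow\ \mathscr{N}_{K, T_{\delta_1}}^{\rho_{\delta_1}, \zeta}
\]
gives the claim.

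The main obstacle, I expect, is the precise bookkeeping ensuring that this chain is a genuine birational \emph{equivalence} and not merely a dominant rational map one way: one must check that the generic element of $\mathscr{N}_{K, T_{\delta_1}}^{\rho_{\delta_1}, \zeta}$ is indeed a $\hat{\Sigma}^{[2]}$ arising from a cubic — this is exactly where the Kähler cone section condition built into $\mathscr{F}_{K, T_{\delta_1}}^{\rho_{\delta_1}, \zeta}$ does real work, since without it the $(-2)$- and $(-10)$-walls of Proposition \ref{prop:K-gen} and its corollary produce several non-isomorphic birational models over a single period — and that the arithmetic groups $\Gamma^{\rho_{\delta_1}}_{M, j}$ and $\Gamma_{U(3), U\oplus A_2(-2)}^{\tilde{\rho}}$ truly coincide under the identification of the period domains.
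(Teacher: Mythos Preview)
Your proposal is correct and follows essentially the same route as the paper: factor through the $K3$ moduli space $\mathscr{K}^{\tilde{\rho}, \zeta}_{U(3), U\oplus A_2(-2)}$ via \cite{ACT} and Proposition~\ref{prop:K3A2}, pass to the fourfold side using Corollary~\ref{corol:bijection}, and then compare the two period quotients. Your treatment of the comparison step is in fact more detailed than the paper's, which simply asserts that $\Omega_{K, T_{\delta_1}}^{\rho_{\delta_1}, \zeta}/\Gamma^{\rho_{\delta_1}}_{M, j}$ and $\Omega_{U(3), U\oplus A_2(-2)}^{\tilde{\rho}, \zeta}/\Gamma_{U(3), U\oplus A_2(-2)}^{\tilde{\rho}}$ are birational.
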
 
	\begin{proof}
		The proof has again the same structure of the proof of \Cref{prop:A1}, we review here the main points. Recall that the extension of the period map $\mathcal{P}^3:\mathcal{C}^{\text{sm}}_3\to\frac{\mathbb{B}^{10}\setminus\left(\mathcal{H}_n\cup \mathcal{H}_c\right)}{\mathbb{P}\Gamma}$ to the nodal locus is done by \cite{ACT} defining its period as the period of its associated $K3$ surface (see also \Cref{sec: riassunto ACT}). In our case the generic $A_2$ nodal cubic has the period 
		\begin{equation*}
			\mathcal{P}^3(C):= \mathcal{P}^{\tilde{\rho}, \zeta}_{U(3), U\oplus A_2(-2)}((\Sigma, \eta)).
		\end{equation*}
		The latter period map is the same of \Cref{prop:K3A2}. Following \Cref{prop:K3A2} this map yields an isomorphism between $\mathscr{K}^{\tilde{\rho}, \zeta}_{U(3), U\oplus A_2(-2)}$ and $\Omega_{U(3), U\oplus A_2(-2)}^{\tilde{\rho}, \zeta}$.
		
		As proven in \Cref{sec: A2} $(\hat{\Sigma}^{[2]}, \eta, \tau)$ defines a point in $\mathscr{F}_{K, T_{\delta_1}}^{\rho_{\delta_1}, \zeta}$. The period map in this space defines a bijection which, according to \Cref{corol:bijection}, descends to a bijection between $\mathscr{N}_{K, T_{\delta_1}}^{\rho_{\delta_1}, \zeta}$ and $\Omega_{K, T_{\delta_1}}^{\rho_{\delta_1}, \zeta}/\Gamma^{\rho_{\delta_1}}_{M, j}$.
		Note that $\Omega_{K, T_{\delta_1}}^{\rho_{\delta_1}, \zeta}/\Gamma^{\rho_{\delta_1}}_{M, j}$ and $$\Omega_{U(3), U\oplus A_2(-2)}^{\tilde{\rho}, \zeta}/ \Gamma_{U(3), U\oplus A_2(-2)}^{\tilde{\rho}}$$ are birational. So, in order to conclude the proof, it is enough to show that $\Delta^{A_2}_3$ is birational to the isomorphism classes of $K3$ surfaces in $\mathscr{K}^{\tilde{\rho}, \zeta}_{U(3), U\oplus A_2(-2)}$ as the claim of the proposition would follow through a composition of birational morphisms. But this is true as in \cite[Section 6]{ACT} the authors show that the discriminant locus maps isomorphically to its image, the nodal hyperplane arrangement, through the period map, therefore a generic point in $\Delta^{A_2}_3$ is mapped isomorphically to the period of a generic $K3$ surface in $\mathscr{K}^{\tilde{\rho}, \zeta}_{U(3), U\oplus A_2(-2)}$ and by \Cref{prop:K3A2} this is an isomorphism.
	\end{proof}
	
	\begin{rmk}
		Also Kond{\={o}} notes in \cite{Kondo} this family as a codimension 1 family in the moduli space of curves of genus 4 which consists of smooth curves with a vanishing theta null.
	\end{rmk}

\section*{Acknowledgments}

I would like to thank Samuel Boissière, Chiara Camere and Alessandra Sarti for having introduced me to this beautiful subject. This work is part of my PhD thesis and I would like to thank again my advisors, Chiara Camere and Alessandra Sarti, for their many hints, suggestions and constant support throughout the redaction. I would like to thank also Enrico Fatighenti for having read a preliminary version of this paper and the anonymous referee for the suggestions on how to improve the readability of this paper. I would like to thank a lot Cédric Bonnafé for having suggested and showed me how to use Springer theory to prove the existence of only one isometry, up to conjugation, of order three on $D_4$ and $E_6$ as done in \Cref{sec:ComplexReflection}. Moreover, I would like to thank Alice Garbagnati, Stevell Muller and Benedetta Piroddi for the helpful discussions. Finally, I would like to thank the Departments of Mathematics of ``Université de Poitiers'' and ``Università degli Studi di Milano'' for the fundings; I am also partially funded by the grant ``VINCI 2021-C2-104'' issued by the ``Université Franco Italienne''.

\appendix
 \section{Complex Reflection Groups and Springer Theory} \label{sec:ComplexReflection}
	In this section we will recall some notions about reflection groups and Springer theory. A deeper reference for this is \cite{broue} or \cite[Section 3]{bonnafe_sarti}.\newline

	Let $V$ be a complex vector space of $\dim_{\mathbb{C}}(V)=n$ and $W\subset GL_n(V)$ a finite subgroup of the complex general linear group of degree $n$. Given an element $g\in GL_n(V)$ we define the subset $V^g:=\left\{v\in V\mid g(v)=v\right\}\subset V$ of the elements fixed by $g$. Moreover we define also the set\begin{equation*}
		\text{Ref}(W):=\left\{s\in W\mid \dim(V^s)=n-1\right\}.
	\end{equation*}
	\begin{dfn}
		A finite subgroup $W\subset GL_n(V)$ is called a \emph{complex reflection group} if $W=\langle\text{Ref}(W)\rangle$.
	\end{dfn}
	A classic result, stated e.g. in \cite[Theorem 4.1]{broue}, is the following
	\begin{thm*}[Serre--Chevalley, Shepherd--Todd]
		Given a complex reflection group $W$ acting on a complex vector space $V$ of $\dim_{\mathbb{C}}(V)=n$ then there exist $f_1,\dots, f_n$ homogeneous polynomials of degree $d_1, \dots, d_n$ such that the invariant ring by the action of $W$ is given by
		\begin{equation*}
			\mathbb{C}[V]^W=\mathbb{C}[f_1, \dots, f_n].
		\end{equation*}
	\end{thm*}
	The family $\left\{f_1, \dots, f_n\right\}$ is not unique (up to permutations) but the degrees $\left\{d_1, \dots, d_n\right\}$ are uniquely determined (up to permutations) by $V$ and $W$. Moreover, a result by Solomon \cite[Theorem 4.44 and Section 4.5.4]{broue} implies that the graded $\mathbb{C}[V]^W$-module of $W$-invariant derivations of $\mathbb{C}[V]$ admits a homogeneous $\mathbb{C}[V]^W$-basis $(g_1, \dots, g_n)$ whose degrees $(d^*_1, \dots, d^*_n)$ are called co-degrees. Also the co-degrees are invariant up to permutation. \newline
	Now, in order to state the results that we need from Springer theory, we need to define the following numbers for any $e\in\mathbb{N}$
	\begin{equation*}
		\lambda(e):=|\left\{1\leq i\leq n\mid e \ \text{divides} \  d_i\right\}|
	\end{equation*}
	\begin{equation*}
		\lambda^*(e):=|\left\{1\leq i\leq n\mid e \ \text{divides} \  d^*_i\right\}|.
	\end{equation*}
	Moreover, if the primitive $e$-th root of unity $\zeta_e$ is a eigenvalue for the action of $w\in W$, we set $V(w, \zeta_e)$ to be the eigenspace of $V$ with respect to the action of $w$ relative to $\zeta_e$. 
	Then, putting together various results from \cite[Theorem C]{LehrerSpringer+2011+181+194} and \cite[Theorem 3.4, Theorem 4.2, Theorem 6.2]{Springer1974}, we state the following
	\begin{thm}[Springer, Lehrer-Springer] \label{thm:Springer}
		Let $W$ be a complex reflection group acting on $V$. Then for every $e\in\mathbb{N}$ it holds $\lambda(e)=\max_{w\in W} \dim(V(w, \zeta_e))$. If, moreover, $e$ is such that $\lambda(e)=\lambda^*(e)$, then the elements $w_e$ which attain the maximum define a unique conjugacy class in $W$.
	\end{thm}
	Let us explain the results in this section with a couple of examples.
	\begin{ex}[Isometries of order 3 on $D_4$]\label{IsomD4}
	From classical theory, that can be found e.g. in \cite[Chapter 4, Section 7]{ConwaySloane}, the group of lattice isometries of $D_4$ is $O(D_4)\simeq G_{28}\simeq W(F_4)=:W$ where $W(F_4)$ denotes the Coxeter group $F_4$ and $G_{28}$ is the 28-th group in the Shepherd--Todd classification. If we consider its action on $V=\mathbb{C}^4$ it is a complex reflection group. In order to compute the degrees and co-degrees one can use a computer algebra program like MAGMA or refer to \cite{bonnafe2018singular}. In any case we have that the degrees are \begin{equation*}
		(d_1, d_2, d_3, d_4)=(2, 6, 8, 12)
	\end{equation*} and the co-degrees are \begin{equation*}
		(d_1^*, d_2^*, d_3^*, d_4^*)=(0, 4, 6, 10).
	\end{equation*}
	So we can look at $\lambda(3)=\max_{w\in W} \dim (V(w, \zeta_3))=2=\lambda^*(3)$. Suppose that this maximum is attained in $w_3\in W= W(F_4)\subset GL_4(V)$. Therefore $w_3$ is an integer matrix which admits $\zeta_3$ as eigenvalue whose eigenspace is 2-dimensional, thus the same is true also for $\bar{\zeta_3}$. Moreover, as the triple $(W, V, 3)$ satisfies the hypothesis of \Cref{thm:Springer}, $w_3$ is unique up to conjugation. Wrapping up everything, we proved that there exists only one isometry of $D_4$, up to conjugation, of order three and without fixed points (the last statement comes from the fact that the only eigenvalues are $\zeta_3$ and $\bar{\zeta_3}$).
    \end{ex}
	\begin{ex}[Isometries of order 3 on $E_6$]\label{isomE_6}
	Here we want to apply again the same idea. Lattice isometries of $E_6$ are just $O(E_6)\simeq \mathbb{Z}/2\mathbb{Z}\times G_{35}\simeq \mathbb{Z}/2\mathbb{Z}\times W(E_6)$ (again it is a classical result which can be found on \cite[Chapter 4, Section 8.3]{ConwaySloane}) with the same convention as above. $W:=W(E_6)$ acts as a complex reflection group on $V=\mathbb{C}^6$. The degrees are
	\begin{equation*}
		(d_1, d_2, d_3, d_4, d_5, d_6)=(2, 5, 6, 8, 9, 12)
	\end{equation*}
	and the co-degrees are
	\begin{equation*}
		(d_1^*, d_2^*, d_3^*, d_4^*, d_5^*, d_6^*)=(0, 3, 4, 6, 7, 10).
	\end{equation*}
	Now $\lambda(3)=\max_{w\in W} \dim (V(w, \zeta_3))=3=\lambda^*(3)$ and we assume that $w_3$ attains the maximum. With the same argument as above we obtain that $w_3$ has $\zeta_3$ and $\bar{\zeta}_3$ as triple eigenvalues. Therefore we proved that up to conjugation (and sign) there exists only one order three isometry without fixed points on $E_6$.
    \end{ex}
 \section{Dimension of moduli spaces}\label{appendix} 
	In this section we compute the dimensions of the moduli spaces of families which are complete intersections in $\mathbb{P}^4$ of a quadric hypersurface of rank 3 or 4 and a cubic threefold.\newline 
    
    Using the generalized Morse lemma and the Recognition principle as done e.g. in \cite{Heckel} one can arrive to a generic form for the families we are interested in and count the free parameters. But the computations are long so we will use the following result which is a direct application of the Generalised Morse Lemma (see \cite[I, Theorem 2.47]{GLS} for a possible reference) and the Recognition Principle \cite[Lemma 1]{BruceWall}. As this theorem appears on a PhD dissertation which has not been published at the day we are writing this article we include here its proof.
	\begin{thm}[\protect{\cite[Theorem~1.15]{Heckel}}]\label{thmHeckel}
		Let $Y\subset \mathbb{A}_{\mathbb{C}}^n$ be a hypersurface defined by a polynomial $P\in \mathbb{C}[x_1, \dots, x_n]$
		and assume that the origin is an isolated singular point of $Y$ of corank one. Then, there exist polynomials $C_1, \dots, C_{k+1}$ in the coefficients of $P$ and depending on the choice of an analytic coordinate change such that the conditions\begin{equation*}
			C_1=\dots=C_k=0, \, C_{k+1}\neq 0
		\end{equation*}
		on the coefficients of $P$ are equivalent to $(Y, 0)$ being of type $A_k$. Moreover, each $C_i$ is homogeneous of degree $i-2$ and fixing the analytic coordinate change they depend on, there is an explicit algorithm computing them.
	\end{thm}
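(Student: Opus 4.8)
The plan is to make the two cited ingredients — the Generalised Morse (Splitting) Lemma \cite[I, Theorem~2.47]{GLS} and the Recognition Principle \cite[Lemma~1]{BruceWall} — completely explicit in the corank-one situation, and then to keep track of how the resulting invariants depend on the coefficients of $P$. First I would apply the Splitting Lemma: write the Taylor expansion $P=P_2+P_3+P_4+\cdots$ with $P_d$ homogeneous of degree $d$, so that ``isolated singular point of corank one'' says $P_2$ has rank $n-1$. Fixing once and for all an analytic change of coordinates that brings $P$ to the form $x_2^2+\cdots+x_n^2+(\text{terms of order}\geq 3)$, the $(n-1)\times(n-1)$ block of the Hessian in $x_2,\dots,x_n$ is invertible, so by the analytic implicit function theorem the system $\partial P/\partial x_i=0$, $i=2,\dots,n$, has a unique solution $x_i=\varphi_i(x_1)$ with each $\varphi_i$ a convergent power series vanishing to order $\geq 2$, and $P$ becomes right-equivalent to $g(x_1)+x_2^2+\cdots+x_n^2$ with $g(x_1):=P(x_1,\varphi_2(x_1),\dots,\varphi_n(x_1))$ vanishing to order $\geq 3$. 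Expanding $g(x_1)=\sum_{i\geq 1}a_i x_1^{\,i}$, I set $C_i:=a_i$; by construction $C_1=C_2=0$, because in these coordinates the $1$-jet and the $x_1^2$-part of $P$ vanish, and for $i\geq 3$ each $C_i$ is a universal polynomial in the coefficients of $P_2,\dots,P_i$ for the chosen coordinate change, produced by the obvious recursion that computes $\varphi_2,\dots,\varphi_n$ order by order and substitutes them into $P$ — this is the ``explicit algorithm'' in the statement. Since the singularity is isolated, $g\not\equiv 0$, so some $C_i$ is nonzero and the relevant index $k$ is finite.

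Next I would invoke the Recognition Principle. Over $\mathbb{C}$ a one-variable germ vanishing to order exactly $k+1$ equals $x_1^{k+1}u(x_1)$ with $u$ a unit, hence is right-equivalent to $x_1^{k+1}$ via the substitution $x_1\mapsto x_1\,u(x_1)^{1/(k+1)}$, which is admissible since $u$ has a $(k+1)$-th root as a unit power series over $\mathbb{C}$; so $g(x_1)+x_2^2+\cdots+x_n^2$ is right-equivalent to the normal form $x_1^{k+1}+x_2^2+\cdots+x_n^2$ of the $A_k$ singularity exactly when $\operatorname{ord}(g)=k+1$. As the right-equivalence class of the residual one-variable germ $g$, and in particular $\operatorname{ord}(g)$, is an invariant of $P$ that does not depend on the choices made in the Splitting Lemma, I conclude that $(Y,0)$ is of type $A_k$ if and only if $\operatorname{ord}(g)=k+1$, that is, if and only if $a_3=\cdots=a_k=0$ and $a_{k+1}\neq 0$, that is (using $C_1=C_2\equiv 0$) if and only if $C_1=\cdots=C_k=0$ and $C_{k+1}\neq 0$. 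Note that corank one forces $k\geq 2$, so the node is excluded.

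For the homogeneity assertion I would grade the ring generated by the coefficients of $P$ by giving weight $d-2$ to every coefficient of $P_d$, i.e. the grading induced by the Morse filtration in which $x_1$ has weight $1$. A short induction on the order in $x_1$ shows that in the recursion for $\varphi_i$ the coefficient of $x_1^{\,m}$ is homogeneous of weight $m-2$, and then, after substitution into $P$, that the coefficient $a_i$ of $x_1^{\,i}$ in $g$ is homogeneous of weight $i-2$; the induction closes because the partial derivatives $\partial/\partial x_i$, products of power series, and composition of power series all respect this grading.

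The part that needs care — rather than any hard computation — is the precise meaning of ``polynomials in the coefficients of $P$ depending on the choice of an analytic coordinate change''. Before that coordinate change is pinned down the $\varphi_i$, and hence the $C_i$, are merely convergent power series in the coefficients of $P$ (and already diagonalising $P_2$ introduces irrational linear data). The observation to be made is that, with the coordinate change fixed, each individual $C_i$ involves only finitely many of its Taylor coefficients, so that $C_i$ becomes a genuine polynomial in the coefficients of $P$; and that the resulting determination of the type $A_k$ does not depend on which coordinate change was pinned down, precisely because $\operatorname{ord}(g)$ is an intrinsic invariant of $(Y,0)$. Assembling these three points gives the theorem.
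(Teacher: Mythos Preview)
Your argument is correct and follows the same two-step strategy as the paper (Generalised Morse Lemma, then Recognition Principle), with one organisational difference: the paper stops at the partial normal form $P = x_1^2 + \cdots + x_{n-1}^2 + \sum_{i=3}^{k+1} C_i\,x_n^{\,i} + \sum_{i<n} x_i Q_i$ and invokes the Bruce--Wall principle with the quasi-homogeneous weight $\alpha(A_k) = (\tfrac12,\dots,\tfrac12,\tfrac{1}{k+1})$ to discard the mixed tail, whereas you push the Splitting Lemma all the way to a one-variable residual $g(x_1)$ via the implicit function theorem and read off $A_k$ directly from $\operatorname{ord}(g)$. Your route makes the recursion for the $C_i$ and the homogeneity claim explicit, both of which the paper's proof leaves unsaid; the paper's route is terser but leans more on the cited reference. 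One small slip in your bookkeeping: the coefficient of $x_1^{\,m}$ in $\varphi_j$ is homogeneous of weight $m-1$, not $m-2$ (check $m=2$: it is essentially the coefficient of $x_1^2 x_j$ in $P_3$, which has weight $3-2=1$); with that correction the induction closes and $a_i$ is indeed homogeneous of weight $i-2$.
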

	\begin{proof}
		Let $k\in\mathbb{N}$. Using the generalized Morse lemma we suppose that, after a suitable analytic coordinates change, $P$ has the form:
		\begin{equation*}
			P(x)=x_1^2+\dots +x_{n-1}^2+P_3(x_n)+\dots +P_{k+1}(x_n)+\sum_{i=1}^{n-1}x_iQ_i(x_1, \dots, x_n)
		\end{equation*}
		where each $P_i$ is a polynomial of degree $i$ and each $Q_i$ of degree $k$. In order to apply the recognition Principle we take the weight $\alpha(A_k)=\left(\frac{1}{2}, \dots, \frac{1}{2}, \frac{1}{k+1}\right)$ and note that the terms of degree $\alpha(A_k)<1$ are $P_3(x_n)+\dots +P_{k}(x_n)$, the terms of degree $\alpha(A_k)=1$ are $x_1^2+\dots +x_{n-1}^2+P_{k+1}(x_n)$ and the terms of degree $\alpha(A_k)>1$ are $\sum_{i=1}^{n-1}x_iQ_i(x_1, \dots, x_n)$. Therefore we write $C_ix_n=P_i(x_n)$ and conclude using the recognition Principle.
	\end{proof}
	This will lead us to prove the following proposition.
	\begin{prop}
		The dimension of the family $\mathcal{K}_{A_i}$ associated to the cubic threefold having one $A_i$ singularity (and thus of the subfamily of cubic threefolds having an isolated singularity of type $A_i$) for $i=1, \dots, 4$ is $10-i$. A generic element $\Sigma_{A_i}$ in $\mathcal{K}_{A_i}$ is such that $\text{rk}(\Pic(\Sigma_{A_i}))=2i$
	\end{prop}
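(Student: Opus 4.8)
The plan is to reduce the statement to a parameter count for the pair $(f_2,f_3)$ and then to match that count against a period domain.

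\emph{Set-up and the case $i=1$.} Following Section~\ref{Sec:Nodal}, a cubic threefold $C$ with an isolated singularity at $p_0=(1:0:\dots:0)$ is cut out by $x_0f_2+f_3$; the analytic type of $(C,p_0)$ is that of the surface germ $\{f_2+f_3=0\}\subset\mathbb{A}^4_{\mathbb{C}}$ at the origin, and the surface $\Sigma$ attached to $C$ is cut in $\{x_0=0\}$ by the same data. Hence $\mathcal{K}_{A_i}$ and $\Delta_3^{A_i}$ are parametrized by the same pairs $(f_2,f_3)$ and have the same dimension, so it is enough to compute $\dim\Delta_3^{A_i}$. The cubics singular at $p_0$ form a linear system $\mathbb{P}\bigl(\mathrm{Sym}^2(\mathbb{C}^4)^{\vee}\oplus\mathrm{Sym}^3(\mathbb{C}^4)^{\vee}\bigr)$ of dimension $10+20-1=29$; quotienting by the stabilizer of $p_0$ in $\mathrm{PGL}_5$, which has dimension $20$ and finite generic stabilizer, and using that a cubic with one singular point recovers that point, identifies this quotient with $\Delta_3^{A_1}$ once $\mathrm{rk}(f_2)=4$. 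Thus $\dim\mathcal{K}_{A_1}=\dim\Delta_3^{A_1}=9=10-1$, and $\mathrm{rk}\,\Pic$ of the generic (smooth) $\Sigma_{A_1}$ is the rank of the invariant lattice $U(3)$ of its non-symplectic automorphism, i.e. $2=2\cdot 1$, recovering the $K3$ classification of \cite{artebanisarti}.

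\emph{Dimension for $i\ge 2$.} The germ $f_2+f_3$ has corank one precisely when $\mathrm{rk}(f_2)=3$, one determinantal condition, and under it Theorem~\ref{thmHeckel} rewrites ``$(C,p_0)$ is of type $A_k$'' as the vanishing of the explicit polynomials $C_3,\dots,C_k$ produced by the splitting lemma, together with $C_{k+1}\ne 0$. So type $A_i$ imposes $i-1$ conditions beyond ``$C$ singular at $p_0$'', giving $\dim\mathcal{K}_{A_i}=\dim\Delta_3^{A_i}=10-i$ once one knows these conditions are independent. The latter — that $C_3,\dots,C_i$ is a regular sequence modulo $\det f_2$ — is the genuinely non-formal point, and I expect it to be the main obstacle; it can be read off the explicit normal forms obtained from the generalized Morse lemma, or from the algorithm of Theorem~\ref{thmHeckel}. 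Alternatively one argues by the chain $\Delta_3^{A_1}\supseteq\Delta_3^{A_2}\supseteq\Delta_3^{A_3}\supseteq\Delta_3^{A_4}$: each locus is nonempty (indeed GIT-stable by \cite{AllcockSing}), irreducible, and cut from the previous one by a single equation, so each inclusion is strict and lowers the dimension by exactly one, starting from $\dim\Delta_3^{A_1}=9$.

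\emph{Picard rank.} For the generic $\Sigma_{A_i}\in\mathcal{K}_{A_i}$ the minimal resolution $\hat\Sigma_{A_i}$ is a $K3$ surface with a non-symplectic automorphism $\hat\tau$ of order three. The hyperplane class, the exceptional curves of $\hat\Sigma_{A_i}\to\Sigma_{A_i}$ and the invariant lattice of $\hat\tau$ (always contained in $\Pic$) span the rank-$2i$ lattice $L_i$ recorded in Sections~\ref{sec:A1},~\ref{sec:A3A4},~\ref{sec: A2} — namely $U(3)$, $U\oplus A_2(-2)$, $U\oplus A_2(-1)^{\oplus 2}$, $U\oplus E_6(-1)$ for $i=1,2,3,4$ — whence $\mathrm{rk}\,\Pic(\hat\Sigma_{A_i})\ge 2i$. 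For the reverse inequality I would compare with a period domain. Put $S_i:=L_i^{\perp}$ in the $K3$ lattice; since $L_i$ contains the invariant lattice of $\hat\tau$ and is $\hat\tau^*$-invariant, $S_i$ lies in the anti-invariant lattice of $\hat\tau$ and $\hat\tau^*$ acts on $S_i$ (of rank $22-2i$) with only the eigenvalues $\zeta_3,\bar\zeta_3$; hence its $\zeta_3$-eigenspace $(S_i)_{\zeta}$ has dimension $11-i$ and $\Omega_{L_i}:=\{[\omega]\in\mathbb{P}((S_i)_{\zeta})\mid(\omega,\bar\omega)>0\}$ has dimension $10-i$. By \cite{ACT} the period map is defined on a dense open of $\mathcal{K}_{A_i}$, has finite fibres, and takes values in $\Omega_{L_i}$; since $\dim\mathcal{K}_{A_i}=10-i=\dim\Omega_{L_i}$ by the previous step, it is dominant, so a very general $\hat\Sigma_{A_i}$ has the period of a very general $L_i$-lattice polarized $K3$ surface, whose Picard lattice is exactly $L_i$ by Noether--Lefschetz. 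Together with the lower bound this gives $\mathrm{rk}\,\Pic(\hat\Sigma_{A_i})=2i$, as claimed.
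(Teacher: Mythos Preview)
Your proof is correct and follows the same overall strategy as the paper: a parameter count for the base case, then the corank-one condition together with the $C_j$ conditions from Theorem~\ref{thmHeckel} to drop the dimension by $i-1$, and finally the Picard rank read off from the dimension via the period map. The only differences are cosmetic --- you count parameters on the cubic-threefold side (cubics in $\mathbb{P}^4$ singular at $p_0$, modulo the $20$-dimensional stabilizer of $p_0$ in $\mathrm{PGL}_5$) while the paper counts on the $(2,3)$-complete-intersection $K3$ side, and you spell out the period-domain dominance and Noether--Lefschetz argument for $\mathrm{rk}\,\Pic=2i$ where the paper simply invokes the formula $22-2(\dim+1)$ from \cite[Section~9]{ArtebaniSartiTaki}; your explicit flagging of the independence of the $C_j$ is a point the paper itself glosses over.
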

	\begin{proof}
		We start from the most general case which is the $A_1$ case. Note that this is the only corank 0 case so the theorem does not apply in this case. The equations are given by \begin{equation}
			\begin{cases}
				f_2(x_1,x_2,x_3,x_4)=0 \\
				f_3(x_1,x_2,x_3,x_4)+ax_5^3=0.
			\end{cases}
		\end{equation}
		So we have $\binom{3+2}{2}=10$ parameters for the quadric and $\binom{3+3}{3}+1=21$ for the cubic. Then we have to impose $4$ conditions because if two cubic hypersurfaces differ by a multiple of the quadric they yield the same intersection. As every equation is defined up to a constant the parameters are $10+21-4-1-1=25$. Then we have to consider the projective transformations which preserve the family, as projectivities are up to a constant are $4\cdot4+1-1=16$. Finally, the dimension of this family is $25-16=9$. Now, we consider the family $\mathcal{K}_{A_i}$ associated to the cubic threefold having one $A_i$ singularity for $i\geq 2$. In these cases we have the same parameters and projective transformations as before but we need to add $1$ condition for being a corank 1 singularity (this is equivalent to ask that $f_2=0$ has rank 3 as a quadric) and $i-2$ conditions coming from \Cref{thmHeckel}. Therefore the dimension of the moduli space of the family of $(2,3)$-complete intersections in $\mathbb{P}^4$ associated to a cubic threefold having a singularity of type $A_i$ is $10-i$. Then if we take a generic element $\Sigma_{A_i}$ in $\mathcal{K}_{A_i}$ then by \cite[Section 9]{ArtebaniSartiTaki} we obtain $\text{rk}(\Pic(\Sigma_{A_i}))=22-2(10-i+1)=2i$.
	\end{proof}	
 \section{An easy exercise}\label{esercizio}
 Here we outline the execution of the exercise mentioned in \Cref{remarkone Stevell}.\newline

 Let $L$ be the $K3^{[2]}$ lattice. Then $D_L\simeq \mathbb{Z}/2\mathbb{Z}$ with finite quadratic form $q_L=\langle \frac{3}{2}\rangle$. Moreover let $T=U(3)\oplus\langle -2\rangle$ and $M=U\oplus A_2(2)\oplus\langle -2\rangle$. Clearly, $q_T\simeq q_L\oplus q_{U(3)}$ and $q_M\simeq q_L\oplus q_{A_2(2)}$, so given \cite[Proposition 1.15.1]{Nik1980}, recalled in \Cref{teonik}, the only possibilities for the respective orthogonal complements for $T$ in $L$ are the following:
 \begin{itemize}
     \item the genus of the lattice with signature $(2, 18)$ and discriminant form $q_T(-1)\oplus q_L$ is non-empty. Using the notation of Conway--Sloane (\cite{ConwaySloane}) this is $II_{(2, 18)}2_I^{+2}3^{-2}$. There exists only one class of isomorphism represented by \begin{equation*}
         U\oplus U(3)\oplus E_7\oplus E_8 \oplus \langle -2\rangle.
     \end{equation*}
     \item the genus of the lattice with signature $(2, 18)$ and discriminant form $q_{U(3)}(-1)$ is non-empty. Using the notation of Conway--Sloane (\cite{ConwaySloane}) this is $II_{(2, 18)}3^{-2}$. There exists only one class of isomorphism represented by \begin{equation*}
         U\oplus U(3)\oplus E_8^{\oplus 2}.
     \end{equation*}
 \end{itemize}
 Analogously for $M$:
 \begin{itemize}
     \item the genus of the lattice with signature $(2, 16)$ and discriminant form $q_M(-1)\oplus q_L$ is non-empty. Using the notation of Conway--Sloane (\cite{ConwaySloane}) this is $II_{(2, 16)}2_I^{-4}3^{+1}$. There exists only one class of isomorphism represented by \begin{equation*}
         U^{\oplus 2}\oplus E_8 \oplus D_4\oplus \langle -6\rangle\oplus \langle -2\rangle.
     \end{equation*}
     \item the genus of the lattice with signature $(2, 16)$ and discriminant form $q_{A_2(2)}(-1)$ is non-empty. Using the notation of Conway--Sloane (\cite{ConwaySloane}) this is $II_{(2, 16)}2_{II}^{-2}3^{+1}$. There exists only one class of isomorphism represented by \begin{equation*}
         U^{\oplus 2}\oplus E_8 \oplus D_4\oplus A_2.
     \end{equation*}
 \end{itemize}

\bibliographystyle{alpha}
\bibliography{bibliographydoct}

\end{document}